\date{}
\title{Graph classes with linear Ramsey numbers\thanks{Some results presented in this paper appeared in the extended abstract 
\cite{IWOCA2018} published in the proceedings of the 29th International Workshop on Combinatorial Algorithms, IWOCA 2018.}} 
\author{Bogdan Alecu\thanks{Mathematics Institute, University of Warwick, Coventry, CV4 7AL, UK. B.Alecu@warwick.ac.uk} \and
Aistis Atminas\thanks{Department of Mathematical Sciences, Xi'an Jiaotong-Liverpool University, 111 Ren'ai Road, Suzhou 215123, China. Email: Aistis.Atminas@xjtlu.edu.cn} \and 
Vadim Lozin\thanks{Mathematics Institute, University of Warwick, Coventry, CV4 7AL, UK. V.Lozin@warwick.ac.uk} \and 
Viktor Zamaraev\thanks{Department of Computer Science, University of Liverpool, Ashton Building, Ashton Street, Liverpool, 
L69 3BX, UK. Email: Viktor.Zamaraev@liverpool.ac.uk}
}
\tikzstyle{vertex}=[circle,fill=black!100,text=white,inner sep=0.8mm]
\tikzstyle{point}=[circle,fill=black,inner sep=0.1mm]
\begin{document}
\maketitle

\newtheorem{proposition}{Proposition}
\newtheorem*{example}{Example}
\newtheorem{theorem}{Theorem}
\newtheorem{lemma}{Lemma}
\newtheorem{cor}{Corollary}
\newtheorem{claim}{Claim}
\theoremstyle{definition}
\newtheorem{definition}{Definition}
\newtheorem{remark}{Remark}
\newtheorem{conjecture}{Conjecture}
\newtheorem{question}{Question}
\newtheorem{notation}{Notation}
\newtheorem{statement}{Statement}
\newtheorem{observation}{Observation}

\begin{abstract}
The Ramsey number $R_X(p,q)$ for a class of graphs $X$ is the minimum $n$ such that
every graph in $X$ with at least $n$ vertices has either a clique of size $p$ or
an independent set of size $q$. We say that
Ramsey numbers are {\it linear} in $X$ if there is a constant $k$ such that $R_{X}(p,q) \leq k(p+q)$ for all $p,q$.
In the present paper we conjecture that if $X$ is a hereditary class defined by finitely many forbidden induced subgraphs, 
then Ramsey numbers are linear in $X$ if and only if $X$ excludes a forest, a disjoint union of cliques and their complements. We prove the ``only if'' part of this conjecture and verify the ``if'' part for  a variety of 
classes. We also apply the notion of linearity to bipartite Ramsey numbers and reveal a number of similarities and differences 
between the bipartite and non-bipartite case. 
\end{abstract}

\section{Introduction}

According to Ramsey's Theorem \cite{Ramsey}, for all natural $p$ and $q$
there exists a minimum number $R(p,q)$ such that every graph with at least $R(p,q)$ vertices has either a clique of size $p$ or an independent set of size $q$.

The exact values of Ramsey numbers are known only for small values of $p$ and $q$. 
However, with the restriction to 
specific classes of graphs, Ramsey numbers can be determined for all $p$ and $q$. 
In particular, in \cite{planar} this problem was solved for planar graphs, while in \cite{first}
it was solved for line graphs, bipartite graphs, perfect graphs, $P_4$-free graphs and 
some other classes. 

These studies reveal, in particular, that different classes have different rates of growth of Ramsey numbers.
In the present paper, we denote the Ramsey numbers restricted to a class $X$ by $R_X(p,q)$ and focus 
on classes with a smallest speed of growth of $R_X(p,q)$. Clearly, $R_X(p,q)$ cannot be smaller than 
the minimum of $p$ and $q$. We say that Ramsey numbers are {\it linear} in $X$ if there is a constant $k$ 
such that $R_{X}(p,q) \leq k(p+q)$ for all $p,q$.

All classes in this paper are {\it hereditary}, i.e., closed under taking induced subgraphs.
It is well known that a class of graphs is hereditary if and only if it can be characterized 
in terms of minimal forbidden induced subgraphs. If the number of  minimal forbidden induced subgraphs
for a class $X$ is finite, we say that $X$ is {\it finitely defined}.

It is not difficult to see that all classes of bounded co-chromatic number have linear Ramsey numbers, where 
the co-chromatic number of a graph $G$ is the minimum $k$ such that the vertex set of $G$ can be partitioned 
into $k$ subsets each of which is either a clique or an independent set. Unfortunately, as we show in Section \ref{sec:ccn}, 
this is not an if and only if statement in general. We conjecture, however, that in the universe of finitely defined classes the two notions coincide.

\begin{conjecture}\label{con:1}
A finitely defined hereditary class is of linear Ramsey numbers if and only if it is of bounded co-chromatic number.  
\end{conjecture}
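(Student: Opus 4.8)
The ``if'' direction is the easy half. A convenient reformulation is that $R_X$ is linear if and only if there is a constant $c$ with $\max\{\omega(G),\alpha(G)\}\ge |V(G)|/c$ for every $G\in X$: in one direction take $n=c(p+q)$ and note that a homogeneous set on $p+q$ vertices contains a clique of size $p$ or an independent set of size $q$, and for the converse apply linearity with $p=q=\lfloor n/(2k)\rfloor$. Since a partition of $G$ into at most $k$ cliques and independent sets always has a part on at least $|V(G)|/k$ vertices, bounded co-chromatic number implies linear Ramsey numbers. So the whole content is the ``only if'' direction, which I would attack contrapositively: show that a finitely defined class $X$ of unbounded co-chromatic number contains, for infinitely many $n$, graphs on $n$ vertices with $\max\{\omega,\alpha\}=o(n)$.

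The plan is a structural dichotomy. Call $X$ \emph{rich} if it contains every forest, or every disjoint union of cliques, or every complete multipartite graph; because $X$ is finitely defined, $X$ is rich precisely when its list of forbidden induced subgraphs contains no forest, or no disjoint union of cliques, or no complete multipartite graph. The crucial step would be to prove that a finitely defined class of unbounded co-chromatic number is necessarily rich --- equivalently, that if the forbidden list of $X$ simultaneously contains a forest, a disjoint union of cliques, and a complete multipartite graph, then $X$ has bounded co-chromatic number. Here the forbidden disjoint union of cliques should be used to bound the clique-cover number of graphs in $X$, the forbidden complete multipartite graph (dually) their independent-set-cover number, and the forbidden forest to control how these coverings interact along long induced paths; fusing the three into a single bound on the co-chromatic number is, I expect, the genuine obstacle, and is presumably why the statement remains a conjecture.

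Granting richness, the remaining lower bounds are concrete. If $X$ contains every disjoint union of cliques then $tK_t$ lies in $X$, has $t^2$ vertices, with clique number and independence number both equal to $t$, so $R_X(t+1,t+1)>t^2$ and Ramsey numbers in $X$ are at least quadratic; the complete multipartite case is the complement of this, using $\overline{tK_t}$. If $X$ contains every forest then, since $X$ is finitely defined, it in fact contains every graph whose girth exceeds the largest order $c$ of a forbidden induced subgraph (every induced subgraph of such a graph on at most $c$ vertices is a forest, whereas no forbidden subgraph of $X$ is a forest), and Erd\H{o}s's probabilistic construction --- take $G(n,p)$ with a suitable edge probability and delete a vertex from each cycle of length at most $c$ --- then yields, for every $q$, a triangle-free graph of girth $>c$ with independence number $<q$ on a number of vertices superlinear in $q$, whence $R_X(3,q)$ is superlinear. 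Thus the only missing ingredient is the dichotomy of the previous paragraph; everything else is routine.
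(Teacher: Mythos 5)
The statement you are addressing is stated in the paper only as a conjecture: the paper proves neither direction in full, recording just the easy ``if'' half (bounded co-chromatic number implies linear Ramsey numbers, which your first paragraph reproves correctly via the homogeneous-subgraph reformulation of Proposition~\ref{LLRN}) together with the ``only if'' part of the companion Conjecture~\ref{con:2}. Your proposal does not close the gap, and the central problem is your structural dichotomy. It is missing an obstruction and is false as stated: besides forests, disjoint unions of cliques and complete multipartite graphs, one must also account for classes containing every \emph{complement} of a forest. Concretely, let $X$ be the class of $(\overline{C}_3,\ldots,\overline{C}_k)$-free graphs, i.e., complements of graphs of girth greater than $k$. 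Its (finite) forbidden list contains $\overline{C}_3=3K_1$, which is simultaneously a forest, a disjoint union of cliques, and a complete multipartite graph, so $X$ is not ``rich'' in your sense and your crucial step would declare its co-chromatic number bounded. But $X$ contains the complements of the triangle-free, large-girth graphs with independence number $O(n^{1-\epsilon}\log n)$ from Lemma~\ref{lem:E}; these complements have independence number at most $2$ and clique number $O(n^{1-\epsilon}\log n)$, hence co-chromatic number $\Omega(n^{\epsilon}/\log n)$ and non-linear Ramsey numbers. So ``unbounded co-chromatic number implies rich'' fails.

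Even after you add the fourth obstruction, the step you describe as ``the genuine obstacle'' --- that a finitely defined class whose forbidden list contains a forest, a co-forest, a $P_3$-free graph and a co-$P_3$-free graph has bounded co-chromatic number --- is precisely the conjecture of Chudnovsky and Seymour cited in the paper, which those authors show is \emph{equivalent} to the Gy\'arf\'as--Sumner conjecture; it is a long-standing open problem, not a technical fusion lemma. Your lower-bound constructions in the rich cases are sound and essentially reproduce the paper's Theorem~\ref{thm:onlyif} (the graphs $(q-1)K_{p-1}$ play the role of your $tK_t$, and the girth argument is Lemma~\ref{lem:E}), but the overall scheme then amounts to deriving Conjecture~\ref{con:1} from Theorem~\ref{thm:onlyif} plus the Chudnovsky--Seymour conjecture, which is exactly the reduction the paper already records by juxtaposing Conjectures~\ref{con:1} and~\ref{con:2}. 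In short, the proposal restates the open problem rather than resolving it.
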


In \cite{cs}, it was conjectured that a finitely defined class $X$ has bounded co-chromatic number if and only if 
the set of minimal forbidden induced subgraphs for $X$ contains a $P_3$-free graph, the complement of a $P_3$-free graph,
a forest (i.e., a graph without cycles) and the complement of a forest. The authors of \cite{cs} go on to show that their conjecture is equivalent to the older Gy\'{a}rf\'{a}s-Sumner conjecture \cite{chi, sumner}. Naturally, if this conjecture is true, we expect, following Conjecture~\ref{con:1}: 

\begin{conjecture}\label{con:2}
	A finitely defined  class $X$ is of linear Ramsey numbers if and only if the set of minimal forbidden induced subgraphs for $X$ contains a $P_3$-free graph, the complement of a $P_3$-free graph,
	a forest and the complement of a forest.  
\end{conjecture}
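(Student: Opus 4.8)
\medskip
\noindent\textbf{Proof plan.} We would prove the two directions of Conjecture~\ref{con:2} separately, expecting only the ``only if'' part to be provable in full generality. For that part the plan is to argue by contraposition: writing $\mathcal M$ for the finite set of minimal forbidden induced subgraphs of $X$, we assume that $\mathcal M$ contains no graph of one of the four prescribed kinds, and in each of the four resulting cases we exhibit graphs in $X$ showing that $R_X(p,q)$ outgrows every linear function of $p+q$. (One may assume $\mathcal M\neq\emptyset$, since otherwise $X$ is the class of all graphs.) Two of the cases are elementary; two require a genuine construction.

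\emph{The elementary cases.} If $\mathcal M$ contains no $P_3$-free graph, then every $P_3$-free graph --- equivalently, every disjoint union of cliques --- lies in $X$, since an induced subgraph of a $P_3$-free graph is again $P_3$-free while no member of $\mathcal M$ is. In particular the disjoint union of $q-1$ cliques of size $p-1$ lies in $X$; it has clique number $p-1$ and independence number $q-1$, so $R_X(p,q)\ge (p-1)(q-1)+1$, which is not $O(p+q)$. Dually, if $\mathcal M$ contains no complement of a $P_3$-free graph then all complete multipartite graphs lie in $X$, and the complete $(p-1)$-partite graph with all parts of size $q-1$ gives the same quadratic lower bound.

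\emph{The substantial cases.} Suppose $\mathcal M$ contains no forest, and put $N=\max_{H\in\mathcal M}|V(H)|$; since some $H\in\mathcal M$ is not a forest it contains a cycle, so $N\ge 3$. Every $H\in\mathcal M$ contains a cycle of length at most $N$, hence any graph of girth greater than $N$ contains no $H\in\mathcal M$ even as a (not necessarily induced) subgraph, and therefore belongs to $X$. It thus suffices to show that the class of graphs of girth exceeding $N$ has no linear Ramsey numbers, and here we would take $p=3$: such graphs are triangle-free, so the point is that forcing an independent set of size $q$ inside a graph of bounded girth requires a number of vertices superlinear in $q$. Quantitatively, $R_X(3,q)$ is at least the largest order of a graph of girth $>N$ with independence number less than $q$; Erd\H{o}s's probabilistic construction of graphs of high girth and high chromatic number (quantified so as to bound the independence number), or the analysis of the random short-cycle-free process, or suitable algebraic graphs, all show this is superlinear in $q$. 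Finally, if $\mathcal M$ contains no complement of a forest, we apply the previous paragraph to the complementary class $\overline{X}=\{\overline{G}:G\in X\}$, whose forbidden graphs are the complements of those in $\mathcal M$ and so contain no forest, and use $R_X(p,q)=R_{\overline{X}}(q,p)$.

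\emph{The ``if'' direction, and the main obstacle.} Here we do not expect an unconditional proof. Since a $P_3$-free graph is an induced subgraph of some $aK_b$ (a disjoint union of $a$ cliques of size $b$) and its complement an induced subgraph of $\overline{aK_b}$, excluding all four kinds of graph places $X$ inside the class of $\{F_1,\overline{F_2},aK_b,\overline{aK_b}\}$-free graphs for suitable forests $F_1,F_2$ and suitable $a,b$; and for any class of bounded co-chromatic number linearity of $R_X$ is immediate, because a graph on $\ge k\max(p,q)$ vertices with a $k$-part co-colouring has a part of size $\ge\max(p,q)$, hence a clique of size $\ge p$ or an independent set of size $\ge q$. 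Thus the ``if'' direction reduces to bounding the co-chromatic number of $\{F_1,\overline{F_2},aK_b,\overline{aK_b}\}$-free graphs, which is exactly the ``if'' direction of the conjecture of \cite{cs} and was shown there to be equivalent to the Gy\'{a}rf\'{a}s--Sumner conjecture. The realistic goal is therefore to verify it for concrete families --- those for which a structural decomposition is available, combining a Ramsey-type ``cleaning'' step that strips off a bounded part with an already-established case of Gy\'{a}rf\'{a}s--Sumner (paths, stars, brooms, and a handful of other trees). This ``if'' direction is the main obstacle; on the ``only if'' side the sole non-elementary input is the superlinear Ramsey lower bound for graphs of bounded girth, and the one point to verify carefully is that a finite forbidden family without a forest really does leave every sufficiently high-girth graph inside $X$.
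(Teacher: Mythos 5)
Your proposal matches the paper's treatment: the ``only if'' direction is proved exactly as in the paper's Theorem~\ref{thm:onlyif} (the $(q-1)K_{p-1}$ and complete multipartite constructions for the two elementary cases, and the reduction to graphs of bounded girth together with the Erd\H{o}s-type superlinear lower bound of Lemma~\ref{lem:E} for the other two), and like the paper you leave the ``if'' direction as conditional on bounding the co-chromatic number (the Chudnovsky--Seymour/Gy\'arf\'as--Sumner reduction), verifying it only for concrete families. Since the statement is a conjecture, this is precisely the extent of what the paper establishes, and your argument for the proved half is essentially identical to theirs.
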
 

In Section~\ref{sec:non}, we prove the ``only if'' part of Conjecture~\ref{con:2}.  In other words, in the universe of finitely defined classes, 
the property of a class $X$ having linear Ramsey numbers lies in between that of $X$ having bounded co-chromatic number and that of $X$ avoiding the specified induced subgraphs. 

In Section~\ref{sec:linear}, we focus on the ``if'' part of  Conjecture~\ref{con:2} and verify it for a variety of classes defined by small forbidden induced subgraphs. 
Moreover, for all the considered classes we derive exact values of the Ramsey numbers.

In Section~\ref{sec:bip}, we extend the notion of linearity to bipartite Ramsey numbers and show that some of the results obtained for non-bipartite numbers
can be extended to the bipartite case as well. However, in general, the situation with linear bipartite Ramsey numbers seems to be more complicated 
and we restrict ourselves to a weaker analog of Conjecture~\ref{con:2}, which is also verified for some classes of bipartite graphs. 
In the rest of the present section, we introduce basic terminology and notation.

All graphs in this paper are finite, undirected, without loops and multiple edges. The vertex set and the edge set of a graph $G$
are denoted by $V(G)$ and $E(G)$, respectively. For a vertex $x\in V(G)$ we denote by $N(x)$ the neighbourhood of $x$, i.e., the set of 
vertices of $G$ adjacent to $x$. The degree of $x$ is $|N(x)|$.
We say that $x$ is {\it complete} to a subset $U\subset V(G)$ if $U\subseteq N(x)$ and {\it anticomplete} to $U$ if $U\cap N(x)=\emptyset$.
A subgraph of $G$ induced by a subset of vertices $U\subseteq V(G)$ is denoted $G[U]$.
By $\overline{G}$ we denote the complement of $G$ and call it co-$G$.

A {\it clique} in a graph is a subset of pairwise adjacent vertices and an {\it independent set} is a subset of pairwise non-adjacent vertices. 
For a graph $G$, let $\alpha(G)$ denote the independence number of $G$, $\omega(G)$ the clique number, $\chi(G)$ the chromatic number and $z(G)$ the co-chromatic number.

By $K_n$, $C_n$ and $P_n$ we denote a complete graph, a chordless cycle and a chordless path with $n$ vertices, respectively.
Also, $K_{n,m}$ is a complete bipartite graph with parts of size $n$ and $m$, and $K_{1,n}$ is a star. 
A disjoint union of two graphs $G$ and $H$ is denoted $G+H$. In particular, $pG$ is a disjoint union of $p$ copies of $G$.

If a graph $G$ does not contain induced subgraphs isomorphic to a graph $H$, then we say that $G$ is $H$-free and call $H$ a forbidden induced subgraph for $G$.
In case of several forbidden induced subgraphs we list them in parentheses. 

A {\it bipartite graph} is a graph whose vertices can be partitioned into two independent sets, and a {\it split graph} is a graph whose vertices can be partitioned 
into an independent set and a clique. A graph is bipartite if and only if it is free of odd cycles, and a graph is a split graph if and only if it is $(C_4,2K_2,C_5)$-free \cite{split}.

\section{Linear Ramsey numbers and related notions}
\label{sec:ccn}

As we observed in the introduction, the notion of linear Ramsey numbers has ties with bounded co-chromatic number,
and we believe that in the universe of finitely defined classes, the two notions are equivalent. In the present section, 
we first show that this equivalence is not valid for general hereditary classes, and then discuss the relationship between 
linear Ramsey numbers and some other notions that appear in the literature.

In order to show that Conjecture~\ref{con:1} is not valid for general hereditary classes, we consider the Kneser graph $KG_{a,b}$:
it has as vertices the $b$-subsets of a set of size $a$, and two vertices are adjacent if and only if the corresponding subsets are disjoint. 
A well-known result due to Lov\'asz says that, if $a \geq 2b$, then the chromatic number $\chi(KG_{a, b})$ is $a - 2b + 2$ \cite{lovasz1978}. 

In the following theorem, we denote by $X$ the hereditary closure of the family of Kneser graphs $KG_{3n, n}, n \in \mathbb{N}$,  i.e., $X = \{H: H \text{ is an
induced subgraph of } KG_{3n, n},$ $\text{for some } n \in \mathbb N\}$. 
 
\begin{theorem}
The class $X$ has linear Ramsey numbers and unbounded co-chromatic number.
\end{theorem}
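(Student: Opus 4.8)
The plan is to prove two things: first, that every graph in $X$ has independence number at least a third of its order, which forces $R_X(p,q)\le 3q$ and hence linearity; second, that the Kneser graphs $KG_{3n,n}$ themselves already have unbounded co-chromatic number, using Lov\'asz's chromatic-number formula quoted above.

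For the first claim I would start from the fact that every $G\in X$ is an induced subgraph of some $KG_{3n,n}$, so its vertex set may be identified with a family $U$ of $n$-subsets of $[3n]=\{1,\dots,3n\}$; put $m=|U|=|V(G)|$. For each $x\in[3n]$, the set $I_x=\{A\in U: x\in A\}$ is an independent set of $G$, since any two of its members share $x$ and hence are non-disjoint, so non-adjacent in $KG_{3n,n}$. Counting incident pairs $(x,A)$ with $x\in A$ gives $\sum_{x=1}^{3n}|I_x|=\sum_{A\in U}|A|=nm$, so some $I_x$ has at least $m/3$ vertices; thus $\alpha(G)\ge |V(G)|/3$ for every $G\in X$. (Equivalently, the $3n$ sets $I_x$ form a fractional $3$-colouring, so $\chi_f(KG_{3n,n})\le 3$, and $\chi_f$ only decreases under taking subgraphs.) It follows that any $G\in X$ on at least $3q$ vertices contains an independent set of size $q$, so $R_X(p,q)\le 3q\le 3(p+q)$ for all $p,q$; Ramsey numbers in $X$ are linear with $k=3$.

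For the second claim, note that a clique of $KG_{3n,n}$ consists of pairwise disjoint $n$-subsets of $[3n]$ and so has size at most $3$; hence $\omega(KG_{3n,n})=3$, and the same bound holds for every induced subgraph. Now suppose $V(KG_{3n,n})$ is partitioned into $c$ cliques and $i$ independent sets with $c+i=z(KG_{3n,n})$. Assigning three private colours to each clique and one further colour to each independent set yields a proper colouring, so $\chi(KG_{3n,n})\le 3c+i\le 3(c+i)=3\,z(KG_{3n,n})$. Since $\chi(KG_{3n,n})=3n-2n+2=n+2$ by Lov\'asz's theorem, we get $z(KG_{3n,n})\ge (n+2)/3\to\infty$; as every $KG_{3n,n}$ belongs to $X$, the class has unbounded co-chromatic number.

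I do not expect a serious obstacle here. The only point needing a little care is the independence lemma: one must check that the sets $I_x$ really are independent in the induced subgraph and that the double counting delivers the linear lower bound on $\alpha$ uniformly over all of $X$, not just over the full Kneser graphs (for which the Erd\H{o}s--Ko--Rado theorem would already give the exact value $\binom{3n-1}{n-1}$). The co-chromatic bound is then just bookkeeping around $\omega\le 3$ and Lov\'asz's formula.
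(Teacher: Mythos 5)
Your proposal is correct and follows essentially the same route as the paper: the same double-counting of element--set incidences to show $\alpha(G)\ge|V(G)|/3$ for every $G\in X$ (hence linearity), and the same combination of $\omega\le 3$ with Lov\'asz's formula $\chi(KG_{3n,n})=n+2$ to force $z(KG_{3n,n})\ge(n+2)/3$. You merely make explicit the bookkeeping step $\chi\le 3z$ that the paper leaves implicit.
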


\begin{proof}
First, we note that by Lov\'asz's result stated above, it follows that $\chi(KG_{3n, n}) = 3n-2n+2=n+2$. Also, it is not hard to see that 
the the size of the biggest clique in $KG_{3n, n}$ is 3. It follows that the co-chromatic number of $KG_{3n, n}$ is at least $\frac{n+2}{3}$. As a result, 
the co-chromatic number is unbounded for this class.

Now consider any induced subgraph $H$ of $KG_{3n,n}$. We will show that $\alpha(H) \geq \frac{|V(H)|}{3}$. Indeed, the vertices of the Kneser graph in this case
are $n$-element subsets of $\{1, 2, \ldots, 3n\}$. For each $i \in \{1,2, \ldots, 3n\}$ let $V_i$ be the set of vertices of $H$ containing element $i$. 
Then, as each vertex is an $n$-element subset, it follows that $\sum_{n=1}^{3n} |V_i| = n \times |V(H)|$. Hence, by the Pigeonhole Principle, there is an $i$ such that 
$|V_i| \geq \frac{|V(H)|}{3}$. As $V_i$ is an independent set, it follows that $\alpha(H) \geq \frac{|V(H)|}{3}$. This implies that for any $H \in X$  we have $|V(H)| \leq 3 \alpha(H) \leq 3(\alpha(H)+\omega(H))$,
and hence the Ramsey numbers are linear in the class $X$.
\end{proof}

\medskip
We now turn to one more notion, which is closely related to the growth of Ramsey numbers. This is the notion of homogeneous subgraphs that appears 
in the study of the Erd\H{o}s-Hajnal conjecture \cite{EH-conjecture}. 
We will say that graphs in a class $X$ have \emph{linear homogeneous subgraphs} if there exists a constant $c = c(X)$ such that
$\max\{ \alpha(G), \omega(G) \} \geq c \cdot |V(G)|$ for every $G \in X$.

\begin{proposition}\label{LLRN}
	Let $X$ be a class of graphs. Then graphs in $X$  have linear homogeneous subgraphs if and only if Ramsey numbers are linear in $X$. More generally, for any $0 < \delta \leq 1$, the following two statements are equivalent:
	\begin{itemize}
		
		\item There is a constant $A$ such that $\max\{\alpha(G), \omega(G)\} \geq A \cdot |V(G)|^\delta$ for every $G \in X$.
		
		\item There is a constant $B$ such that $R_X(p, q) \leq B(p + q)^{\frac{1}{\delta}}$. 

	\end{itemize}
\end{proposition}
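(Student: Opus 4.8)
The plan is to notice first that the opening claim — graphs in $X$ have linear homogeneous subgraphs if and only if Ramsey numbers are linear in $X$ — is precisely the case $\delta = 1$ of the general equivalence (take $A = c$ in one direction and $B = k$ in the other). So it suffices to prove the two-way equivalence for arbitrary $0 < \delta \le 1$. Both implications are a mechanical translation between a lower bound on $\max\{\alpha(G),\omega(G)\}$ in terms of $|V(G)|^\delta$ and an upper bound on $R_X(p,q)$ in terms of $(p+q)^{1/\delta}$, with the exponents $\delta$ and $1/\delta$ cancelling; beyond the definition of $R_X$ (which already encodes Ramsey's theorem) there is no combinatorial content.

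For the implication ``first bullet $\Rightarrow$ second bullet'', I would assume $\max\{\alpha(G),\omega(G)\} \ge A\,|V(G)|^\delta$ for all $G\in X$, fix $p,q\ge 1$, and consider any $G\in X$ with $n := |V(G)| \ge \big((p+q)/A\big)^{1/\delta}$. Then $\max\{\alpha(G),\omega(G)\} \ge A n^\delta \ge p+q \ge \max\{p,q\}$, and since the larger of $\alpha(G),\omega(G)$ is at least $\max\{p,q\}$, we get $\omega(G)\ge p$ or $\alpha(G)\ge q$. Hence $R_X(p,q) \le \big\lceil \big((p+q)/A\big)^{1/\delta}\big\rceil \le A^{-1/\delta}(p+q)^{1/\delta} + 1 \le B\,(p+q)^{1/\delta}$ with $B := A^{-1/\delta}+1$, using $(p+q)^{1/\delta}\ge 1$ to absorb the additive $1$.

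For the converse, I would assume $R_X(p,q) \le B(p+q)^{1/\delta}$ for all $p,q$, take $G\in X$ with $n := |V(G)|\ge 1$, and set $s := \max\{\alpha(G),\omega(G)\}\ge 1$. By choice of $s$, the graph $G$ has no clique and no independent set of size $s+1$, so $n$ cannot be $\ge R_X(s+1,s+1)$; that is, $n < R_X(s+1,s+1) \le B\,(2s+2)^{1/\delta}$. Using $s\ge 1$ to bound $2s+2 \le 4s$, this gives $n < B\,(4s)^{1/\delta}$, hence $n^\delta < 4B^\delta s$, so $\max\{\alpha(G),\omega(G)\} = s \ge n^\delta/(4B^\delta)$, which is the first bullet with $A := 1/(4B^\delta)$.

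I expect the only thing needing care is the constant bookkeeping: the ceiling hidden in the definition of $R_X$ (absorbed using $p,q\ge 1$, so that $(p+q)^{1/\delta}\ge 1$) and the replacement of $s+1$ by a constant multiple of $s$, which requires $s\ge 1$ and hence the assumption that $G$ is nonempty — the empty graph, if it belongs to $X$, being handled trivially since then both sides are $0$. No step is a genuine obstacle.
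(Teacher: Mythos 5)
Your proposal is correct and follows essentially the same route as the paper: the forward direction is identical, and the converse is the same translation argument, with your direct use of $s=\max\{\alpha(G),\omega(G)\}$ and the bound $n<R_X(s+1,s+1)\le B(2s+2)^{1/\delta}\le B(4s)^{1/\delta}$ being a slightly cleaner packaging of the paper's ``largest integer $t$'' device (both land on $A=1/(4B^\delta)$). Your extra care with the ceiling and with the empty graph is harmless and, if anything, tidier than the original.
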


\begin{proof}
	
The second claim reduces to the first one when $\delta = 1$, so we just prove the stronger claim. 
	
For the first implication, suppose there exists a constant $A$ such that $\max\{ \alpha(G), \omega(G) \} \geq A \cdot |V(G)|^\delta$ for all $G \in X$. 
Let $H \in X$, let $p, q \in \mathbb{N}$, and suppose that $|V(H)| \geq \left(\frac{p + q}{A}\right)^{\frac{1}{\delta}}$. 
Then $\max\{ \alpha(H), \omega(H) \} \geq A \cdot |V(H)|^\delta \geq p+q$, which means that $H$ is guaranteed to have 
an independent set of size $p$ or a clique of size $q$, and this proves the first direction (we can put, e.g., $B = A^{-\frac{1}{\delta}}$) in the statement of the proposition.

Conversely, suppose there exists a positive constant $B$ such that for any $p, q \in \mathbb{N}$ and 
$G \in X$, if $|V(G)| \geq B(p + q)^{\frac{1}{\delta}}$, 
then $G$ has an independent set of size $p$ or a clique of size $q$. 
Let $H$ be an  arbitrary graph in $X$ and let $t$ be the largest integer such that $|V(H)| \geq 2^{\frac{1}{\delta}}Bt^{\frac{1}{\delta}} = B(t + t)^{\frac{1}{\delta}}$.
By the above assumption, $H$ has a clique or an independent set of size $t$, i.e., $\max\{\alpha(H), \omega(H)\} \geq t$.
Notice, by definition of $t$, we have $|V(H)| \leq 2^{\frac{1}{\delta}}B(t + 1)^{\frac{1}{\delta}}$, i.e., $|V(H)|^{\delta} \leq 2B^\delta(t + 1)$. Hence if $t=0$, then $|V(H)|^\delta \leq 2B^\delta$  and therefore 
$\max\{ \alpha(H), \omega(H) \} \geq \frac{|V(H)|^\delta}{2B^\delta} \geq \frac{|V(H)|^\delta}{4B^\delta}$.
On the other hand, if $t \geq 1$, then $|V(H)|^\delta \leq 2B^\delta(t+1) \leq 4B^\delta t$ and therefore 
$\max\{ \alpha(H), \omega(H) \} \geq \frac{|V(H)|^\delta}{4B^\delta}$, and putting, e.g., $A = \frac{1}{4B^\delta}$ concludes the proof. 	
\end{proof}

In particular, the Erd\H{o}s-Hajnal conjecture can be stated in our terminology as follows: 

\begin{conjecture}(Erd\H{o}s-Hajnal)
	Suppose $X$ is a proper hereditary class (that is, not the class of all graphs). Then there are constants $A, k$ such that $R_X(p, q) \leq A(p + q)^k$ for every $p, q \in \mathbb{N}$, i.e., Ramsey numbers grow at most polynomially in $X$. 
	
\end{conjecture}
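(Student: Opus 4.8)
By Proposition~\ref{LLRN}, applied with $\delta = 1/k$, the asserted bound $R_X(p,q) \le A(p+q)^k$ is equivalent to the existence of a constant $\varepsilon \in (0,1]$ with $\max\{\alpha(G), \omega(G)\} \ge |V(G)|^{\varepsilon}$ for every $G \in X$, which is the classical formulation of the Erd\H{o}s--Hajnal conjecture and has been open for decades. What follows is therefore not a proof but a description of the route I would take and of the point at which it stalls.

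The first move is to reduce to a single forbidden subgraph. If $X \subseteq Y$ and $Y$ has the property above, then so does $X$ with the same constant; and since $X$ is proper and hereditary, it contains no induced copy of some fixed graph $H$, so $X$ is contained in the class $\mathrm{Free}(H)$ of $H$-free graphs. Thus it suffices to prove that for every $H$ the class $\mathrm{Free}(H)$ has the Erd\H{o}s--Hajnal property. The second move is the substitution lemma of Alon, Pach and Solymosi: if $H_1$ and $H_2$ both have the Erd\H{o}s--Hajnal property, then so does every graph obtained by substituting $H_2$ for a vertex of $H_1$. Since $P_4$-free graphs are perfect and hence satisfy $\alpha(G)\,\omega(G) \ge |V(G)|$, so that $\varepsilon = \tfrac12$ works there, this already settles every graph built from single vertices by iterated substitution, and the difficulty is concentrated entirely in the \emph{prime} graphs, those admitting no non-trivial module.

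For a fixed prime graph $H$, the plan would be to prove a stronger-looking bipartite statement: that every $H$-free graph $G$ contains disjoint sets $A, B$ with $|A|, |B| \ge |V(G)|^{c}$ such that $A$ is complete or anticomplete to $B$. A standard self-improvement/iteration argument upgrades such homogeneous pairs to a single homogeneous set of polynomial size, so it would suffice to establish this bipartite version; this is essentially how the known cases, such as the bull and the five-cycle $C_5$, have been handled, via dedicated structural analysis of the corresponding $H$-free graphs.

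The main obstacle, and the reason this remains a conjecture rather than a theorem, is precisely that last step: no uniform way is known to produce large homogeneous pairs in $H$-free graphs for an arbitrary prime $H$, and the best unconditional bound for a general proper hereditary class is still sub-polynomial, of the shape $\max\{\alpha(G), \omega(G)\} \ge 2^{\Omega(\sqrt{\log |V(G)|})}$ from the original Erd\H{o}s--Hajnal argument, with later improvements that nonetheless fall short of any fixed power of $|V(G)|$. I would not expect the approach above to close that gap in general; the realistic outcome of this line of attack is to enlarge the list of prime graphs $H$ for which $\mathrm{Free}(H)$ is known to satisfy the property, after which the substitution lemma automatically extends the conjecture to all classes obtained from them.
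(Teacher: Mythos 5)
This statement is the Erd\H{o}s--Hajnal conjecture itself, which the paper records only as an open conjecture (rephrased in Ramsey-number language via Proposition~\ref{LLRN}) and does not prove, so there is no proof in the paper to compare against; you are right that no proof can be given here. Your translation via Proposition~\ref{LLRN} with $\delta = 1/k$ is exactly the equivalence the paper uses to state the conjecture, and your account of the standard line of attack (reduction to a single forbidden induced subgraph, the Alon--Pach--Solymosi substitution lemma, and the unresolved case of prime graphs, with only sub-polynomial bounds known in general) is accurate and appropriately does not claim more than is known.
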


\medskip
Finally, we point out the difference between the notion of Ramsey numbers {\it for  classes} and  the notion of Ramsey numbers {\it of graphs}. 
Each of them leads naturally to the notion of linear Ramsey numbers, defined differently in the present paper and, for instance, in \cite{GRR00}. 
In spite of the possible confusion, we use the terminology of Ramsey numbers, and not the terminology of homogeneous subgraphs, 
because most of our results deal with the exact value of $R_X(p,q)$.

\section{Classes with non-linear Ramsey numbers}
\label{sec:non}


In this section, we prove the ``only if'' part of Conjecture~\ref{con:2}. 

\begin{lemma}\label{lem:E}
	For every fixed $k \geq 3$, the class $X_k$ of $(C_3,C_4,\ldots,C_k)$-free graphs is not of linear Ramsey numbers.
\end{lemma}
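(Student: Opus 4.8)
The plan is to exhibit, for each $k$, graphs in $X_k$ that are large but have both small clique number and small independence number, violating linearity. Since $X_k$ excludes all cycles up to length $k$, any graph in $X_k$ has girth greater than $k$, so in particular it is triangle-free and thus $\omega(G) \le 2$. Hence to show $R_{X_k}(p,q)$ is not linear in $p+q$ it suffices, by Proposition~\ref{LLRN} (the $\delta=1$ equivalence with linear homogeneous subgraphs), to produce graphs of high girth on $n$ vertices whose independence number is $o(n)$ — indeed, as small as $n^{1-c}$ for some fixed $c>0$. Then $\max\{\alpha(G),\omega(G)\}$ cannot be bounded below by a constant times $|V(G)|$, so Ramsey numbers are not linear.

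The key tool will be the classical Erd\H{o}s probabilistic construction of graphs with high girth and small independence number (equivalently, high chromatic number). Concretely, I would recall that for every $k$ there exist, for infinitely many $n$, graphs $G$ on $n$ vertices with girth greater than $k$ and $\alpha(G) \le n^{1-1/k}$ (or any similar bound of the form $O(n^{1-\varepsilon})$; the precise exponent is immaterial). The standard argument takes $G(n,p)$ with $p = n^{1/k - 1}$, bounds the expected number of short cycles (at most $\sum_{i=3}^{k} \frac{n^i p^i}{2i} = o(n)$), and bounds the probability that $\alpha \ge t$ for $t = \lceil \frac{3}{p}\ln n\rceil$ by a union bound; deleting one vertex from each short cycle yields a graph on at least $n/2$ vertices with girth $> k$ and independence number at most $t = O(n^{1/k}\ln n) = O(n^{1-\varepsilon})$ for a suitable $\varepsilon = \varepsilon(k) > 0$. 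These graphs lie in $X_k$ by construction.

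Putting the pieces together: the family of such graphs, as $n\to\infty$, gives $G \in X_k$ with $|V(G)| = m \to \infty$ while $\max\{\alpha(G),\omega(G)\} \le \max\{O(m^{1-\varepsilon}), 2\} = O(m^{1-\varepsilon})$, so there is no constant $A$ with $\max\{\alpha(G),\omega(G)\} \ge A\,|V(G)|$. By Proposition~\ref{LLRN} this is equivalent to Ramsey numbers not being linear in $X_k$, completing the proof. I would also remark on the corollary this yields for Conjecture~\ref{con:2}: a forest contains no cycles and so belongs to $X_k$ for all $k$, hence a finitely defined class whose forbidden set contains no forest is not contained in any $X_k$ — but that direction of the argument goes the other way, and for the lemma itself only the construction above is needed.

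The main obstacle, such as it is, is purely one of bookkeeping in the probabilistic argument: choosing $p$ as a function of $n$ and $k$ so that simultaneously the expected count of cycles of length at most $k$ is $o(n)$ and the independence number bound is genuinely sublinear (a power of $n$ strictly below $1$), then verifying the deletion step preserves a constant fraction of the vertices. None of this is deep — it is the textbook Erd\H{o}s girth--chromatic-number theorem — so the real content of the lemma is simply the observation that $X_k$ is closed under this construction and that, via Proposition~\ref{LLRN}, sublinear homogeneous sets are exactly the obstruction to linear Ramsey numbers.
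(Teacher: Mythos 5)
Your proof is correct and follows essentially the same route as the paper's: both observe that graphs in $X_k$ are triangle-free, so linearity would force $\alpha(G)\geq |V(G)|/t$ for some constant $t$, and both refute this by invoking the classical Erd\H{o}s probabilistic construction of high-girth graphs with independence number $O(n^{1-\varepsilon}\ln n)$ (which the paper simply cites from Alon--Spencer rather than sketching). The only difference is presentational: you route the argument explicitly through Proposition~\ref{LLRN} and outline the random-graph calculation, while the paper argues by direct contradiction and leaves the construction as a citation.
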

\begin{proof}
	Assume to the contrary that Ramsey numbers for the class $X_k$ are linear.  
	Then, since graphs in $X_k$ do not contain cliques of size three, there exists a constant $t=t(k)$ 
	such that any $n$-vertex graph from the class has an independent set of size at least $n/t$. 
	
	It is well-known (see, e.g., \cite{method}) that  $X_k$ contains $n$-vertex graphs with the independence number of order $O(n^{1-\epsilon} \ln n)$, 
	where $\epsilon>0$ depends on $k$, which is smaller than $n/t$ for large $n$. 
	This contradiction shows that $X_k$ is not of linear Ramsey numbers.
\end{proof}

\begin{theorem}\label{thm:onlyif}
	Let $X$ be a class of graphs defined by a finite set $M$ of forbidden induced subgraphs.
	If $M$ does not contain a graph in at least one of the following four classes, then $X$ is not of linear Ramsey numbers: 
	$P_3$-free graphs, the complements of $P_3$-free graphs, forests, the complements of forests. 
\end{theorem}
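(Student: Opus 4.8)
The plan is to derive the claim from Lemma~\ref{lem:E} together with two elementary pigeonhole constructions, after first collapsing the four cases of the hypothesis into two by complementation. Write $\mathcal{P}$ for the class of $P_3$-free graphs (equivalently, disjoint unions of cliques) and $\mathcal{F}$ for the class of forests, and let $\overline{\mathcal{P}},\overline{\mathcal{F}}$ be the corresponding classes of complements; the hypothesis is that $M$ is disjoint from at least one of $\mathcal{P},\overline{\mathcal{P}},\mathcal{F},\overline{\mathcal{F}}$. If $X$ is defined by the finite set $M$, then the class $\overline{X}=\{\overline{G}:G\in X\}$ is defined by $\overline{M}=\{\overline{H}:H\in M\}$, and $R_X(p,q)=R_{\overline{X}}(q,p)$, so $X$ is of linear Ramsey numbers if and only if $\overline{X}$ is. Since complementation interchanges $\mathcal{P}$ with $\overline{\mathcal{P}}$ and $\mathcal{F}$ with $\overline{\mathcal{F}}$, it suffices to prove the theorem in the two cases $M\cap\mathcal{F}=\emptyset$ and $M\cap\mathcal{P}=\emptyset$.

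In the case $M\cap\mathcal{F}=\emptyset$, every member of $M$ contains a cycle, hence has finite girth, and since $M$ is finite I can set $k$ to be the maximum girth of a graph in $M$ (and $k=3$ if $M$ is empty). The key observation is that $X_k\subseteq X$, where $X_k$ is the class of $(C_3,\dots,C_k)$-free graphs from Lemma~\ref{lem:E}: a graph $G\in X_k$ has girth greater than $k$ (a shortest cycle is always chordless, hence induced), so $G$ contains no member of $M$ even as a subgraph, let alone as an induced one, and therefore $G\in X$. Consequently $R_X(p,q)\ge R_{X_k}(p,q)$ for all $p,q$, and the non-linearity of $X_k$ given by Lemma~\ref{lem:E} transfers to $X$.

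In the case $M\cap\mathcal{P}=\emptyset$, every member of $M$ contains an induced $P_3$, so no disjoint union of cliques contains a member of $M$ as an induced subgraph, i.e.\ $\mathcal{P}\subseteq X$. To see that $X$ is not of linear Ramsey numbers it then suffices to exhibit, for each $p$ and $q$, a large member of $\mathcal{P}$ with small clique and independence numbers: the graph $(q-1)K_{p-1}$ has $(p-1)(q-1)$ vertices, clique number $p-1$ and independence number $q-1$, so it contains neither $K_p$ nor an independent set of size $q$. Thus $R_X(p,q)\ge R_{\mathcal{P}}(p,q)\ge (p-1)(q-1)+1$, which is quadratic and hence exceeds $k(p+q)$ for every constant $k$ once $p=q$ is taken large enough; so $X$ is not of linear Ramsey numbers.

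These two cases, combined with the complementation reduction, give the theorem. I do not expect a serious obstacle: essentially all the difficulty is hidden in Lemma~\ref{lem:E}, which in turn rests on the probabilistic construction of graphs of high girth with sublinear independence number. The only points that need genuine (if routine) care are the identification of $X_k$ with the class of graphs of girth $>k$, so that Lemma~\ref{lem:E} applies verbatim; the verification that $k$ is finite; and the trivial monotonicity $R_{X'}(p,q)\le R_X(p,q)$ for $X'\subseteq X$ used twice above. One should also note that the small-$p,q$ degeneracies of $(q-1)K_{p-1}$ are irrelevant, since linearity is an asymptotic condition.
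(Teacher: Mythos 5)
Your proposal is correct and follows essentially the same route as the paper: the $P_3$-free case is handled by the quadratic lower bound from $(q-1)K_{p-1}$, the forest case by observing that $X$ contains the class $X_k$ of graphs of girth greater than $k$ for some finite $k$ and invoking Lemma~\ref{lem:E}, and the remaining two cases by complementation. The only difference is presentational: you make the complementation reduction and the identification of $X_k$ with the graphs of girth exceeding $k$ explicit, where the paper simply says ``similarly.''
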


\begin{proof}
	It is not difficult to see that a graph is $P_3$-free if and only if it is a disjoint union of cliques. 
	The class of $P_3$-free graphs contains  the graph $(q-1)K_{p-1}$ with $(q-1)(p-1)$ vertices and with no clique of size $p$ or independent set of size $q$,
	and hence this class is not of linear Ramsey numbers. Therefore, if $M$ contains no $P_3$-free graph, then $X$ contains all $P_3$-free graphs and hence is not of linear Ramsey numbers.
	Similarly, if $M$ contains no $\overline{P}_3$-free graph, then $X$ is not of linear Ramsey numbers.
	
	Now assume that $M$ contains no forest. Therefore, every graph in $M$ contains a cycle. Since the number of graphs in $M$ is finite, 
	$X$ contains the class of $(C_3,C_4,\ldots,C_k)$-free graphs for a finite value of $k$ and hence is not of linear Ramsey numbers by Lemma~\ref{lem:E}.
	Applying the same arguments to the complements of graphs in $X$, we conclude that if $M$ contains no co-forest, then $X$ is not of linear Ramsey numbers. 
\end{proof}

\section{Classes with linear Ramsey numbers}
\label{sec:linear}

In this section, we study classes of graphs defined by forbidden induced subgraphs with 4 vertices and determine Ramsey numbers for several classes in this family 
that  verify the ``if'' part of Conjecture~\ref{con:2}.
All the eleven graphs on 4 vertices are represented in Figure~\ref{fig:four-vertex}.

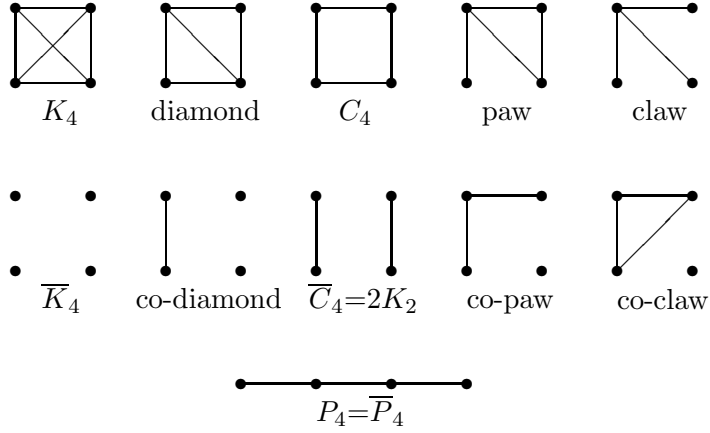
\begin{figure}[t]
	\begin{center}
		\begin{picture}(240,100)
		\setlength{\unitlength}{0.5mm}
		\put(0,100){\circle*{3}}
		\put(0,80){\circle*{3}} 
		\put(20,100){\circle*{3}}
		\put(20,80){\circle*{3}} 
		
		\put(40,100){\circle*{3}}
		\put(40,80){\circle*{3}} 
		\put(60,100){\circle*{3}}
		\put(60,80){\circle*{3}} 
		
		\put(80,100){\circle*{3}}
		\put(80,80){\circle*{3}} 
		\put(100,100){\circle*{3}}
		\put(100,80){\circle*{3}}
		
		\put(120,100){\circle*{3}}
		\put(120,80){\circle*{3}} 
		\put(140,100){\circle*{3}}
		\put(140,80){\circle*{3}} 
		
		\put(160,100){\circle*{3}}
		\put(160,80){\circle*{3}} 
		\put(180,100){\circle*{3}}
		\put(180,80){\circle*{3}}

		\put(0,50){\circle*{3}}
		\put(0,30){\circle*{3}} 
		\put(20,50){\circle*{3}}
		\put(20,30){\circle*{3}} 
		
		\put(40,50){\circle*{3}}
		\put(40,30){\circle*{3}} 
		\put(60,50){\circle*{3}}
		\put(60,30){\circle*{3}} 
		
		\put(80,50){\circle*{3}}
		\put(80,30){\circle*{3}} 
		\put(100,50){\circle*{3}}
		\put(100,30){\circle*{3}}
		
		\put(120,50){\circle*{3}}
		\put(120,30){\circle*{3}} 
		\put(140,50){\circle*{3}}
		\put(140,30){\circle*{3}} 
		
		\put(160,50){\circle*{3}}
		\put(160,30){\circle*{3}} 
		\put(180,50){\circle*{3}}
		\put(180,30){\circle*{3}}

		\put(60,0){\circle*{3}}
		\put(80,0){\circle*{3}} 
		\put(100,0){\circle*{3}}
		\put(120,0){\circle*{3}}

		\put(60,0){\line(1,0){20}} 
		\put(80,0){\line(1,0){20}}
		\put(100,0){\line(1,0){20}}
		\put(80,-10){$P_{4}$=$\overline{P}_4$}

		\put(0,100){\line(1,0){20}}
		\put(0,100){\line(0,-1){20}}
		\put(0,100){\line(1,-1){20}}
		\put(20,80){\line(0,1){20}}
		\put(20,80){\line(-1,0){20}}
		\put(0,80){\line(1,1){20}}
		\put(7,70){$K_{4}$}

		\put(40,100){\line(1,0){20}}
		\put(40,100){\line(0,-1){20}}
		\put(40,100){\line(1,-1){20}}
		\put(60,80){\line(0,1){20}}
		\put(60,80){\line(-1,0){20}}
		\put(36,70){diamond}
		
		\put(80,100){\line(1,0){20}}
		\put(80,100){\line(0,-1){20}}
		\put(100,80){\line(0,1){20}}
		\put(100,80){\line(-1,0){20}}
		\put(86,70){$C_{4}$}

		\put(120,100){\line(1,0){20}}
		\put(120,100){\line(0,-1){20}}
		\put(120,100){\line(1,-1){20}}
		\put(140,80){\line(0,1){20}}
		\put(124,70){paw}

		\put(160,100){\line(1,0){20}}
		\put(160,100){\line(0,-1){20}}
		\put(160,100){\line(1,-1){20}}
		\put(164,70){claw}

		\put(7,20){$\overline{K}_4$}

		\put(40,50){\line(0,-1){20}}
		\put(32,20){co-diamond}
		
		\put(80,50){\line(0,-1){20}}
		\put(100,50){\line(0,-1){20}}
		\put(78,20){$\overline{C}_4$=$2K_2$}

		\put(120,50){\line(0,-1){20}}
		\put(120,50){\line(1,0){20}}
		\put(120,20){co-paw}
		
		\put(160,50){\line(0,-1){20}}
		\put(160,50){\line(1,0){20}}
		\put(160,30){\line(1,1){20}}
		\put(160,20){co-claw}
		\end{picture}
	\end{center}
	\caption{All 4-vertex graphs} 
	\label{fig:four-vertex}
\end{figure}

Below we list which of these graphs are $P_3$-free and which of them are forests (take the complements for  $\overline{P}_3$-free graphs and for the complements of forests, respectively). 
\begin{itemize}
	\item $P_3$-free graphs: $K_4$, $\overline{K}_4$,  $2K_2$, co-diamond, co-claw.
	\item Forests: $\overline{K}_4$, $2K_2$, $P_4$, co-diamond, co-paw, claw.
\end{itemize}


\subsection{Claw- and co-claw-free graphs}
\label{sec:claw}


\begin{lemma}\label{lem:k4}
	If a $($claw,co-claw$)$-free graph $G$ contains a $\overline{K}_4$, then it is $K_3$-free.
\end{lemma}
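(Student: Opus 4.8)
The plan is to argue by contradiction: suppose a (claw, co-claw)-free graph $G$ contains an induced $\overline{K}_4$ on vertices $\{a_1, a_2, a_3, a_4\}$ but also contains a triangle $T$ on vertices $\{b_1, b_2, b_3\}$. The goal is to derive a forbidden induced subgraph — either a claw or a co-claw — from the interaction between these two sets, possibly after passing to a well-chosen sub-configuration. First I would record the two local obstructions explicitly in terms of neighbourhoods: if a vertex $v$ is adjacent to at least three pairwise nonadjacent vertices, we get a claw; dually, if $v$ is nonadjacent to at least three pairwise adjacent vertices, we get a co-claw. So each $b_i$ can be adjacent to at most two of the $a_j$'s (else claw centred at $b_i$), and each $a_j$ can be nonadjacent to at most two of the $b_i$'s (else co-claw centred at $a_j$), i.e.\ each $a_j$ is adjacent to at least one $b_i$.

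The key counting step is to compare the edge count between $\{a_1,\dots,a_4\}$ and $\{b_1,b_2,b_3\}$ from the two sides. From the $b$-side, there are at most $2 \cdot 3 = 6$ such edges; from the $a$-side, there are at least $4$ of them. I would then do a short case analysis on how these edges distribute. The cleanest route: since each $a_j$ has a neighbour among the $b_i$'s, by pigeonhole some $b_i$, say $b_1$, is adjacent to at least two of the $a_j$'s, say $a_1, a_2$; since $b_1$ has at most two $a$-neighbours, $b_1$ is nonadjacent to $a_3, a_4$. Now look at $a_3$: it has a neighbour among $b_1, b_2, b_3$, and it is not $b_1$, so WLOG $a_3 \sim b_2$. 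Similarly $a_4$ has a neighbour in $\{b_1, b_2, b_3\}\setminus\{b_1\}$. I expect the contradiction to pop out by examining a set like $\{b_1, a_1, a_3, x\}$ for a suitable fourth vertex, or by noting that $b_2$ (adjacent to $a_3$, and to $b_1, b_3$) forces a co-claw unless $b_2$ is also adjacent to enough $a$'s, which then overloads the edge count or creates a claw at $b_2$.

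The main obstacle I anticipate is bookkeeping: making sure the case split on the bipartite adjacency pattern between a 4-set and a 3-set is genuinely exhaustive and that in every branch one exhibits a concrete induced claw or co-claw (remembering that $\overline{K}_4$ being induced means all $a_ja_{j'}$ nonedges, and $T$ being a triangle means all $b_ib_{i'}$ edges, which are exactly the ingredients that make a claw/co-claw on a mixed 4-set easy to spot). A slicker alternative worth trying first is to avoid the full bipartite pattern: take any triangle $\{b_1,b_2,b_3\}$; each $b_i$ misses at most two of $a_1, \dots, a_4$ would be the wrong direction, so instead note each $a_j$ sees $\geq 1$ and each $b_i$ sees $\leq 2$ of the other set, then a degree-sum inequality $4 \leq \sum_i |N(b_i) \cap \{a_j\}| \leq 6$ leaves only a few patterns up to symmetry, and in each I would point to the explicit four vertices forming the forbidden subgraph. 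If that works, the whole proof is under half a page.
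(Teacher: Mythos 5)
Your skeleton is the same as the paper's: play the two local obstructions against each other across the bipartition between the independent set $A=\{a_1,\dots,a_4\}$ and the triangle $\{b_1,b_2,b_3\}$, record that each $b_i$ has at most two neighbours in $A$ (claw) and each $a_j$ has at least one neighbour in the triangle (co-claw), and pigeonhole to get $b_1\sim a_1,a_2$ and hence $b_1\not\sim a_3,a_4$. Up to that point everything is correct. But the proof is not finished there: the contradiction is only conjectured to ``pop out,'' and the candidate configurations you name do not work as written --- $\{b_1,a_1,a_3,x\}$ is neither a claw nor a co-claw for any evident $x$, since $b_1\not\sim a_3$ and $b_1\sim a_1$ kill both patterns. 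The one missing idea, which is exactly what makes the paper's case analysis collapse in three lines, is to aim the claw at $b_1$ using a \emph{triangle} vertex as the third leaf: since $N(b_1)\cap A=\{a_1,a_2\}$ exactly, each of $b_2,b_3$ must be adjacent to one of $a_1,a_2$, for otherwise $\{b_1,a_1,a_2,b_j\}$ is a claw centred at $b_1$. Say $b_2\sim a_1$. Then $b_1,b_2,a_1$ is a triangle, and $a_3$ (likewise $a_4$) is nonadjacent to both $b_1$ and $a_1$, so co-claw-freeness forces $b_2\sim a_3$ and $b_2\sim a_4$; now $\{b_2,a_1,a_3,a_4\}$ is a claw. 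Without some pivot of this kind, your ``few patterns up to symmetry'' still have to be enumerated and each one closed; as submitted, the endgame is a placeholder rather than an argument.

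There is a second, smaller gap: you tacitly assume the $\overline{K}_4$ and the triangle are vertex-disjoint. They may share exactly one vertex, and one cannot in general replace the pair by a disjoint pair, so this case needs its own (short) treatment. If $a_4$ is the shared vertex, then each of $a_1,a_2,a_3$ needs a neighbour among the other two triangle vertices (else a co-claw on the triangle plus $a_i$), so one triangle vertex $x$ sees two of them, say $a_1,a_2$; since $x\sim a_4$ as well and $a_1,a_2,a_4$ are pairwise nonadjacent, $\{x,a_1,a_2,a_4\}$ is a claw. The paper handles this case explicitly, and your write-up should too.
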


\begin{proof}
	Assume $G$ contains a $\overline{K}_4$ induced by $A=\{a_1,a_2,a_3,a_4\}$
	and suppose by contradiction that $G$ also contains a $K_3$ induced by $Z=\{x,y,z\}$.  
	
	Let first $A$ be disjoint from $Z$.
	To avoid a co-claw, each vertex of $A$ has a neighbour in $Z$ and hence one of the vertices of $Z$ is adjacent to two vertices of $A$, say $x$ is adjacent to $a_1$ and $a_2$.
	Then, to avoid a claw, $x$ has no other neighbours in $A$ and $y$ has a neighbour in $\{a_1,a_2\}$, say $y$ is adjacent to $a_1$. 
	This implies that $y$ is adjacent to $a_3$ (else $x,y,a_1,a_3$ induce a co-claw) and similarly $y$ is adjacent to $a_4$.
	But then $y,a_1,a_3,a_4$ induce a claw, a contradiction.
	
	If $A$ and $Z$ are not disjoint, they have at most one vertex in common, say $a_4=z$.
	Again, to avoid a co-claw, each vertex in $\{a_1,a_2,a_3\}$ has a neighbour in $\{x,y\}$ and hence, without loss of generality, $x$ is adjacent to $a_1$ and $a_2$.
	But then  $x,a_1,a_2,a_4$ induce a claw, a contradiction again.
	 \end{proof}

\begin{lemma}\label{lem:finite}
	The maximum number of vertices in a $($claw,co-claw,$K_4,\overline{K}_4)$-free graph  is 9.
\end{lemma}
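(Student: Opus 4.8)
The plan is to bound the size of a (claw, co-claw, $K_4$, $\overline{K}_4$)-free graph $G$ by analysing it through the lens of Ramsey-type arguments combined with the structural restrictions. Since $G$ is $\overline{K}_4$-free, $\alpha(G) \leq 3$; since $G$ is $K_4$-free, $\omega(G) \leq 3$. By Ramsey's theorem, $R(4,4) = 18$, so already $|V(G)| \leq 17$. To push this down to $9$, I would exploit the claw and co-claw conditions directly.

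First I would fix a vertex $v \in V(G)$ and consider its neighbourhood $N(v)$ and non-neighbourhood $\overline{N}(v)$. Inside $N(v)$, the graph is claw-free and $K_3$-free is not forced, but $G[N(v)]$ cannot contain an independent set of size $3$: three pairwise non-adjacent neighbours of $v$ would form a claw with $v$. So $\alpha(G[N(v)]) \leq 2$, and also $\omega(G[N(v)]) \leq 2$ since $\{v\} \cup \omega$ would give a $K_4$. Hence $G[N(v)]$ is a graph with no $K_3$ and no $\overline{K}_3$, so $|N(v)| \leq R(3,3) - 1 = 5$. Dually, using the co-claw condition, $G[\overline{N}(v)]$ has no $\overline{K}_3$ (three pairwise adjacent non-neighbours of $v$ plus $v$ form a co-claw) and no $K_3$ (that would be a $K_4$-free... wait, $\overline{N}(v)$ with a $K_3$ is fine for $K_4$, but the co-claw forbids $K_3$ in $\overline{N}(v)$ together with $v$ anticomplete — precisely a co-claw), so again $|\overline{N}(v)| \leq R(3,3) - 1 = 5$. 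This gives $|V(G)| = 1 + |N(v)| + |\overline{N}(v)| \leq 11$.

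To get from $11$ down to $9$, I would look more carefully at the case where one of the neighbourhoods has size $5$. If $|N(v)| = 5$, then $G[N(v)]$ is the unique (up to isomorphism) $(K_3, \overline{K}_3)$-free graph on $5$ vertices, namely $C_5$. I would then argue that the presence of an induced $C_5$ in $N(v)$ (together with $v$, forming a specific $6$-vertex configuration, a "wheel" $W_5$) severely constrains the rest of $G$: every vertex of $\overline{N}(v)$ must attach to the $C_5$ in a way that creates neither a claw nor a co-claw nor a $K_4$ nor $\overline{K}_4$, and a short case analysis on the possible neighbourhoods of such a vertex in $C_5$ should force $\overline{N}(v)$ to be small (likely empty or of size at most $2$, something in that range), while symmetrically handling $|\overline{N}(v)| = 5$. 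Balancing these cases should yield the bound $9$. Finally, I would exhibit an explicit $9$-vertex example meeting all four constraints — the natural candidate is $C_3 \times C_3$ or $\overline{C_9}$ or the Paley-type graph on $9$ vertices (indeed $3K_3$'s complement, i.e. $K_{3,3,3}$, has $\alpha = 3$, $\omega = 3$; one checks it is claw- and co-claw-free) — to show the bound is attained.

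**The hard part will be** the step from $11$ to $9$: ruling out (or tightly constraining) the configurations where a neighbourhood has size $5$ requires understanding how vertices attach to an induced $C_5$ under all four forbidden-subgraph constraints simultaneously, and making sure the bookkeeping across the $v$-neighbourhood / $v$-non-neighbourhood split is consistent. I expect the proof will actually proceed by leveraging Lemma~\ref{lem:k4} more cleverly — perhaps by first showing $G$ itself must contain a large clique or large independent set and then localising — rather than the brute neighbourhood split I sketched; but the neighbourhood-partition idea is the natural first attack and already gets most of the way there.
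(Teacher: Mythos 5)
Your approach is essentially the paper's: fix a vertex $v$, observe that $N(v)$ contains no triangle (else $K_4$) and no independent set of size $3$ (else a claw), so $|N(v)|\le R(3,3)-1=5$, dually $|\overline{N}(v)|\le 5$, and then handle the case where one side has exactly $5$ vertices and hence induces a $C_5$. The paper closes that case by a double count: each $C_5$-vertex has at most $2$ neighbours in $\overline{N}(v)$ (two non-adjacent such neighbours form a claw with $v$, so they are pairwise adjacent, and three of them would give a $K_4$), while each vertex of $\overline{N}(v)$ needs at least $3$ neighbours on the $C_5$ (otherwise it has two adjacent non-neighbours there, which together with $v$ give a triangle missing it, i.e.\ a co-claw); hence $3|\overline{N}(v)|\le 10$, so $|\overline{N}(v)|\le 3$ and $|V(G)|\le 9$. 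Your guess that this case forces $|\overline{N}(v)|\le 2$ is slightly off (the bound is $3$), but that does not affect the conclusion.

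The one genuine error is in your extremal example. $K_{3,3,3}$ is \emph{not} claw-free: a vertex of one part together with the three vertices of another part induces a claw (the centre is adjacent to all three, and they are pairwise non-adjacent). Likewise $\overline{C_9}$ fails, since $\alpha(C_9)=4$ means $\overline{C_9}$ contains a $K_4$. The correct witness, which you also list, is the $3\times 3$ rook's graph $K_3\,\square\,K_3$ (the Paley graph of order $9$): there each neighbourhood induces $2K_2$ (no triangle, no independent set of size $3$) and each non-neighbourhood induces $C_4$, and $\alpha=\omega=3$. So the plan is sound and matches the paper's, but the verification of the $9$-vertex example needs to be done on the right graph.
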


\begin{proof}
	Let $G$ be a (claw,co-claw,$K_4,\overline{K}_4$)-free graph and let $x$ be a vertex of $G$. Denote by $A$ the set of neighbours and by $B$ the set of non-neighbours of $x$.
	Clearly, $A$ contains neither triangles nor anti-triangles, since otherwise either a $K_4$ or a claw arises.
	Therefore, $A$ has at most 5 vertices, and similarly $B$ has at most 5 vertices. 
	
	If $|A|=5$, then $G[A]$ must be a $C_5$ induced by vertices, say, $a_1,a_2,a_3,a_4,a_5$ (listed along the cycle).
	In order to avoid a claw or $K_4$, each vertex of $A$ can be adjacent to at most 2 vertices of $B$, which gives rise to at most 10 edges between $A$ and $B$.
	On the other hand, to avoid a co-claw, each vertex of $B$ must be adjacent to at least 3 vertices of $A$. Therefore, $B$ contains at most 3 vertices and hence 
	$|V(G)|\le 9$. Similarly, if $|B|=5$, then $|V(G)|\le 9$. 
	

	It remains to show that there exists a (claw,co-claw,$K_4,\overline{K}_4$)-free graph with 9 vertices. The $3 \times 3$ rook's graph (also known as the Paley graph of order $9$), shown in Figure~\ref{fig:paley}, is a witnessing example.
	\end{proof}

	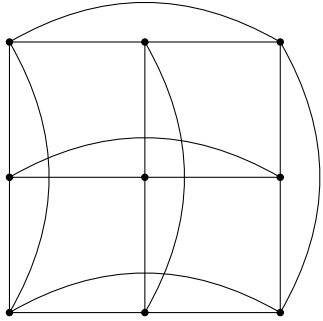
\begin{figure}[ht]
	\begin{center}
		\begin{tikzpicture}[scale=0.6, transform shape]
			
			\foreach \i in {0,...,2}
			{
				\foreach \x in {0,...,2}
				{
					\filldraw (3 * \i, 3 * \x) circle (2pt) node{}; 
				}
			
			\draw (0, 3 * \i) -- (6, 3 * \i);
			\draw (0, 3 * \i) to [out=30,in=150] (6, 3 * \i);
			
			\draw (3 * \i, 0) -- (3 * \i, 6);
			\draw (3 * \i, 0) to [out=60,in=-60] (3 * \i, 6);
			}

			
		\end{tikzpicture}
	\end{center}
	\caption{The $3 \times 3$ rook's graph.}
	\label{fig:paley}
\end{figure}

\begin{theorem}\label{thm:claw}
	For the class $A$ of $($claw,co-claw$)$-free graphs and all $a,b\ge 3$, 
	$$R_A(a,b)=\max \big\{ \left\lfloor(5a-3)/2\right\rfloor,\left\lfloor(5b-3)/2\right\rfloor \big\},$$
	unless $a=b=4$ in which case $R_A(a,b)=10$. 
\end{theorem}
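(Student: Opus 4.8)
The plan is to establish the formula by proving matching upper and lower bounds. For the lower bound, I would exhibit, for each $n < \max\{\lfloor(5a-3)/2\rfloor, \lfloor(5b-3)/2\rfloor\}$, a (claw, co-claw)-free graph on $n$ vertices with no clique of size $a$ and no independent set of size $b$. The natural candidate is a ``blow-up'' of $C_5$: replace the five vertices of $C_5$ by cliques (for the $\omega$-side construction) or by independent sets (for the $\alpha$-side, i.e.\ the complement), of as-equal-as-possible sizes. A $C_5$ with vertices blown into cliques of sizes summing to $N$ has clique number equal to the largest sum of two consecutive parts and independence number $2$ (any independent set picks at most one vertex from each of at most two non-adjacent blow-up classes, but actually $\alpha=2$ only if each class is a clique of size $1$ — so more care is needed; one wants the classes that are blown into cliques arranged so that $\alpha$ stays $2$, which forces the five parts to each be a clique and two non-adjacent parts to give $\alpha=2$). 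I would tune the five part sizes to push $\omega$ just below $a$ while keeping the vertex count as large as possible; a short optimization (five nearly-equal parts, the relevant bound being roughly $\frac{2N}{5} < a$, i.e.\ $N$ up to about $\frac{5(a-1)}{2}$) yields the claimed $\lfloor(5a-3)/2\rfloor$ threshold, and the complementary construction yields the $\lfloor(5b-3)/2\rfloor$ threshold. One must check these blow-ups are genuinely (claw, co-claw)-free: a claw or co-claw has an independent (resp.\ clique) triple, and in a $C_5$-blow-up of cliques the independence number is $2$, killing the claw; the co-claw needs a triangle plus an isolated vertex, and in $C_5$ the only triangles come from within-block or two-consecutive-block structure and every vertex outside has a neighbour there, killing the co-claw. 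The exceptional value $a=b=4$ corresponds exactly to Lemma~\ref{lem:finite}: the $3\times 3$ rook's graph on $9$ vertices is (claw, co-claw, $K_4$, $\overline K_4$)-free, so $R_A(4,4) \geq 10$.

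For the upper bound, let $G \in A$ have $n \geq \max\{\lfloor(5a-3)/2\rfloor, \lfloor(5b-3)/2\rfloor\}$ vertices (and $n \geq 10$ in the $a=b=4$ case), and suppose for contradiction that $\omega(G) < a$ and $\alpha(G) < b$. The key structural input is that (claw, co-claw)-free graphs are extremely restricted once they are large: by Lemma~\ref{lem:finite}, a (claw, co-claw)-free graph with more than $9$ vertices must contain a $K_4$ or a $\overline K_4$, and then by Lemma~\ref{lem:k4} (and its complement) if it contains a $\overline K_4$ it is $K_3$-free, while if it contains a $K_4$ it is $\overline K_3$-free. So the argument splits into two symmetric regimes: either $G$ is $\overline K_3$-free (triangle-free complement, i.e.\ $\alpha(G) \leq 2$) or $G$ is $K_3$-free. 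In the $K_3$-free case, $\omega(G) \leq 2$, so $\omega(G) < a$ is automatic and the obstruction must come from $\alpha$; I would then need a Ramsey-type bound for (claw, co-claw)-free triangle-free graphs. A triangle-free claw-free graph has maximum degree $\leq 2$ (the neighbourhood of any vertex is an edgeless claw-free, hence $\leq 2$-vertex, set — wait, edgeless of size $\geq 3$ is a claw, size $2$ is fine, but two non-adjacent neighbours plus the vertex is only a path; actually triangle-free forces the neighbourhood independent, and claw-free forces it of size $\leq 2$), so $G$ is a disjoint union of paths and cycles; being also co-claw-free (no $K_1 + K_3$, automatically fine, but also $\overline{\text{claw}}=K_3+K_1$ — need to recheck) and, crucially, having bounded components, one computes $\alpha$ directly: a union of paths/cycles on $n$ vertices has $\alpha \geq \lceil n/2 \rceil$ roughly, but co-claw-freeness restricts the number of components. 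I would work out that such a graph on $n$ vertices has $\alpha(G) \geq \lceil 2n/5 \rceil$ with the extremal configuration being a disjoint union of $C_5$'s — pinned down by co-claw-freeness forbidding $\geq 2$ components unless each is a $C_5$ or similar — and then $n \geq \lfloor(5b-3)/2\rfloor$ forces $\alpha(G) \geq b$, a contradiction. The $\overline K_3$-free case is the complement.

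The main obstacle I expect is the careful case analysis in the upper bound for the triangle-free (and dually $\overline K_3$-free) regime: pinning down exactly which disjoint unions of paths and cycles are co-claw-free, and extracting the sharp $\lceil 2n/5\rceil$ bound on the independence number together with its extremal graphs, so that the threshold comes out to precisely $\lfloor(5b-3)/2\rfloor$ rather than something off by a constant. A secondary subtlety is handling the ``small'' case $n \leq 9$ where Lemma~\ref{lem:finite} does not yet force the triangle-free/complement dichotomy — here one checks directly that for $a,b \geq 3$ with $\{a,b\}\neq\{4,4\}$ the threshold $\max\{\lfloor(5a-3)/2\rfloor,\lfloor(5b-3)/2\rfloor\}$ is at most $9$ only in a few cases ($a$ or $b$ equal to $3$, or $\{a,b\}=\{3,4\}$), which can be verified by hand or by a finite check against the classification of small (claw, co-claw)-free graphs, and separately that the $a=b=4$ exception is exactly the rook's graph phenomenon. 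Throughout, I would use the self-complementary nature of the class $A$ (claw $\leftrightarrow$ co-claw under complementation) to halve the work, proving one side and transferring to the other by passing to $\overline G$ and swapping the roles of $a$ and $b$.
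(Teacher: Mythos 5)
There is a genuine gap in your lower bound: the blow-up of $C_5$ is not in the class $A$. If you replace the vertices of $C_5$ by cliques $B_1,\dots,B_5$ and any part has at least two vertices, say $u,v\in B_2$ and $w\in B_1$, then $\{u,v,w\}$ induces a triangle while every vertex of $B_4$ is anticomplete to it (in $C_5$, part $4$ is adjacent only to parts $3$ and $5$), so these four vertices induce a co-claw. Your check that ``every vertex outside has a neighbour there'' is false precisely because two of the five parts see neither endpoint of a given edge of the $C_5$. Dually, the blow-up by independent sets contains a claw as soon as one part has two vertices. So neither construction certifies the lower bound. The correct extremal graphs are \emph{disjoint unions} of $C_5$'s, padded with a single $P_2$ when $b$ is even: such a graph is triangle-free (hence co-claw-free) and of maximum degree $2$ (hence claw-free), has $\lfloor 5(b-1)/2\rfloor=\lfloor(5b-3)/2\rfloor-1$ vertices, $\omega=2<a$ and $\alpha=b-1$; the complements of these graphs handle the $\lfloor(5a-3)/2\rfloor$ side. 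With that substitution, and the $3\times 3$ rook's graph for the $a=b=4$ exception as you say, the lower bound is in order.

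Your upper bound, by contrast, is essentially sound and takes a more self-contained route than the paper. The paper uses Lemma~\ref{lem:k4} and Lemma~\ref{lem:finite} to split $A$ into (claw,$K_3$)-free graphs, their complements, and a finite class, and then simply cites the known value $R(3,b)=\lfloor(5b-3)/2\rfloor$ for claw-free graphs from the literature; you rederive that value from the fact that a triangle-free claw-free graph is a disjoint union of paths and cycles (none a $C_3$), each component of which has independence number at least two fifths of its order, so $\alpha\ge\lceil 2n/5\rceil\ge b$ once $n\ge\lfloor(5b-3)/2\rfloor$. One correction there: co-claw-freeness does not ``restrict the number of components'' in this regime --- any disjoint union of triangle-free graphs is co-claw-free --- but you do not need such a restriction, since the $2n/5$ bound holds component by component unconditionally.
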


\begin{proof}
	According to Lemma~\ref{lem:k4}, the class of  (claw,co-claw)-free graphs is the union of three classes: 
	\begin{itemize}
		\item the class $X$ of (claw,$K_3$)-free graphs, 
		\item the class $Y$ of (co-claw,$\overline{K}_3$)-free graphs and 
		\item the class $Z$ of (claw,co-claw,$K_4,\overline{K}_4$)-free graphs.
	\end{itemize}
	Clearly, $R_A(a,b)=\max \{ R_X(a,b),R_Y(a,b),R_Z(a,b) \}$.
	
	Since $K_3$ is forbidden in $X$, we have $R_X(a,b)=R_X(3,b)$,  Also, denoting by $B$ the class of claw-free graphs, we conclude that $R_X(3,b)=R_{B}(3,b)$. 
	As was shown in \cite{first}, $R_{B}(3,b)=\left\lfloor(5b-3)/2)\right\rfloor$. 
	Therefore,  $R_X(a,b)=\left\lfloor(5b-3)/2)\right\rfloor$. Similarly, $R_Y(a,b)=\left\lfloor(5a-3)/2)\right\rfloor$. 
	
	In the class $Z$, for all $a,b\ge 4$ we have  $R_Z(a,b)=10$  by Lemma~\ref{lem:finite}.
	Moreover, if additionally $\max \{ a,b \}\ge 5$, then $R_Z(a,b)<\max \{ R_X(a,b), R_Y(a,b) \}$.
	For $a=b=4$, we have $R_Z(4,4)=10>8=\max \{ R_X(4,4), R_Y(4,4) \}$.
	Finally, it is not difficult to see that $R_Z(3,b)\le R_X(3,b)$ and $R_Z(a,3)\le R_Y(a,3)$, and hence the result follows. 
	 \end{proof}


\subsection{Diamond- and co-diamond-free graphs}


\begin{lemma}\label{lem:k4-d}
	If a $($diamond,co-diamond$)$-free graph $G$ contains a $\overline{K}_4$, then it is bipartite. 
\end{lemma}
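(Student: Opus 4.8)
The plan is to show that $G$ has no induced odd cycle; since a shortest odd cycle is always chordless, this is equivalent to $G$ being bipartite. Fix an induced $\overline{K}_4$ on a set $A=\{a_1,a_2,a_3,a_4\}$ of four pairwise non-adjacent vertices. I will exclude induced odd cycles by length, handling first the long ones, then triangles, then the pentagon.

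Long odd cycles are immediate: on an induced cycle $u_1u_2\cdots u_{2k+1}$ with $2k+1\ge 7$, the four vertices $u_1,u_2,u_4,u_6$ induce a co-diamond, because $u_1u_2$ is an edge while $u_4$ and $u_6$ are non-adjacent and both anticomplete to $\{u_1,u_2\}$. Hence $G$ is $C_{2k+1}$-free for every $k\ge 3$, using co-diamond-freeness alone.

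Next I would show $G$ is triangle-free; this is the step that genuinely uses diamond-freeness together with the $\overline{K}_4$, and I expect it to be the main obstacle. Suppose $T=\{x,y,z\}$ induces a triangle. Since $A$ is independent, $|A\cap T|\le 1$. If $a_4\in T$, say $a_4=z$: applying co-diamond-freeness to the edge $xa_4$ together with pairs from $\{a_1,a_2,a_3\}$ shows that at least two of $a_1,a_2,a_3$ are adjacent to $x$, and likewise at least two are adjacent to $y$; hence some $a_i$ with $i\le 3$ is adjacent to both $x$ and $y$, and then $\{a_i,a_4,x,y\}$ induces a diamond. If $A\cap T=\emptyset$: diamond-freeness forces each $a_i$ to have exactly $0$, $1$, or $3$ neighbours in $T$, and at most one $a_i$ can be complete to $T$ (two such vertices would again induce a diamond with two vertices of $T$); feeding this into co-diamond-freeness applied to the three edges of $T$ leaves a short counting argument which forces exactly one $a_i$ — say $a_4$ — to be complete to $T$ while $a_1,a_2,a_3$ are adjacent to distinct single vertices of $T$; if $a_1\sim x$, then $\{a_1,x,a_2,a_3\}$ induces a co-diamond. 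Either way we reach a contradiction, so $G$ is triangle-free.

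Finally, the pentagon. Suppose $G$ contains an induced $C_5$ on $v_1v_2v_3v_4v_5$. Any vertex $w$ outside the cycle has an independent set of $C_5$ as its set of neighbours on the cycle (by triangle-freeness), hence at most two neighbours there and, if two, non-consecutive. Among $v_1,\dots,v_5$ there are five induced copies of $K_2+K_1$ (each edge of $C_5$ together with the unique vertex anticomplete to it), and since $N(w)$ meets $C_5$ in at most two non-consecutive vertices, at least one such copy avoids $N(w)$ entirely; that copy together with $w$ induces a co-diamond. Therefore $G$ has no vertices outside the $C_5$, so that $G=C_5$, contradicting $\alpha(G)\ge 4$. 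Thus $G$ has no induced $C_3$, no induced $C_5$, and no induced $C_{2k+1}$ for $k\ge 3$; as a shortest odd cycle is chordless, $G$ has no odd cycle at all and is bipartite.
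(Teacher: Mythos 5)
Your proof is correct, but it takes a genuinely different route from the paper's. The paper extends the $\overline{K}_4$ to a maximal independent set $A$ and argues directly that $B=V(G)-A$ is independent: maximality plus co-diamond-freeness force every $b\in B$ to have at most one non-neighbour in $A$, so two adjacent vertices of $B$ would share two neighbours in $A$ and create a diamond. That argument is about five lines and hands you an explicit bipartition. You instead exclude induced odd cycles length by length: co-diamond-freeness alone kills $C_{2k+1}$ for $k\ge 3$ (via $\{u_1,u_2,u_4,u_6\}$), a case analysis on how the $a_i$ attach to a triangle kills $C_3$, and a covering argument with the five induced $K_2+K_1$'s of a pentagon kills $C_5$; since a shortest odd cycle is chordless, bipartiteness follows. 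I checked the step you only sketched (the counting in the $A\cap T=\emptyset$ case): each $a_i$ with $0$ or $1$ neighbours in $T$ is anticomplete to $3$ or $1$ of the edges of $T$ respectively, each edge admits at most one such $a_i$, and at most one $a_i$ is complete to $T$, so the tallies force exactly the configuration you describe and the final co-diamond appears; so there is no gap, only more work. The trade-off is clear: your argument is elementary and self-contained but roughly four times longer and heavier on casework, while the paper's maximal-independent-set trick is the ``right'' lever here because the $\overline{K}_4$ hypothesis is exactly what makes every vertex outside $A$ almost complete to $A$. One small bonus of your approach is that the first step isolates a fact of independent interest, namely that co-diamond-free graphs have no induced cycles of length at least $6$ regardless of any $\overline{K}_4$.
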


\begin{proof}
	Assume $G$ contains a $\overline{K}_4$. Let $A$ be any maximal (with respect to inclusion) independent set containing the $\overline{K}_4$ and let $B=V(G)-A$.
	If $B$ is empty, then $G$ is edgeless (and hence bipartite). Suppose now $B$ contains a vertex $b$. Then $b$ has a neighbour $a$ in $A$ (else $A$ is not maximal) and at most one
	non-neighbour (else $a$ and $b$ together with any two non-neighbours of $b$ in $A$ induce a co-diamond). 
	
	Assume $B$ has two adjacent vertices, say $b_1$ and $b_2$. Since $|A|\ge 4$ and each of $b_1$ and $b_2$ has at most one non-neighbour in $A$, there are
	at least two common neighbours of $b_1$ and $b_2$ in $A$, say $a_1,a_2$. But then $a_1,a_2,b_1,b_2$ induce a diamond. This contradiction shows that $B$ is independent and hence 
	$G$ is bipartite. 
	 \end{proof}

\begin{lemma}\label{lem:cd-bipartite}
	A co-diamond-free bipartite graph containing at least one edge is either a simplex (a bipartite graph in which every vertex has at most one non-neighbour in the opposite part) 
	or a $K_{s,t}+K_1$ for some $s$ and $t$.
\end{lemma}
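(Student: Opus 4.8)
The plan is to isolate one structural fact about co-diamond-free graphs and then case-split on the number of isolated vertices of $G$. Recall that the co-diamond is the four-vertex graph $K_2 + 2K_1$, consisting of a single edge and two isolated vertices (see Figure~\ref{fig:four-vertex}). Hence $G$ is co-diamond-free if and only if, for every edge $uv \in E(G)$, the set $R_{uv} := V(G) \setminus (N(u) \cup N(v))$ of vertices adjacent to neither $u$ nor $v$ induces a clique: two non-adjacent vertices of $R_{uv}$ together with $u$ and $v$ would otherwise induce a co-diamond. Since $G$ is bipartite, every clique has at most two vertices, so $|R_{uv}| \le 2$ for each edge $uv$; moreover, a clique that contains an isolated vertex of $G$ consists of that vertex alone. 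This observation drives the whole proof.

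Fix a bipartition $V(G) = U \cup W$ and let $I$ be the set of isolated vertices of $G$. Choosing any edge $uv$ (one exists by hypothesis), all of $I$ lies in $R_{uv}$; as $R_{uv}$ is a clique while the vertices of $I$ are pairwise non-adjacent, $|I| \le 1$. If $I = \emptyset$, I would argue that $G$ is a simplex with respect to $U \cup W$: any $u \in U$ has a neighbour $w \in W$, every non-neighbour of $u$ inside $W$ lies in $R_{uw}$ (it misses $u$ by choice and misses $w$ because both lie in $W$), and any two such vertices are non-adjacent; since $R_{uw}$ is a clique, $u$ has at most one non-neighbour in $W$. The symmetric argument handles $W$, so $G$ is a simplex.

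The remaining case is $I = \{x\}$, where the required conclusion is the stronger assertion that $G - x$ is complete bipartite. For any edge $u'w'$ of $G$ — which necessarily avoids $x$ — we have $x \in R_{u'w'}$, and since $R_{u'w'}$ is a clique containing the isolated vertex $x$, it equals $\{x\}$. Thus $\{u', w'\}$ dominates every vertex other than $x$; in particular, because no vertex of $W$ is adjacent to $w' \in W$, the vertex $u'$ is adjacent to every vertex of $W \setminus \{x\}$. Since $x$ is the only isolated vertex of $G$, every $u \in U \setminus \{x\}$ is the $U$-endpoint of some edge, hence complete to $W \setminus \{x\}$; this gives $G - x = K_{s,t}$ with $s, t \ge 1$, i.e. $G = K_{s,t} + K_1$.

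The step I expect to demand the most care is the case $I = \{x\}$: one must notice that having a single isolated vertex already upgrades the bound "$|R_{u'w'}| \le 2$" to "$R_{u'w'}$ is a singleton", which is exactly what forces completeness rather than mere simplexity, and one must check that $G - x$ indeed has no isolated vertex (the neighbour witnessing that some $y \ne x$ is non-isolated cannot be $x$, since $x$ has no neighbours) so that the domination argument reaches every vertex on the $U$-side. Everything else is routine once the clique property of $R_{uv}$ is in hand; in particular, disconnected graphs need no separate treatment, being subsumed in the cases above.
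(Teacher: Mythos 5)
Your proof is correct and follows essentially the same route as the paper's: bound the number of isolated vertices by one, then show that with no isolated vertex every vertex has at most one non-neighbour in the opposite part (simplex), while with exactly one isolated vertex the rest of the graph is complete bipartite. Your packaging of the co-diamond condition as ``for every edge $uv$ the set of vertices adjacent to neither endpoint is a clique, hence has at most two vertices in a bipartite graph'' is a tidy reformulation of the same underlying argument, not a different one.
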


\begin{proof}
	Assume $G=(A,B,E)$ is a co-diamond-free bipartite graph containing at least one edge. Then $G$ cannot have two isolated vertices, since otherwise
	an edge together with two isolated vertices create an induced co-diamond.
	
	Assume $G$ has exactly one isolated vertex, say $a$, and let $G'=G-a$ Then any vertex $b\in V(G')$ is adjacent to every vertex in the opposite part of $G'$. 
	Indeed, if $b$ has a non-neighbour $c$ in the opposite part, then $a,b,c$ together with any neighbour of $b$ (which exists because $b$ is not isolated) induce a co-diamond. 
	Therefore, $G'$ is complete bipartite and hence $G=K_{s,t}+K_1$ for some $s$ and $t$.  
	
	Finally, suppose $G$ has no isolated vertices. Then every vertex $a\in A$ has at most one non-neighbour in $B$, since otherwise any two non-neighbours of $a$ in $B$ 
	together with $a$ and any neighbour of $a$ (which exists because $a$ is not isolated) induce a co-diamond. Similarly, every vertex $b\in B$ has at most one non-neighbour in $A$.
	Therefore, $G$ is a simplex.
	 \end{proof}

\begin{lemma}\label{lem:finite-d}
	The maximum number of vertices in a $($diamond,co-diamond,$K_4,\overline{K}_4)$-free graph  is 9.
\end{lemma}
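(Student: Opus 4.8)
The plan is to mirror the structure of the proof of Lemma \ref{lem:finite}, taking advantage of the fact that every $($diamond,co-diamond,$K_4,\overline K_4)$-free graph is in particular $($claw,co-claw,$K_4,\overline K_4)$-free — wait, that is not true, so I need a direct argument. Instead I would first invoke Lemma \ref{lem:k4-d}: if such a graph $G$ contains a $\overline K_4$, then $G$ is bipartite, and being also $\overline K_4$-free it has independence number at most $3$, so $|V(G)| \le 2\cdot 3 = 6 < 9$, and moreover $\chi(G)\le 2$ forces $\omega(G)\le 2$; since $G$ is $\overline K_4$-free and bipartite, each part has at most $3$ vertices. Hence if $G$ contains a $\overline K_4$ we are done with room to spare. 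By complementation (the class is closed under complement since diamond and $K_4$ are swapped with co-diamond and $\overline K_4$), the same holds if $G$ contains a $K_4$-complement situation — so the interesting case is when $G$ is additionally $($\,this is automatic\,$)$; really the point is we may now also use that $G$ contains no $\overline K_4$ \emph{and} no $K_4$ as given, and we have just handled nothing new, so the main work is the case analysis on a vertex's neighbourhood exactly as before.

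So the core argument: pick a vertex $x$, let $A = N(x)$ and $B = V(G)\setminus(N(x)\cup\{x\})$. As in Lemma \ref{lem:finite}, $G[A]$ is $\{K_3,\overline K_3\}$-free: a triangle in $A$ gives a $K_4$ with $x$, and an anti-triangle in $A$ together with $x$ gives... a claw — but we do not forbid claws here. I would instead observe that an anti-triangle $\{a_1,a_2,a_3\}$ in $A$ together with $x$ induces the graph with edges $xa_1,xa_2,xa_3$ and no others, i.e.\ a claw, which \emph{is} permitted, so this does not immediately bound $|A|$. The fix is to use co-diamond-freeness differently: any anti-triangle in $A$ plus $x$ would need another structure. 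Let me reconsider — the right move is: $G[A\cup\{x\}]$ is $K_4$-free and, since it contains the dominating vertex $x$, it is also "diamond-free forces $G[A]$ has no $P_3$"? No. The cleanest route: $G[A]$ is $K_3$-free (else $K_4$ with $x$) and $G[A]$ is co-diamond-free is inherited; a $K_3$-free co-diamond-free graph on $\ge ?$ vertices — by Lemma \ref{lem:cd-bipartite} applied to a bipartite piece, or directly, a triangle-free co-diamond-free graph has at most ... I expect such a graph to be very small (a handful of vertices), and symmetrically $G[B]$ with the dual argument ($\overline K_3$-free, diamond-free) is small. Combining the bounds $|A|\le c$, $|B|\le c$ gives $|V(G)|\le 2c+1$, and I would then tighten the edge-counting between $A$ and $B$ exactly as in Lemma \ref{lem:finite} to bring the total down to $9$.

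The main obstacle I anticipate is precisely this: unlike the claw/co-claw case, forbidding the diamond and co-diamond does \emph{not} immediately cap $|N(x)|$ at $5$, because anti-triangles in the neighbourhood are not obstructions on their own. I expect to need a small auxiliary claim classifying $\{K_3,\text{co-diamond}\}$-free graphs (they should be disjoint unions of at most two stars, or similarly restricted — note a co-diamond-free graph has at most one non-trivial connected component among... ), bounding their order by a small constant, and dually for $\{\overline K_3,\text{diamond}\}$-free graphs. Once that structural lemma is in hand, the rest is the same double counting of $A$–$B$ edges as in Lemma \ref{lem:finite}, and the matching lower bound is again furnished by the $3\times 3$ rook's graph (Figure \ref{fig:paley}), which is self-complementary and, being triangle-free... no, it contains triangles — rather, one checks directly it is diamond-free and co-diamond-free, $K_4$-free and $\overline K_4$-free, on $9$ vertices. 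So the lower bound reuses the same witness, and only the upper-bound case analysis genuinely differs.
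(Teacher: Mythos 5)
Your proposal has a genuine gap at exactly the point you flag as the ``main obstacle''. The observation you raise and then dismiss --- ``diamond-free forces $G[A]$ has no $P_3$? No.'' --- is in fact the key step, and the correct answer is yes: if $a,b,c\in A=N(x)$ induce a $P_3$ (say $ab,bc\in E$ and $ac\notin E$), then $\{x,a,b,c\}$ induces $K_4$ minus the edge $ac$, i.e.\ a diamond. Hence $G[A]$ is $P_3$-free as well as $K_3$-free, so it is a disjoint union of cliques each of size at most $2$; co-diamond-freeness then caps the number of components at $2$ when some component is an edge (giving $|A|\le 4$), and $\overline{K}_4$-freeness caps it at $3$ when all components are singletons (giving $|A|\le 3$). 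Dually $|B|\le 4$, since the class is closed under complementation, so $|V(G)|\le 9$ with no edge-counting between $A$ and $B$ needed at all. Your fallback plan --- an auxiliary claim that $\{K_3,\text{co-diamond}\}$-free graphs have order bounded by a small constant --- is false: the stars $K_{1,n}$ and the complete bipartite graphs $K_{n,n}$ are triangle-free and co-diamond-free and arbitrarily large (you half-notice this yourself when you predict ``disjoint unions of at most two stars''). So without the $P_3$ observation the upper bound does not go through.

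Two smaller points. The opening appeal to Lemma~\ref{lem:k4-d} is vacuous: $G$ is $\overline{K}_4$-free by hypothesis, so the case ``$G$ contains a $\overline{K}_4$'' never arises, and that paragraph can be deleted. Your lower-bound witness is correct and is exactly the one the paper uses: the $3\times 3$ rook's graph on $9$ vertices is (diamond, co-diamond, $K_4$, $\overline{K}_4$)-free.
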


\begin{proof}
	Let $G$ be a (diamond,co-diamond,$K_4,\overline{K}_4$)-free graph and $x$ be a vertex of $G$. Denote by $A$ the set of neighbours and by $B$ the set of non-neighbours of $x$.
	Then $G[A]$ is $(P_3,K_3)$-free, else $G$ contains either a diamond or a $K_4$. Since $G[A]$ is $P_3$-free, every connected component of $G[A]$ is a clique and since this graph is $K_3$-free,
	every connected component has at most 2 vertices. If at least one of the components of $G[A]$ has 2 vertices, the number of components is at most 2 (since otherwise a co-diamond arises),
	in which case $A$ has at most 4 vertices. If all the components of $G[A]$ have size 1, the number of components is at most 3 (since otherwise a $\overline{K}_4$  arises),
	in which case $A$ has at most 3 vertices. Similarly, $B$ has at most 4 vertices and hence $|V(G)|\le 9$. 
	
	To conclude the proof, we observe that the Paley graph of order $q=3^2$ described in the proof of Lemma~\ref{lem:finite} is (diamond,co-diamond,$K_4,\overline{K}_4$)-free.
	 \end{proof}

\begin{theorem}\label{thm:diamond}
	For the class $A$ of $($diamond,co-diamond$)$-free graphs and $a,b\ge 3$, 
	$$R_A(a,b)=\max \{ 2a-1,2b-1 \},$$
	unless $a,b\in\{4,5\}$, in which case $R_A(a,b)=10$, and unless $a=b=3$, in which case $R_A(a,b)=6$. 
\end{theorem}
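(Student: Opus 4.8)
The plan is to follow the template of the proof of Theorem~\ref{thm:claw}: decompose the class $A$ of $(\text{diamond},\text{co-diamond})$-free graphs into three subclasses whose restricted Ramsey numbers are individually tractable, and then take the maximum. Since the complement of a diamond is a co-diamond, $A$ is closed under complementation; so by Lemma~\ref{lem:k4-d} and the same statement applied to complements, every graph in $A$ containing a $\overline{K}_4$ is bipartite and every graph in $A$ containing a $K_4$ is co-bipartite. Hence $A$ is the union of the class $\mathcal{X}$ of co-diamond-free bipartite graphs, the class $\mathcal{Y}=\{\overline{G}:G\in\mathcal{X}\}$ of their complements, and the class $\mathcal{Z}$ of $(\text{diamond},\text{co-diamond},K_4,\overline{K}_4)$-free graphs, which by Lemma~\ref{lem:finite-d} consists of graphs on at most $9$ vertices. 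Every graph in $\mathcal{X}\cup\mathcal{Z}$ is bipartite or has clique number at most $3$, hence has clique number below $a$ as soon as $a\geq 3$; dually for $\mathcal{Y}$ and $b\geq 3$. It therefore suffices to compute $R_{\mathcal{X}}(a,b)$, $R_{\mathcal{Y}}(a,b)=R_{\mathcal{X}}(b,a)$ and $R_{\mathcal{Z}}(a,b)$ and take their maximum.

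For $\mathcal{X}$ I would invoke Lemma~\ref{lem:cd-bipartite}: a co-diamond-free bipartite graph is edgeless, a $K_{s,t}+K_1$, or a simplex, and in each case its independence number is read off directly (for a simplex with parts of sizes $s\leq t$ it is $t$, barring degenerate tiny cases; for $K_{s,t}+K_1$ it is $\max(s,t)+1$). A routine computation then shows that, among these, the largest graph with no independent set of size $b$ is the simplex with both parts of size $b-1$ (for instance $K_{b-1,b-1}$ minus a perfect matching), which has $2b-2$ vertices; thus $R_{\mathcal{X}}(a,b)=2b-1$ for all $a\geq 3$, and consequently $\max\{R_{\mathcal{X}}(a,b),R_{\mathcal{Y}}(a,b)\}=\max\{2a-1,2b-1\}$. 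For $\mathcal{Z}$, Lemma~\ref{lem:finite-d} gives $R_{\mathcal{Z}}(a,b)\leq 10$ always, and for $a,b\geq 4$ the $3\times 3$ rook's graph of that lemma, which is self-complementary with $\omega=\alpha=3$, contains neither a $K_a$ nor a $\overline{K}_b$, so $R_{\mathcal{Z}}(a,b)=10$. Combining, for $a,b\geq 4$ we get $R_A(a,b)=\max\{2a-1,2b-1,10\}$, which equals $10$ exactly when $a,b\leq 5$ and equals $\max\{2a-1,2b-1\}$ otherwise — precisely the claimed values in that range (the extremal graphs, i.e.\ the balanced simplex, its complement, and the rook's graph, also supply the matching lower bounds).

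It remains to treat $\min\{a,b\}=3$; say $a=3$, the case $b=3$ following since $R_A$ is invariant under complementation and hence symmetric in its arguments. Now $\omega(G)<3$ forces $G$ to be triangle-free, so the extra ingredient is the observation that a triangle-free co-diamond-free graph is either bipartite or is $C_5$ itself. To prove this, note that a shortest odd cycle is induced; if it had length $\ell\geq 7$, its vertices $v_1,\dots,v_\ell$ would contain the induced co-diamond $\{v_1,v_2,v_4,v_6\}$, so any odd cycle forces an induced $C_5$ on vertices $v_1,\dots,v_5$; and any further vertex $x$, whose neighbourhood inside this $C_5$ is independent and hence of size at most $2$, yields (after relabelling the cycle so that $x$ has no neighbour among $v_2,v_4,v_5$) the induced co-diamond $\{x,v_2,v_4,v_5\}$. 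Consequently every triangle-free member of $\mathcal{Z}$ is either $C_5$ (with $\alpha=2$) or a co-diamond-free bipartite graph which, being also $\overline{K}_4$-free, has at most $6$ vertices by Lemma~\ref{lem:cd-bipartite}; this gives $R_{\mathcal{Z}}(3,b)\leq 7\leq 2b-1$ for $b\geq 4$ and $R_{\mathcal{Z}}(3,3)=6$ with $C_5$ extremal. Together with $R_{\mathcal{X}}(3,b)=2b-1$ and $R_{\mathcal{Y}}(3,b)=R_{\mathcal{X}}(b,3)=5$ this yields $R_A(3,b)=2b-1$ for $b\geq 4$ and $R_A(3,3)=6$, which completes the case analysis.

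The main obstacle I anticipate is exactly this last case $\min\{a,b\}=3$: Lemma~\ref{lem:cd-bipartite} controls only the bipartite part of $\mathcal{Z}$, so the ad hoc structural observation above is needed to pin down the non-bipartite triangle-free members of $A$. One must also be somewhat careful with small degenerate cases when reading off independence numbers of simplices, and when comparing $R_{\mathcal{X}}$, $R_{\mathcal{Y}}$ and $R_{\mathcal{Z}}$ near the thresholds $a,b\in\{3,4,5\}$ — these comparisons are exactly what produces the two exceptions stated in the theorem.
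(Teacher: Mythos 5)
Your proposal is correct and follows essentially the same route as the paper: the decomposition via Lemma~\ref{lem:k4-d} into co-diamond-free bipartite graphs, their complements, and the finite class of Lemma~\ref{lem:finite-d}, with the same case analysis and the same extremal examples ($K_{b-1,b-1}$-type simplices and the rook's graph). Your explicit argument that triangle-free co-diamond-free graphs are bipartite or $C_5$ is a nice self-contained replacement for the paper's appeal to ``direct inspection'' in bounding $R_{\mathcal{Z}}(3,b)$; the only slip is the aside that graphs in $\mathcal{X}\cup\mathcal{Z}$ have clique number below $a$ whenever $a\ge 3$ (false for $\mathcal{Z}$ at $a=3$, e.g.\ the rook's graph has clique number $3$), but this is harmless since you handle $a=3$ separately and correctly.
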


\begin{proof}
	According to Lemma~\ref{lem:k4-d}, in order to determine the value of $R_A(a,b)$, we analyze this number in three classes: 
	\begin{itemize}
		\item the class $X$ of co-diamond-free bipartite graphs, 
		\item the class $Y$ of the complements of graphs in $X$ and 
		\item the class $Z$ of (diamond,co-diamond,$K_4,\overline{K}_4$)-free graphs.
	\end{itemize}
	In the class $X$ of co-diamond-free bipartite graphs, $R_X(a,b)=2b-1$, since every graph in this class with at least $2b-1$ contains an independent set of size $b$, while the graph $K_{b-1,b-1}$
	contains neither an independent set of size $b$ nor a clique of size $a\ge 3$. Similarly, $R_{Y}(a,b)=2a-1$.
	
	In the class $Z$ of (diamond,co-diamond,$K_4,\overline{K}_4$)-free graphs, for all $a,b\ge 4$ we have  $R_Z(a,b)=10$  by Lemma~\ref{lem:finite-d}.
	Moreover, if additionally $\max \{a,b\}\ge 6$, then $R_Z(a,b)<\max \{ R_X(a,b), R_{Y}(a,b) \}$.
	For $a,b\in\{4,5\}$, we have $R_Z(a,b)=10>\max \{ R_X(a,b), R_Y(a,b) \}$. Also, $R_Z(3,3)=6$ (since $C_5\in Z$) and hence $R_Z(3,3)>\max \{ R_X(3,3), R_Y(3,3) \}$.
	Finally, by direct inspection one can verify that $Z$ contains no $K_3$-free graphs with more than 6 vertices and hence for $b\ge 4$ we have $R_Z(3,b)\le R_X(3,b)$.
	Similarly, for $a\ge 4$ we have $R_Z(a,3)\le R_{Y}(a,3)$. Thus for all values of $a,b\ge 3$, we have $R_A(a,b)=\max \{ 2a-1,2b-1 \}$, 
	unless $a,b\in\{4,5\}$, in which case $R_A(a,b)=10$, and unless $a=b=3$, in which case $R_A(a,b)=6$. 
	 \end{proof}


\subsection{$2K_2$- and $C_4$-free graphs}


\begin{theorem}\label{thm:2k2}
	For the class $A$ of $(2K_2,C_4)$-free graphs and all $a,b\ge 3$, 
	$$R_A(a,b)=a+b.$$
\end{theorem}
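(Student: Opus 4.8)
The plan is to prove $R_A(a,b)\le a+b$ and $R_A(a,b)\ge a+b$ separately, with $C_5$ playing the central role in both directions — unsurprisingly, since it is essentially the only $(2K_2,C_4)$-free graph that fails to be a split graph. For the upper bound I would first establish a structural fact. A $(2K_2,C_4)$-free graph with no induced $C_5$ is $(2K_2,C_4,C_5)$-free, hence a split graph; so suppose $G\in A$ has an induced $C_5$ on vertices $v_1,\dots,v_5$ (in cyclic order). I claim that every other vertex $w$ is complete or anticomplete to $\{v_1,\dots,v_5\}$: this follows from a short case analysis on $S:=N(w)\cap\{v_1,\dots,v_5\}$ (using the dihedral symmetry of $C_5$), where in each case $1\le|S|\le 4$ one exhibits four vertices of $\{w,v_1,\dots,v_5\}$ inducing a $2K_2$ or a $C_4$ — for instance $\{w,v_1,v_3,v_4\}$ induces $2K_2$ if $S=\{v_1\}$, and $\{w,v_1,v_2,v_3\}$ induces $C_4$ if $S=\{v_1,v_3\}$. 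Two further one-line checks show that the set $K$ of vertices complete to the $C_5$ induces a clique (otherwise some $\{w_1,v_1,w_2,v_3\}$ induces $C_4$) and the set $I$ of vertices anticomplete to it is independent (otherwise some $\{w_1,w_2,v_1,v_2\}$ induces $2K_2$). Thus $V(G)=C\cup K\cup I$, with $C$ inducing $C_5$, $K$ a clique complete to $C$, and $I$ an independent set anticomplete to $C$.

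From this I obtain $|V(G)|\le\omega(G)+\alpha(G)+1$ for every $G\in A$: when $G$ is split this is immediate from its clique/stable partition (in fact $|V(G)|\le\omega(G)+\alpha(G)$), and when $G$ has the $C_5$-structure above, $K$ together with an edge of $C$ forms a clique of size $|K|+2$ while $I$ together with a non-edge of $C$ forms a stable set of size $|I|+2$, whence $|V(G)|=|K|+|I|+5\le(\omega(G)-2)+(\alpha(G)-2)+5=\omega(G)+\alpha(G)+1$. Consequently, if some $G\in A$ had at least $a+b$ vertices yet neither a clique of size $a$ nor an independent set of size $b$, then $a+b\le|V(G)|\le(a-1)+(b-1)+1=a+b-1$, a contradiction; hence $R_A(a,b)\le a+b$.

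For the matching lower bound I would exhibit, for all $a,b\ge 3$, a graph $G_0\in A$ on $a+b-1$ vertices with $\omega(G_0)=a-1$ and $\alpha(G_0)=b-1$: take an induced $C_5$, add $a-3$ new vertices forming a clique complete to the $C_5$, and then add $b-3$ isolated vertices (either added set being empty when $a=3$ or $b=3$). Then $|V(G_0)|=5+(a-3)+(b-3)=a+b-1$, the clique number is $\omega(C_5)+(a-3)=a-1$, and the independence number is $\alpha(C_5)+(b-3)=b-1$ (a largest stable set uses two vertices of the $C_5$ and all $b-3$ isolated vertices, while a clique vertex is useless there). Finally $G_0\in A$: an induced $C_4$, being connected, would lie in the component formed by the $C_5$ and the clique vertices, but every clique vertex is adjacent to everything in that component and hence cannot lie on an induced $C_4$, so the $C_4$ would be inside the $C_5$ — impossible; similarly an induced $2K_2$ has both its edges disjoint from the isolated vertices and, by the same observation, both inside the $C_5$ — impossible. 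So $R_A(a,b)>a+b-1$, and together with the upper bound $R_A(a,b)=a+b$.

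The crux is the structural claim that external vertices of an induced $C_5$ are complete or anticomplete to it. It is merely a finite case check, but it needs care: for $|S|\in\{3,4\}$ the forbidden configuration one must point to is a $C_4$ rather than a $2K_2$, so the most obvious four-vertex subset does not always work. One could also simply invoke the known characterisation of $(2K_2,C_4)$-free graphs as pseudo-split graphs and read off the partition $C\cup K\cup I$ directly.
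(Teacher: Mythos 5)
Your proof is correct and follows essentially the same route as the paper: split $(2K_2,C_4)$-free graphs into the $C_5$-free case (split graphs) and the case containing a $C_5$, where the remaining vertices form a clique complete to the cycle together with an independent set anticomplete to it, and then count. The only differences are cosmetic — you prove the pseudo-split decomposition by hand where the paper cites it, and your extremal graph makes the $b-3$ independent vertices isolated rather than complete to the clique part, which works equally well.
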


\begin{proof}
	Let $G$ be a ($2K_2,C_4$)-free graph with $a+b$ vertices. If, in addition, $G$ is $C_5$-free, then the three forbidden induced subgraphs ensures that $G$ belongs to the class of split graphs and hence it contains either 
	a clique of size $a$ or an independent set of size $b$. 
	
	If $G$ contains a $C_5$, then the remaining vertices of the graph can be partitioned into a clique  $U$, whose vertices are complete to the cycle $C_5$, 
	and an independent set $W$, whose vertices are anticomplete to the $C_5$ \cite{pseudo-split}.  We have $|U|+|W|=a+b-5$ and hence either $|U|\ge a-2$ or $|W|\ge b-2$. In the first case, 
	$U$ together with any two adjacent vertices of the cycle $C_5$ create a clique of size $a$. In the second case, $W$ together with any two non-adjacent vertices of 
	the cycle create an independent set of size $b$. This shows that  $R_A(a,b)\le a+b$.
	
	For the inverse inequality, we construct a graph $G$ with $a+b-1$ vertices as follows: $G$ consists of a cycle $C_5$, an independent set $W$ of size $b-3$ anticomplete to the cycle
	and a clique $U$ of size $a-3$ complete to both $W$ and $V(C_5)$. It is not difficult to see that the size of a maximum clique in $G$ is $a-1$ and the size of a maximum independent set  in $G$ is $b-1$.
	Therefore, $R_A(a,b)\ge a+b$.
	 \end{proof}


\subsection{$2K_2$- and diamond-free graphs}

To analyze this class, we split it into three subclasses $X, Y$ and $Z$ as follows: 
\begin{itemize}
\item[$X$] is the class of $(2K_2$,diamond)-free graphs containing a $K_4$,
\item[$Y$] is the class of $(2K_2$,diamond)-free graphs that do not contain a $K_4$ but contain a $K_3$, 
\item[$Z$] is the class of $(2K_2$,diamond)-free graphs that do not contain a $K_3$, i.e., the class of $(2K_2,K_3)$-free graphs. 
\end{itemize}
We start by characterizing graphs in the class $X$.

\begin{lemma}
	If a $(2K_2$,diamond$)$-free graph $G$ contains a $K_4$, then $G$ is a split graph 
	partitionable into a clique $C$ and an independent set $I$ such that every vertex of $I$ has at most one neighbour in $C$.
\end{lemma}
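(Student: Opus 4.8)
The plan is to choose $C$ to be a maximal clique of $G$ that contains a copy of $K_4$ — such a clique exists since $G$ contains a $K_4$ by hypothesis, and then $|C|\ge 4$ — and to set $I=V(G)\setminus C$. It then suffices to check the two required properties: that every vertex of $I$ has at most one neighbour in $C$, and that $I$ is an independent set. Before doing so I would record the standard consequence of diamond-freeness: for every edge $uv$ of $G$ the common neighbourhood $N(u)\cap N(v)$ induces a clique, since otherwise $u$, $v$ and two non-adjacent common neighbours would induce a diamond.

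First I would show that every $x\in I$ has at most one neighbour in $C$. Suppose not, and let $c_1,c_2\in C$ both be neighbours of $x$. Applying the observation above to the edge $c_1c_2$, the set $N(c_1)\cap N(c_2)$ is a clique; it contains $x$ as well as every vertex of $C\setminus\{c_1,c_2\}$, so $\{x\}\cup\bigl(C\setminus\{c_1,c_2\}\bigr)$ is a clique. In particular $x$ is adjacent to all of $C\setminus\{c_1,c_2\}$ and, being also adjacent to $c_1$ and $c_2$, to all of $C$. Then $C\cup\{x\}$ is a clique strictly larger than $C$, contradicting the maximality of $C$.

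Next I would show $I$ is independent. Assume for contradiction that $x,y\in I$ are adjacent. By the previous paragraph each of $x$ and $y$ has at most one neighbour in $C$, so at most two vertices of $C$ are adjacent to $x$ or to $y$; since $|C|\ge 4$, we may pick $c_1,c_2\in C$ adjacent to neither $x$ nor $y$. Then $\{x,y,c_1,c_2\}$ induces a $2K_2$ (with edges $xy$ and $c_1c_2$ and no edges between the two pairs), contradicting the $2K_2$-freeness of $G$. Hence $I$ is independent, and $(C,I)$ is the required split partition.

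I do not expect a genuine obstacle here: the whole argument is a short case analysis, and the only points that need care are to take $C$ to be a \emph{maximal} clique containing a $K_4$ rather than an arbitrary $K_4$ — this is what lets the first step reach a contradiction — and to note that $|C|\ge 4$ is exactly what is used in the second step to still find two vertices of $C$ lying outside the (at most two-element) set of neighbours of $x$ and $y$ in $C$.
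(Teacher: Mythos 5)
Your proof is correct and follows essentially the same route as the paper: extend the $K_4$ to a maximal clique $C$, show each vertex outside has at most one neighbour in $C$ (the paper gets the diamond directly from two neighbours plus the non-neighbour guaranteed by maximality, while you phrase the same fact via the common-neighbourhood-of-an-edge observation), and then find a $2K_2$ if two outside vertices were adjacent. The second step is word-for-word the paper's argument, so there is nothing to add.
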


\begin{proof}
	Let $G$ be a $(2K_2$,diamond)-free graph containing a $K_4$. We extend the $K_4$ to any maximal (with respect to inclusion) clique and denote it by $C$.
	Also, denote $I=V(G)-C$.
	
	Assume a vertex $a\in I$ has two neighbours $b,c$ in $C$. It also has a non-neighbour $d$ in $C$ (else $C$ is not maximal). But then $a,b,c,d$
	induce a diamond. This contradiction shows that any vertex of $I$ has at most one neighbour in $C$. 
	
	Finally, assume two vertices $a,b\in I$ are adjacent. Since each of them has at most one neighbour in $C$ and $|C|\ge 4$, there are two vertices $c,d\in C$
	adjacent neither to $a$ nor to $b$. But then $a,b,c,d$ induce a $2K_2$. This contradiction shows that $I$ is independent and completes the proof.  
	 \end{proof}



In order to characterize graphs in $Z$, let us say that $G^*$ is an extended $G$ (also known as a blow-up of $G$) if  $G^*$ is obtained from $G$ by replacing the vertices of $G$ with independent sets.

\begin{lemma} \label{lem:2K2-K3}
	If $G$ is a $(2K_2,K_3)$-free graph, then it is either bipartite or an extended $C_5+K_1$.
\end{lemma}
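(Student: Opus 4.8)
The plan is to analyze the structure of a $(2K_2, K_3)$-free graph $G$ directly. First I would handle the trivial case: if $G$ is triangle-free and also contains no $C_5$, then I claim $G$ is bipartite. Indeed, $G$ is $\{K_3, 2K_2\}$-free, and a short case analysis shows a $\{K_3, 2K_2\}$-free graph with no induced $C_5$ contains no odd cycle at all (any shortest odd cycle of length $\geq 7$ contains an induced $2K_2$, and $C_5$ is excluded by hypothesis, $C_3$ by $K_3$-freeness), hence $G$ is bipartite.

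Next I would treat the case where $G$ contains an induced $C_5$, say on vertices $v_1, \dots, v_5$ in cyclic order. The goal is to show $G$ is an extended $C_5 + K_1$. Take an arbitrary vertex $x \notin \{v_1, \dots, v_5\}$ and analyze its neighbourhood among the $v_i$. Since $G$ is $K_3$-free, $x$ is adjacent to no two consecutive $v_i$; since $G$ is $2K_2$-free, $x$ cannot be anticomplete to any edge of the $C_5$ — but every two non-consecutive vertices of $C_5$ are, with respect to the cycle, connected in a way that pins down the structure. Working this out: the only options for $N(x) \cap \{v_1,\dots,v_5\}$ that avoid both a triangle and a $2K_2$ with some cycle edge are either exactly one $v_i$, exactly two non-consecutive $v_i$, or the empty set (the last only if $x$ will end up being the ``$+K_1$'' vertex). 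A careful enumeration shows that a vertex adjacent to exactly one $v_i$ creates a $2K_2$ with the opposite edge of the cycle, so that case is impossible; a vertex adjacent to exactly two non-consecutive $v_i$, say $v_1$ and $v_3$, behaves like a ``copy'' of $v_2$; and an isolated-from-the-cycle vertex behaves like the $K_1$.

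Then I would promote this to a global partition: define $V_i$ to be the set of vertices whose neighbourhood in the $C_5$ is exactly $\{v_{i-1}, v_{i+1}\}$ (indices mod $5$), together with $v_i$ itself, and let $V_0$ be the set of vertices anticomplete to the cycle. One checks $\{V_0, V_1, \dots, V_5\}$ partitions $V(G)$. It remains to verify: (i) each $V_i$ with $i \geq 1$ is independent — two adjacent vertices in $V_i$ together with $v_{i+2}, v_{i+3}$ (or similar) would give a $2K_2$ or a triangle; (ii) $V_i$ is complete to $V_{i+1}$ and anticomplete to $V_{i+2}$ — again forced by $2K_2$- and $K_3$-freeness using the cycle vertices as witnesses; (iii) $V_0$ is independent and anticomplete to everything — a vertex of $V_0$ with a neighbour elsewhere, combined with a suitable cycle edge, produces a $2K_2$. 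These checks establish that $G$ is exactly an extended $C_5$ with a (possibly empty) extra independent module $V_0$, i.e.\ an extended $C_5 + K_1$ (allowing empty parts).

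The main obstacle is the case analysis in the middle step: determining precisely which neighbourhood patterns a vertex $x$ can have on the induced $C_5$, and ruling out the ``mixed'' cases (e.g.\ a vertex adjacent to exactly one cycle vertex, or to two consecutive ones). This is the crux because once every vertex is shown to ``behave like'' one of the six modules $v_0, v_1, \dots, v_5$, the global structure follows by routine applications of the forbidden-subgraph conditions. One subtlety to be careful about is edges between two vertices $x, y$ that sit in non-adjacent modules $V_i, V_j$ but whose cycle-neighbourhoods overlap; the $2K_2$-freeness is what forbids all the ``wrong'' adjacencies, and I would make sure to always exhibit the forbidden $2K_2$ or $K_3$ explicitly using two cycle vertices plus $x$ and $y$.
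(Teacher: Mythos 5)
Your proposal is correct and follows essentially the same route as the paper's proof: handle the $C_5$-free case via the observation that longer odd cycles contain an induced $2K_2$, then classify the cycle-neighbourhoods of outside vertices (either anticomplete or exactly two neighbours at distance two), and verify the module structure of the resulting extended $C_5+K_1$. The case analysis you flag as the crux is exactly the one the paper carries out, and your witnesses for ruling out the bad cases (the opposite cycle edge for a single neighbour, the shared cycle neighbour for adjacency within or between non-consecutive modules) are the right ones.
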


\begin{proof}
	If $G$ is $C_5$-free, then it is bipartite, because any cycle of length at least 7 contains an induced $2K_2$. Assume now that $G$ contains a $C_5$ induced by a set $S=\{v_0,v_1,v_2,v_3,v_4\}$.
	To avoid an induced $2K_2$ or $K_3$, any vertex $u\not\in S$ must be either anticomplete to $S$ or have exactly two neighbours on the cycle of distance 2 from each other,
	i.e., $N(u)\cap S=\{v_{i-1},v_{i+1}\}$ for some $i$ (addition is taken modulo 5). Moreover, if $N(u)\cap S=\{v_{i-1},v_{i+1}\}$ and $N(w)\cap S=\{v_{j-1},v_{j+1}\}$, then 
	\begin{itemize}
		\item if $i=j$ or $|i-j|>1$, then $u$ is not adjacent to $w$, since $G$ is $K_3$-free.
		\item if $|i-j|=1$, then $u$ is adjacent to $w$, since $G$ is $2K_2$-free.
	\end{itemize}
	Clearly, every vertex $u\not\in S$, which is anticomplete to $S$, is isolated, and hence $G$ is an extended $C_5+K_1$.
	 \end{proof}

Now we turn to graphs $G$ in the class $Y$ and characterize them through a series of claims.

\begin{itemize}
\item[(1)] 
{\it Any two triangles in $G$ are vertex disjoint}. To see this, note that two triangles intersecting in two vertices 
induce either a $K_4$ or a diamond. If two triangles induced by say $x_1, y_1, z$ and $x_2, y_2, z$ intersect in a single vertex, 
there must be another edge between them, say $x_1x_2$, since otherwise we obtain an induced $2K_2$. But then $x_1, x_2, y_1, z$ induce two triangles intersecting in two vertices.
		
\item[(2)] 
{\it For any edge $xy$ and a triangle $T$ containing neither $x$ nor $y$, $x$ and $y$ each have exactly one neighbour in $T$}. 
Indeed, $x$ and $y$ each have at most one neighbour, since otherwise we obtain two triangles intersecting in two vertices. 
Moreover, if one of them does not have a neighbour, an induced $2K_2$ appears. We note that the neighbours of $x$ and $y$ must be distinct, since otherwise we obtain two triangles intersecting in one vertex. It follows, in particular, that the edges between two triangles form a matching. 
		
		
\item[(3)] 
{\it If $G$ has a triangle $T$, it does not contain an induced $C_5$ vertex disjoint from $T$}. 
To see this, assume that $G$ has a triangle $x, y, z$ and a $C_5$ induced by $v_1, v_2, v_3, v_4, v_5$. 
By (2), each vertex in the $C_5$ has exactly one neighbour in the triangle, and no two consecutive $v_i$ (modulo 5) have the same neighbour in the triangle. 
It follows that up to isomorphism, the edges between the triangle and the $C_5$ are $xv_1$, $yv_2$, $yv_4$, $zv_3$, $zv_5$. But then $x, v_1, v_3, v_4$ induce a $2K_2$.

\item[(4)]
{\it If $G$ contains 3 triangles $T_i$, each induced by $a_i, b_i, c_i, 1 \leq i \leq 3$, then every other vertex in the graph is isolated. In particular, $G$ contains at most $3$ triangles.}
Without loss of generality, using Claim~(1) and by symmetry, the edges between the triangles are given by $a_ib_j$, $b_ic_j$, $c_ia_j$ with $i \leq j$. 
Suppose for a contradiction that $x$ is a non-isolated vertex not in the $T_i$. Then $x$ has exactly one neighbour in each of the triangles. 
Indeed, by Claim~(1), it has at most one neighbour in each triangle, and if it has a neighbour anywhere in the graph, Claim~(2) applies.  
Without loss of generality, suppose the neighbour of $x$ in $T_2$ is $b_2$. Then $x$ must be adjacent to exactly one of $a_3$ and $c_1$, since otherwise $x, b_2, a_3, c_1$ induce a $2K_2$. 
If $x$ is adjacent to $a_3$, then $x, b_2, a_2, b_3, a_3$ induce a $C_5$ vertex disjoint from $T_1$, contrary to Claim~(3). 
Similarly, if $x$ is adjacent to $c_1$, then $x, b_2, c_2, b_1, c_1$ induce a $C_5$ vertex disjoint from $T_3$.

\item[(5)]
{\it If $G$ contains exactly 2 triangles $T_1$ and $T_2$ 
and the graph $G'=G-(T_1\cup T_2)$
contains an edge, then $G'$ admits a bipartition $X' \cup Y'$ such that there exist vertices $z_1 \in T_1$ and $z_2 \in T_2$ with 
the property that $X' \cup \{z_1, z_2\}$ and $Y' \cup \{z_1, z_2\}$ are independent sets}. Note $G' \in Z$, and by Claim~(4), it is $C_5$-free. 
It follows from Lemma~\ref{lem:2K2-K3} that $G'$ is a $2K_2$-free bipartite graph. We further split $G'$ into $G'_1$ and $G'_0$, where $G'_0$ consists of the isolated vertices in $G'$, while $G'_1$ contains the rest of the vertices of $G'$.

Note that, since $G$ is $2K_2$-free, $G'_1$ is a connected graph. As this graph contains an edge, by Claim~(2), every vertex of $G'_1$ has exactly one neighbour in each of $T_1$ and $T_2$. By standard structural results on bipartite $2K_2$-free graphs (also known as ``bipartite chain graphs''), each part of $G'_1$ has a dominating vertex, i.e., a vertex adjacent to all the vertices in the opposite part. 
Write $x$ and $y$ for those dominating vertices, and call their respective parts $X''$ and $Y''$. 
Let $y_1$ and $y_2$ be the neighbours of $x$ in $T_1$ and $T_2$ respectively, and similarly let $x_1$ and $x_2$ be the neighbours of $y$ in those triangles. 
By Claim~(1), $x_1 \neq y_1$, $x_2 \neq y_2$, $x_1$ and $x_2$ are not adjacent, and $y_1$ and $y_2$ are also not adjacent. Finally, write $z_1$, $z_2$ for the remaining two vertices in $T_1$ and $T_2$, respectively, and note that $z_1$ and $z_2$ are also not adjacent (otherwise $z_1z_2$ and $xy$ induce a $2K_2$). 
		
Note that any vertex in $X''$ must be adjacent to $y_1$ and to $y_2$: indeed, if for instance $x' \in X''$ is adjacent to $y'_1 \neq y_1$ in $T_1$, 
then $y$ is adjacent to neither of $y_1$ and $y'_1$ (by Claim~(1)), and so $x, x', y, y_1, y'_1$ induce a $C_5$ disjoint from $T_2$, contrary to Claim~(3). 
Similarly, every vertex in $Y''$ is adjacent to $x_1$ and to $x_2$.
		
It remains to deal with the vertices in $G'_0$. Let $w$ be a vertex in $G'_0$. Note that $w$ is non-adjacent to both $z_1$ and $z_2$, since any such edge together with $xy$ would induce a $2K_2$. Then  $X'' \cup G'_0  \cup \{z_1, z_2\}$ and $Y'' \cup \{z_1, z_2\}$ are independent sets as claimed.
%
%
%
\end{itemize}

\begin{theorem}
	Let  $A$ be the class of $(2K_2$,diamond$)$-free graphs. Then  
	\begin{itemize}
		\item for $a=3$, we have $R_A(a,b)=\lfloor 2.5(b-1)\rfloor+1$;
		\item for $a=4$, we have $R_A(a, 3) = 7$, $R_A(a, 4) = 10$ and $R_A(a,b)=\lfloor 2.5(b-1)\rfloor+1$ for $b \geq 5$;
		\item for $a\ge 5$, we have $R_A(a,b) = \max \{ \lfloor 2.5(b-1)\rfloor+1, a + b - 1 \}$, except for $R_A(5, 4) = 10$.  
	\end{itemize} 
\end{theorem}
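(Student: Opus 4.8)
The plan is to compute $R_A(a,b)$ by combining the structural analysis of the three subclasses $X$, $Y$, $Z$ already set up in the excerpt, using the identity
$$R_A(a,b)=\max\{R_X(a,b),R_Y(a,b),R_Z(a,b)\},$$
valid because every $(2K_2,\text{diamond})$-free graph lies in one of the three classes (a graph either contains a $K_4$, or contains a $K_3$ but no $K_4$, or contains no $K_3$). So I would first pin down each of the three Ramsey functions, and then take the pointwise maximum, tracking the finitely many small exceptional pairs where the ``generic'' formula is not attained.

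First, the class $X$ of $(2K_2,\text{diamond})$-free graphs containing a $K_4$: by the lemma just proved, every such $G$ is a split graph partitionable into a clique $C$ and an independent set $I$ with every vertex of $I$ having at most one neighbour in $C$. For a graph on $n$ vertices with clique $C$ of size $c$, one gets $\omega(G)\ge c$ and, since distinct vertices of $I$ attach to distinct vertices of $C$ or to none (a vertex of $I$ has $\le 1$ neighbour in $C$, but two vertices of $I$ can still both be non-adjacent to all of $C$ only if $2K_2$ is avoided — actually at most one isolated-from-$C$ vertex plus one per vertex of $C$ can be pairwise non-adjacent), $\alpha(G)$ is comfortably large once $n$ is large; I expect $R_X(a,b)=a+b-1$ with $K_4$ forcing $a\ge 4$ here, matched by an explicit split graph. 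The class $Y$ of $(2K_2,\text{diamond})$-free graphs with a $K_3$ but no $K_4$: here Claims (1)--(5) do the work. By Claim (4) there are at most three triangles; the cases of three, two, one triangle each give a precise structural description (three triangles plus isolated vertices; two triangles with the bipartite chain structure of Claim (5); one triangle with the rest bipartite by Lemma~\ref{lem:2K2-K3} applied to $G-T$), and in each case I would optimize $\max\{\alpha,\omega\}$ against $n$ to read off $R_Y$. I expect the ``hard'' branch — three triangles, or two triangles with $z_1,z_2$ joinable to both sides — to produce the $\lfloor 2.5(b-1)\rfloor+1$ term (the $2.5$ coming from a $C_5$-like or triangle-packing obstruction balancing independent sets), together with the small exceptions $R_A(4,3)=7$, $R_A(4,4)=10$, $R_A(5,4)=10$. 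Finally, class $Z=(2K_2,K_3)$-free: by Lemma~\ref{lem:2K2-K3} such a graph is bipartite or an extended $C_5+K_1$; a bipartite graph has $\omega\le 2<a$, so $R_Z(a,b)$ is governed by the independence number, and an extended $C_5+K_1$ on $n$ vertices has $\alpha\ge \lceil 2n/5\rceil$-ish, giving the $\lfloor 2.5(b-1)\rfloor+1$ bound with $C_5$ (plus pendant $K_1$) as the extremal construction; the matching lower bound is an explicit extended $C_5+K_1$.

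Having the three functions, I would assemble the answer: for $a=3$ only $Z$ (and the $K_3$-free part, which is vacuous since $a=3$ means we only need to avoid a $K_3$ — so $R_A(3,b)=R_Z(3,b)=\lfloor 2.5(b-1)\rfloor+1$, as $X,Y$ require $K_3$ hence give $R\le 3$-type trivial values). For $a=4$, $X$ contributes $a+b-1=b+3$, which for small $b$ is beaten by the $Z$/$Y$ value $\lfloor 2.5(b-1)\rfloor+1$ only once $b\ge 5$, leaving $R_A(4,3)=7$ and $R_A(4,4)=10$ as genuine $Y$- or $Z$-driven exceptions to verify by hand (small extremal graphs on $6$, resp.\ $9$, vertices). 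For $a\ge 5$, $X$ gives $a+b-1$ and the maximum with $\lfloor 2.5(b-1)\rfloor+1$ is the stated formula, with $R_A(5,4)=10$ the last exception (here $a+b-1=8<10$ and the extended-$C_5+K_1$/two-triangle construction on $9$ vertices with no $K_5$ and no independent $4$-set does better).

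The main obstacle I anticipate is the $a=4$, small-$b$ and $a=5,b=4$ exceptional cases: these are not governed by any clean asymptotic formula and require carefully identifying the extremal graphs (showing, e.g., that some $9$-vertex $(2K_2,\text{diamond})$-free graph has $\omega\le 4$ or $5$ and $\alpha\le 3$, and that $10$ vertices force a $K_4$/$K_5$ or an independent $4$-set). The bipartite-chain-graph bookkeeping in Claim (5) — precisely tracking where $z_1$ and $z_2$ can attach and hence how large an independent set one can extract or how a $2K_2$ is forced — is the other delicate point; I would handle it by splitting on whether $G-(T_1\cup T_2)$ is empty, edgeless, or contains an edge, invoking Claim (5) in the last case and direct $2K_2$-freeness arguments in the first two. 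Everything else is routine optimization of $\max\{\alpha,\omega\}$ over explicitly described graph families, plus exhibiting matching lower-bound constructions (disjoint copies of triangles, extended $C_5+K_1$, and split graphs with bounded attachment).
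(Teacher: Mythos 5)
Your decomposition is the same as the paper's: split $A$ into $X$ (contains $K_4$), $Y$ ($K_3$ but no $K_4$) and $Z$ ($K_3$-free), compute each Ramsey function, and take the maximum, with $Z$ further split into bipartite graphs and extended $C_5+K_1$'s (the latter supplying the $\lfloor 2.5(b-1)\rfloor+1$ term) and $Y$ handled via Claims (1)--(5). The paper executes exactly this plan, obtaining $R_X(a,b)=a+b-1$ for $a\ge 5$, $R_{Z}(a,b)=\lfloor 2.5(b-1)\rfloor+1$, and, by splitting $Y$ according to the number of triangles, $R_{Y_3}(4,b)=b+6$, $R_{Y_2}(4,b)=2b+1$ and $R_{Y_1}(4,b)\le 2b+1$.

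There are, however, two concrete problems with your plan as written. First, and most importantly, the extremal witnesses you name for the exceptional values $R_A(4,4)=R_A(5,4)=10$ (``the extended-$C_5+K_1$/two-triangle construction on $9$ vertices'') do not exist at those parameters: an extended $C_5+K_1$ with independence number at most $3$ has at most $\lfloor 2.5\cdot 3\rfloor=7$ vertices, and the two-triangle construction with $\alpha\le 3$ has at most $2\cdot 4=8$ vertices, so neither certifies a lower bound of $10$. The witness actually needed is the $9$-vertex \emph{three}-triangle configuration from Claim (4), which has $\alpha=\omega=3$; without identifying it, the two $=10$ exceptions (and the matching upper bound $R_{Y_3}(4,b)=b+6$) are unproved. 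Second, a minor point: for $a=4$ the class $X$ does not contribute $a+b-1=b+3$ --- every graph in $X$ already contains a $K_4$, so $R_X(4,b)$ is trivial; the value $2b+1$ governing the $a=4$ regime for small $b$ comes entirely from $Y$ (e.g.\ the prism gives $R_A(4,3)\ge 7$). Beyond these, the $Y$-branch computation is left as ``I would optimize,'' and that is precisely where all the exceptional constants live, so the proposal is a correct outline of the paper's argument rather than a proof.
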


\begin{proof} As before, we split the analysis into several subclasses of $A$.
	
	For the class $X$ of $(2K_2$,diamond)-free graphs containing a $K_4$ and $a\ge 5$, we have $R_X(a,b)=a+b-1$. Indeed, 
	every split graph with $a+b-1$ vertices contains either a clique of size $a$ or an independent set of size $b$ and hence $R_X(a,b)\le a+b-1$.
	On the other hand, the split graph with a clique $C$ of size $a-1$ and an independent set $I$ of size $b-1$ with a matching between $C$ and $I$ belongs to $X$ and hence   
	$R_X(a,b)\ge a+b-1$.
	
	
	For the class $Z_0$ of bipartite $2K_2$-free graphs, we have $R_{Z_0}(a,b)=2b-1$, which is easy to see. 
For the class $Z_1$ of graphs each of which is an extended $C_5+K_1$, we have $R_{Z_1}(a,b)=\lfloor 2.5(b-1)\rfloor+1$.
	For an odd $b$, 
	an extremal graph
	is constructed from a $C_5$ by replacing each vertex with an independent set of size $(b-1)/2$.
	This graph has $\lfloor 2.5(b-1)\rfloor$ vertices, the independence number $b-1$ and the clique number $2<a$.
	For an even $b$, 
	an extremal graph
	is constructed from a $C_5$ by replacing two adjacent vertices of a $C_5$ 
	with independent sets of size $b/2$ and the remaining vertices of the cycle with independent sets of size $b/2-1$.
	This again gives in total $\lfloor 2.5(b-1)\rfloor$ vertices, and the independence number $b-1$. 
	Therefore, in the class $Z=Z_0\cup Z_1$, we have $R_{Z}(a,b)=\max \{ R_{Z_0}(a,b),R_{Z_1}(a,b) \}=\lfloor 2.5(b-1)\rfloor+1$. 
	
	To compute $R_Y(a, b)$, we partition $Y$ into $Y_1$, $Y_2$ and $Y_3$, where $Y_s$ consists of the graphs in $Y$ with $s$ triangles. We then have:
	
	\begin{itemize}
		\item $R_{Y_3}(4, b) = b + 6$ for $b \geq 4$.  Indeed, the three triangle configuration (unique up to isomorphism) has independence number 3, and any additional vertices are isolated by Claim~(4). 
		
		\item $R_{Y_2}(4, b) = 2b + 1$ for $b \geq 3$. To show this, let $G \in Y_2$ be a graph on $2b + 1$ vertices, with triangles $T_1$ and $T_2$. 
As in Claim~(5), $G'=G-(T_1\cup T_2)$, $G'_0$ consists of the isolated vertices in $G'$, while $G'_1$ contains the rest of $G'$.	

If $G'_1$ is empty (or in other words, if $G'$ has no edges), then $G' = G'_0$ is an independent set with $2b + 1 - 6 = 2b - 5$ vertices. Provided $b \geq 5$, this number is at least $b$. 
For $b = 3$, the unique vertex in $G'$ has at most one neighbour in each of $T_1$ and $T_2$, so in particular, it has two non-adjacent non-neighbours in the triangles, hence $G$ has an independent set of size 3. 
For $b = 4$, there are 3 vertices in $G'$. Like before, each of them has at most one neighbour in each triangle; if their neighbourhoods do not cover the triangles, 
then those three vertices together with a common non-neighbour give an independent set of size 4. If their neighbourhoods do cover the triangles, 
then by size constraints the neighbourhoods are disjoint, and each of them is an independent set by Claim~(1). 
In this case, any two of the vertices together with the neighbourhood of the third form an independent set of size 4.
		
Now assume that $G'$ has an edge. Then by Claim~(5) $G'$ admits a bipartition $X' \cup Y'$ such that there exist vertices $z_1 \in T_1$ and $z_2 \in T_2$ 
with the property that $X' \cup \{z_1, z_2\}$ and $Y' \cup \{z_1, z_2\}$ are independent sets. Given such a bipartition, it immediately follows 
that $G$ has an independent set of size at least $\left \lceil \frac{2b - 5}{2} \right \rceil + 2 = b$. 
		
Extremal counterexamples, i.e., graphs without clique of size 4 and without independent sets of size $b$, can be easily constructed, 
by making for instance $G'$ complete bipartite with $b - 3$ vertices in each part and connecting each part to the triangles appropriately.
		
\item $R_{Y_1}(4, b) \leq 2b + 1$ for $b \geq 3$. To see why, let $G \in Y_1$ be a graph on $2b + 1$ vertices, write $T$ for the triangle, and put $G' = G - T$. 
Like before, $G'$ is a $2K_2$-free bipartite graph; if it has isolated vertices, one can find a bipartition of $G'$ where one of the parts has size at least $b$. 
Otherwise, there are vertices $x$ and $y$ dominating each part. Those have neighbours $y'$ and $x'$ in $T$ respectively; but then by Claim~(1), 
$y'$ is a common non-neighbour of the part containing $y$, and $x'$ is a common non-neighbour of the part containing $x$. 
Since $G'$ has one part with size at least $b - 1$, this means $G$ contains an independent set of size $b$. 		
	\end{itemize}
Putting these together, we have $R_Y(4, b) = 2b + 1$ for $b \geq 3$, except $R_Y(4, 4) = 10$.

Combining the results for the three classes $X$, $Y$ and $Z$, we obtain the desired conclusion of the theorem.
\end{proof}

\subsection{The class of ($P_4,C_4$,co-claw)-free graphs}

We start with a lemma characterizing the structure of graphs in this class, where we use the following well-known fact (see, e.g., \cite{cographs}):
every $P_4$-free graph with at least two vertices is either disconnected or the complement to a disconnected graph. 

\begin{lemma}\label{lem:P4C4coclaw}
	Every disconnected $(P_4,C_4$,co-claw$)$-free graph is a bipartite graph and every
	connected $(P_4,C_4$,co-claw$)$-free graph consists of a bipartite graph plus a number of dominating
	vertices, i.e., vertices adjacent to all other vertices of the graph.
\end{lemma}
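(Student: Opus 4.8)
The plan is to use the cograph decomposition theorem just recalled, together with the fact that the co-claw is exactly $K_3+K_1$ (a triangle plus an isolated vertex), and to split the argument according to whether the graph is connected.

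First I would treat the disconnected case. Let $G$ be a disconnected $(P_4,C_4,\text{co-claw})$-free graph. I claim $G$ is triangle-free: if some component of $G$ contained a triangle $T$, then $T$ together with any vertex of another component would induce a $K_3+K_1$, that is, a co-claw. So every component $C$ of $G$ is a connected, triangle-free, $P_4$-free graph, and I would show each such $C$ is complete bipartite. Indeed, if $|V(C)|\ge 2$ then, since $C$ is connected, the quoted theorem gives that $\overline C$ is disconnected, so $C$ is a join of two nonempty induced subgraphs $C_1$ and $C_2$ (all edges between them present); an edge inside $C_1$ together with any vertex of $C_2$ would form a triangle, so both $C_1$ and $C_2$ are edgeless and $C$ is complete bipartite. (Alternatively, cographs are perfect, so a triangle-free cograph has chromatic number at most $2$.) Hence $G$ is bipartite.

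Next, the connected case. Let $G$ be connected in the class, let $D$ be the set of \emph{all} dominating vertices of $G$, and set $H=G-D$. Each vertex of $D$ is adjacent to every other vertex of $G$, so $G$ is exactly $H$ with the dominating vertices $D$ added back; it therefore suffices to prove that $H$ is bipartite. First, $H$ has no dominating vertex, because a vertex adjacent to all of $V(H)\setminus\{v\}$ is automatically adjacent to all of $D$ as well, hence would dominate $G$ and lie in $D$. I then claim $H$ is empty or disconnected. Suppose otherwise: $H$ is connected with at least two vertices. Being a connected $P_4$-free graph on at least two vertices, $\overline H$ is disconnected, so $H$ is a join of two nonempty induced subgraphs $H_1$ and $H_2$. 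If $H_1$ were a clique --- in particular, if $|V(H_1)|=1$ --- then every vertex of $H_1$ would be adjacent to all of $H_1$ and all of $H_2$, hence dominate $H$, which is impossible. So $H_1$ contains non-adjacent vertices $x,y$ and, symmetrically, $H_2$ contains non-adjacent vertices $z,w$; but then $\{x,z,y,w\}$ induces a $C_4$, contradicting $C_4$-freeness. Thus $H$ is empty, in which case $G$ is a clique (an edgeless, hence bipartite, graph plus its dominating vertices), or $H$ is disconnected, in which case $H$ is bipartite by the disconnected case applied to $H$. In both cases $G$ is a bipartite graph together with a set of dominating vertices, as claimed.

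I expect the connected case to be the main obstacle --- more precisely, the step showing that $G-D$ is disconnected or empty. The point is to rule out a nontrivial connected cograph surviving the deletion of all dominating vertices; this is exactly where $C_4$-freeness enters, and the subtle part is that in the join decomposition $H_1\vee H_2$ of such a leftover graph, neither side can be a clique --- not even a single vertex --- since a clique side would reinstate a dominating vertex of $H$, contradicting the maximal choice of $D$.
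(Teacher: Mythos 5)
Your proof is correct and follows essentially the same route as the paper: the disconnected case via the co-claw forcing triangle-freeness of each component, and the connected case via the cograph decomposition with $C_4$-freeness ruling out two join-parts that each contain a non-edge. Your phrasing of the connected case (delete the set $D$ of all dominating vertices and show the remainder is empty or disconnected) is just a repackaging of the paper's observation that at most one co-component of $\overline{G}$ is a non-singleton, since the singleton co-components are exactly the dominating vertices.
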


\begin{proof}
	Let $G$ be a disconnected ($P_4,C_4$,co-claw)-free graph. Then every connected component of $G$ is $K_3$-free, 
	since a triangle in one of them together with a vertex from any other component create an induced co-claw. 
	Therefore, every connected component of $G$, and hence $G$ itself, is a bipartite graph (since forbidding $P_4$ forbids every cycle of length at least 5).
	
	Now let $G$ be a connected graph. Since $G$ is $P_4$-free, $\overline{G}$ is disconnected. 
	Let $C^1,\ldots,C^k$ $(k\ge 2)$ be co-components of $G$, i.e., components in the complement of $G$. 
	If at least two of them have more than 1 vertex, then an induced $C_4$ arises. 
	Therefore, all co-components, except possibly one, have size 1, i.e., they are dominating vertices in $G$.
	If, say, $C^1$ is a co-component of size more than 1, then the subgraph of $G$ induced by $C^1$ must be disconnected 
	and hence it is a bipartite graph.     
\end{proof}

%
%

\begin{theorem}
	For the class $A$ of ($P_4,C_4$,co-claw)-free graphs and all $a,b\ge 3$, 
	$$R_A(a,b)=a+2b-4.$$
\end{theorem}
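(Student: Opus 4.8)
The plan is to use the structural description from Lemma~\ref{lem:P4C4coclaw}, which says every graph in $A$ is either (i) a bipartite graph, or (ii) a bipartite graph $B$ together with some number $d \geq 0$ of dominating vertices. First I would dispose of the lower bound by exhibiting an explicit $(P_4,C_4,\text{co-claw})$-free graph on $a + 2b - 5$ vertices with no clique of size $a$ and no independent set of size $b$: take $a - 2$ dominating vertices attached to a bipartite graph on $2(b-1)$ vertices that itself has independence number exactly $b - 1$ and no edges between the parts that would create a $C_4$ — the safe choice is $(b-1)K_2$, i.e.\ a perfect matching on $2(b-1)$ vertices, which is $C_4$-free and $P_4$-free and has clique number $2$, independence number $b-1$. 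Adding $a-2$ dominating vertices gives clique number $(a-2) + 2 = a$; wait, that is too big, so instead I would attach $a - 3$ dominating vertices to get clique number $a - 1$, for a total of $(a-3) + 2(b-1) = a + 2b - 5$ vertices, clique number $a - 1$, independence number $b - 1$ (the dominating vertices cannot extend an independent set). That realizes the bound $R_A(a,b) \geq a + 2b - 4$.

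For the upper bound, let $G \in A$ have $a + 2b - 4$ vertices and suppose $\omega(G) \leq a - 1$ and $\alpha(G) \leq b - 1$; I will derive a contradiction. Using Lemma~\ref{lem:P4C4coclaw}, write $G$ as a bipartite graph $H$ (on vertex set $V(H)$, with parts $P_1, P_2$) plus a set $D$ of $d$ dominating vertices, where $d \geq 0$ (the disconnected case is $d = 0$). The dominating vertices are pairwise adjacent, so $D$ is a clique; moreover any clique of $H$ together with all of $D$ is a clique of $G$, so $\omega(G) = \omega(H) + d$. Since $\omega(H) \leq 2$ (bipartite), and more precisely $\omega(H) = 2$ iff $H$ has an edge, we get $d \leq \omega(G) - 1 \leq a - 2$, hence $|V(H)| = (a + 2b - 4) - d \geq 2b - 2$. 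On the independence side, $\alpha(G) \geq \alpha(H)$ (an independent set of $H$ stays independent in $G$, as dominating vertices are adjacent to everything), so $\alpha(H) \leq b - 1$. But $H$ is bipartite on at least $2b - 2$ vertices, so the larger part has at least $b - 1$ vertices — giving $\alpha(H) \geq b-1$, and we need to push to get a strict contradiction. The key sharpening: I would argue that if $d = a - 2$ exactly (so $H$ has an edge and $|V(H)| = 2b - 2$), then $\alpha(H) = b - 1$ forces both parts of $H$ to have exactly $b - 1$ vertices and $H$ to contain an edge, but then — this is where $C_4$-freeness enters — a bipartite $C_4$-free graph has a vertex-cover / matching structure (it is a forest of "stars-sharing-no-two-neighbours"), and I would show a $C_4$-free bipartite graph on parts of size $b - 1$ each with an edge has an independent set of size $b$ unless... actually the cleanest route is: in a $C_4$-free bipartite graph, two vertices in the same part have at most one common neighbour, so an edge $uv$ with $u \in P_1$ lets us take $\{v\} \cup (P_1 \setminus N(v))$ — and $N(v) \cap P_1 = \emptyset$ since $P_1$ is independent, wait $v \in P_2$ — take $\{v\} \cup (P_2 \setminus N_H(v))$; since $v$ has a neighbour $u \in P_1$, I instead use the matching: a $C_4$-free bipartite graph on $n$ vertices has a maximal independent set of size $> n/2$ unless it is a matching-like object, and one checks directly that to keep $\alpha(H) \le b-1$ with $|V(H)| = 2b-2$, $H$ must be exactly $(b-1)K_2$ or a union of stars whose centres all lie in one part; in the star case that part (the leaves together with ...) gives too large an independent set, and in the $(b-1)K_2$ case $\omega(H) = 2$ so $\omega(G) = a$, contradiction. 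If $d < a - 2$, then $|V(H)| \geq 2b - 1$, so the larger part of $H$ has $\geq b$ vertices, an independent set of size $b$ in $G$, contradiction.

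The main obstacle, then, is the boundary analysis when $d = a - 2$ and $|V(H)| = 2b - 2$: I must show that no $C_4$-free bipartite graph on $2b-2$ vertices simultaneously has an edge (needed so that $\omega(H) = 2$ and thus $\omega(G)$ could reach $a - 1$ without forcing $d$ smaller... actually it always has $\omega(G) = d + \omega(H) \le a-1$, so we must NOT have $\omega(H) = 2$ together with $d = a-2$) — let me restate: if $d = a-2$ we need $\omega(H) = 1$, i.e.\ $H$ edgeless, but then $H$ is an independent set of $2b - 2 \geq b$ vertices (as $b \geq 3$ gives $2b - 2 \geq b$), contradiction. So in fact the only surviving case is $d \leq a - 3$, giving $|V(H)| \geq 2b - 1$ and hence a part of size $\geq b$, done. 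I would double-check the arithmetic $d + \omega(H) = \omega(G) \le a-1$ with $\omega(H) \ge 1$ forcing $d \le a - 2$, and $\omega(H) \ge 2$ (edge present) forcing $d \le a-3$; and that when $H$ is edgeless $|V(H)| = \alpha(H) \le b - 1$ forces $d \ge a + 2b - 4 - (b-1) = a + b - 3 \geq a$ (using $b \geq 3$), contradicting $d \leq a - 2$. So the whole upper bound collapses to these two short case distinctions, and the genuinely delicate point I want to get right is simply tracking the equality cases and confirming $2b - 2 \geq b$ and $a + b - 3 \geq a$ hold for $a, b \geq 3$. Hence $R_A(a,b) = a + 2b - 4$.
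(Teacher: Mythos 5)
Your final argument is correct and follows essentially the same route as the paper: the same extremal graph ($(b-1)K_2$ plus $a-3$ dominating vertices) for the lower bound, and for the upper bound the decomposition from Lemma~\ref{lem:P4C4coclaw} into a bipartite part $H$ plus $d$ dominating vertices, with the count $d+\omega(H)\le a-1$ forcing either $d\le a-3$ (so $|V(H)|\ge 2b-1$ and one part has size $\ge b$) or $H$ edgeless (so $d\ge a+b-3>a-2$, impossible). The long detour about the fine structure of $C_4$-free bipartite graphs is unnecessary, as you yourself conclude --- the clean two-case count at the end is all that is needed and is essentially the paper's argument.
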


\begin{proof}
	Let $G$ be a graph in $A$ with $a+2b-5$ vertices, $2b-2$ of which induce a matching (a 1-regular graph with $b-1$ edges) and the remaining $a-3$ vertices are dominating in $G$. 
	Then $G$ has neither a clique of size $a$ nor an independent set of size $b$. Therefore, $R_A(a,b)\ge a+2b-4.$
	
	Conversely, let $G$ be a graph in $A$ with  $a+2b-4$ vertices. If $G$ is disconnected, then, by Lemma \ref{lem:P4C4coclaw}, 
	it is bipartite and hence at least one part in a bipartition of $G$ has size at least $b$,
	i.e., $G$ contains an independent set of size $b$. If $G$ is connected, denote by $C$ the set of dominating vertices in $G$. If $|C|\ge a-1$, then 
	either $C$ itself (if $|C|\ge a$) or $C$ together with a vertex not in $C$ (if $|C|= a-1$) create a clique of size $a$. 
	So, assume $|C|\le a-2$.  
	Then the graph $G-C$ has at least $2b-2$ vertices and, by Lemma \ref{lem:P4C4coclaw}, it is bipartite.
	If this graph has no independent set of size $b$, 
	then in any bipartition of this graph each part contains exactly $b-1$ vertices, and each vertex has a neighbour in the opposite part. But then $|C|=a-2$ and therefore $C$ together with 
	any two adjacent vertices in $G-C$ create a clique of size $a$.   
	 \end{proof}

\subsection{The class of (co-diamond,paw,claw)-free graphs}

\begin{lemma}\label{lem:co-diamond}
	Let $G$ be a $($co-diamond,paw,claw$)$-free graph. 
	\begin{itemize}
		\item If $G$ is connected, then it is either a path with at most 5 vertices or a cycle with at most 6 vertices or the complement of a graph of vertex degree at most 1.  
		\item If $G$ has two connected components, then either both components are complete graphs or one of the components is a single vertex and the other is the complement of a graph of vertex degree at most 1.
		\item If $G$ has at least 3 connected components, then $G$ is edgeless. 
	\end{itemize}
\end{lemma}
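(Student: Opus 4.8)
The plan is to split the argument according to the number of connected components of $G$: I will treat first the case of at least three components, then two, and finally the case where $G$ is connected; in the connected case, the crucial dichotomy is whether or not $G$ contains a triangle.

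If $G$ has at least three components and some component contains an edge $uv$, then choosing a vertex $w_2$ in a second component and $w_3$ in a third, the set $\{u,v,w_2,w_3\}$ induces a co-diamond; hence $G$ is edgeless. Suppose next that $G$ has exactly two components $C_1$ and $C_2$. If some $C_i$ contains an edge while $C_{3-i}$ contains two non-adjacent vertices, then those four vertices induce a co-diamond; thus, whenever a component contains an edge, the other component is a complete graph. Consequently, either both components contain an edge — in which case both are complete — or one of them, being connected and edgeless, is a single vertex $v$, and it then remains to show that the other component $C$ is the complement of a graph of maximum degree at most $1$.

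The two remaining tasks — analysing the component $C$ above, and analysing $G$ in the connected case — are handled by the same structural analysis of a connected $($co-diamond, paw, claw$)$-free graph $H$ (namely $H=G$ in the connected case, and $H=C$ in the two-component case, where we additionally know that $C$ together with a disjoint isolated vertex is still co-diamond-free). If $H$ contains a triangle, then, $H$ being connected and paw-free, it is complete multipartite by the well-known structure theorem for paw-free graphs, and claw-freeness forces every part to have size at most $2$; equivalently, $\overline{H}$ has maximum degree at most $1$. If $H$ is triangle-free, then claw-freeness forces every vertex to have degree at most $2$ (three pairwise non-adjacent neighbours would induce a claw), so $H$ is a path or a cycle; here co-diamond-freeness bounds the length, since in a path on at least $6$ vertices the first edge $\{v_1,v_2\}$ together with $v_4$ and $v_6$ induces a co-diamond, and in a cycle on at least $7$ vertices the vertices in positions $1,2,4,6$ induce a co-diamond. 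In the two-component case, the extra isolated vertex $v$ sharpens this: for any edge $\{v_1,v_2\}$ of $H$ and any vertex $v_j$ of $H$ non-adjacent to both, $\{v_1,v_2,v_j,v\}$ is a co-diamond, and a short inspection shows this forces $H\in\{K_1,K_2,P_3,C_4\}$, each of which is complete multipartite with all parts of size at most $2$. Combining the triangle and triangle-free subcases yields exactly the stated conclusions: $C$ is the complement of a graph of maximum degree at most $1$, and in the connected case $G$ is a path on at most $5$ vertices, a cycle on at most $6$ vertices, or the complement of a graph of maximum degree at most $1$.

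The only ingredient that is not a routine case check is the structure theorem for connected paw-free graphs (every connected paw-free graph is triangle-free or complete multipartite), which I would either cite or reprove in a line or two. I expect the main bookkeeping obstacle to be the two-component subcase: verifying that the presence of the isolated vertex genuinely collapses all surviving paths and cycles to complete multipartite graphs with parts of size at most $2$, and reconciling this with the ``both components complete'' alternative (the two descriptions overlap, e.g.\ when a single-vertex component is paired with a complete graph). As a sanity check one should also confirm that the listed graphs really do belong to the class — for instance, that complete multipartite graphs with parts of size at most $2$ are co-diamond-free and paw-free — but, since the lemma only asserts a structural description in one direction, this verification is not part of the proof proper.
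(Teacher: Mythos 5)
Your proof is correct and follows essentially the same route as the paper: the same split by number of components, the same use of the paw-free structure theorem plus claw-freeness in the connected case, and the same co-diamond arguments to bound path and cycle lengths. The only cosmetic difference is in the two-component, single-vertex subcase, where the paper notes directly that the nontrivial component is $\overline{P}_3$-free (an induced $K_2+K_1$ plus the isolated vertex would give a co-diamond) and is therefore complete multipartite with parts of size at most $2$, whereas you re-run the path/cycle enumeration with the extra isolated vertex; both yield the same conclusion.
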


\begin{proof}
	Assume first that $G$ is connected. It is known (see, e.g., \cite{paw}) that every connected paw-free graphs is either $K_3$-free or complete multipartite. i.e., $\overline{P}_3$-free.
	If $G$ is $K_3$-free, then together with the claw-freeness of $G$ this implies that $G$ has no vertices of degree more than 2, i.e., $G$ is either a path or a cycle. To avoid an induced co-diamond, 
	a path cannot have  more than 5 vertices  and a cycle cannot have  more than 6 vertices. If $G$ is complete multipartite, then each part has size at most 2, since otherwise an induced claw arises. 
	In other words, the complement of $G$ is a graph of vertex degree at most 1. 
	
	Assume now that $G$ has two connected components. If each of them contains an edge, then both components are cliques, since otherwise two non-adjacent vertices in one of the components with
	two adjacent vertices in the other component create an induced co-diamond. If one of the components is a single vertex, then the other is  $\overline{P}_3$-free (to avoid an induced co-diamond) and 
	hence is the complement of a graph of vertex degree at most 1 (according to the previous paragraph).
	
	Finally, let $G$ have at least 3 connected components. If one of them contains an edge, then this edge together with two vertices from two other components form an induced co-diamond. 
	Therefore, every component of $G$ consists of a single vertex, i.e., $G$ is edgeless. 
	 \end{proof}

\begin{theorem}
	For the class $A$ of $($co-diamond,paw,claw$)$-free graphs  and for all $a,b\ge 3$, 
	\begin{itemize}
		\item[] $R_A(a,3)=2a-1,$
		\item[] $R_A(a,b)=\max \{ 2a,b \}$ for $b\ge 4$,
	\end{itemize}
	except for the following four numbers $R_A(3,3)=6$, $R_A(3,4)=R_A(3,5)=R_A(3,6)=7$.
\end{theorem}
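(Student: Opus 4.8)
The plan is to exploit the structural decomposition of Lemma~\ref{lem:co-diamond}, which partitions the class $A$ into a handful of simple families according to the number of connected components and the internal structure. First I would dispose of the disconnected cases: if $G$ has at least three components it is edgeless, so it contains an independent set of size $b$ as soon as $|V(G)| \ge b$; if $G$ has exactly two components, then either both are cliques --- in which case a graph on $n$ vertices has a clique of size $\lceil n/2 \rceil$, forcing a clique of size $a$ once $n \ge 2a-1$ --- or one component is a single vertex and the other is the complement of a graph of maximum degree $1$, i.e., a disjoint union of at most a few edges and isolated vertices in the complement, which means the component is a complete multipartite graph with all parts of size $\le 2$; such a graph on $m$ vertices has independence number $2$ but clique number $\lceil m/2 \rceil$, and combined with the extra isolated vertex one gets an independent set of size (at most) $3$, so these graphs only matter for $b = 3$ and there contribute the bound $2a-1$.

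The connected case is the heart of the argument. By Lemma~\ref{lem:co-diamond}, a connected $G \in A$ is a path on at most $5$ vertices, a cycle on at most $6$ vertices, or the complement of a graph of maximum degree at most $1$ --- equivalently a complete multipartite graph with every part of size $1$ or $2$. For the path/cycle cases, the graphs are finite in number and bounded in size, so they only affect small values of $a$ and $b$; direct inspection of $P_3, P_4, P_5, C_3, C_4, C_5, C_6$ (and noting $C_3 = K_3$ already gives $\alpha = 1$, $\omega = 3$, while $C_5$ has $\alpha = \omega = 2$, $C_6$ has $\alpha = 3$) pins down exactly when a clique of size $a$ or an independent set of size $b$ is unavoidable; these are the source of the exceptional values $R_A(3,3)=6$ and $R_A(3,4)=R_A(3,5)=R_A(3,6)=7$ (the $C_5$ on $5$ vertices and the $C_6$ on $6$ vertices being the relevant extremal examples). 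For the complete multipartite case with parts of size $\le 2$: such a graph on $n$ vertices with $k$ parts has $\omega = k \ge \lceil n/2 \rceil$ and $\alpha \le 2$, so for $b \ge 3$ it always contains an independent set of size... no --- $\alpha \le 2 < 3 \le b$, so it contains a clique of size $a$ precisely when $\lceil n/2 \rceil \ge a$, i.e., $n \ge 2a-1$; the extremal graph is $aK_2$-complement, namely the complete multipartite graph with $a-1$ parts of size $2$, which has $2a-2$ vertices, clique number $a-1$ and independence number $2$.

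Assembling the pieces: taking the maximum over all subclasses, for $b \ge 4$ the only obstructions of unbounded size come from the two-cliques family (needing $n \ge 2a-1$) and the complete-multipartite family (needing $n \ge 2a-1$), together with the trivial independence-set bound $n \ge b$ from the edgeless and bipartite-component pieces, giving $R_A(a,b) = \max\{2a, b\}$ --- here the value is $2a$ rather than $2a-1$ because one must also verify that no graph in $A$ on $2a-1$ vertices simultaneously avoids a clique of size $a$ \emph{and} an independent set of size $b \ge 4$, which fails exactly at the complete multipartite graph on $2a-1$ vertices with one singleton part (clique number $a$) unless... one instead uses $(a-1)K_2$ plus an isolated vertex, a disconnected graph with two components which is \emph{not} of the allowed two-component form, so the correct extremal graph on $2a-1$ vertices is more subtle and this off-by-one is where I expect the main care to be needed. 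For $b = 3$ the complete-multipartite-plus-vertex family (two components, or connected with $\alpha = 2$) pushes the threshold to $2a-1$, giving $R_A(a,3) = 2a-1$. The main obstacle, then, is not the structural classification --- that is handed to us by Lemma~\ref{lem:co-diamond} --- but the bookkeeping of extremal examples: for each residue of the parameters one must exhibit an $n$-vertex graph in $A$ with $\omega < a$ and $\alpha < b$ for $n$ one below the claimed Ramsey number, and these examples genuinely differ between the $b=3$ and $b \ge 4$ regimes and at the exceptional small values, so the proof is a finite but delicate case analysis rather than a single clean estimate.
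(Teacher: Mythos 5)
Your structural decomposition via Lemma~\ref{lem:co-diamond} is the same one the paper uses, and your treatment of the upper bounds and of the small exceptional cases ($C_5$ giving $R_A(3,3)=6$, $C_6$ giving $R_A(3,b)=7$ for $4\le b\le 6$) is essentially right. But there is a genuine gap at exactly the point you flag as ``subtle'': the lower bound $R_A(a,b)\ge 2a$ for $b\ge 4$. You consider two candidate extremal graphs on $2a-1$ vertices --- a connected complete multipartite graph with a singleton part (rejected because its clique number is $a$) and ``$(a-1)K_2$ plus an isolated vertex'' (rejected because it has $a$ components and so is not in $A$) --- and then leave the matter unresolved. The graph you are missing is $\overline{(a-1)K_2}+K_1$: the \emph{complement} of a perfect matching on $2a-2$ vertices (i.e., the complete multipartite graph with $a-1$ parts of size $2$), together with one isolated vertex. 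This has two components, one of which is a single vertex and the other of which is the complement of a graph of maximum degree $1$, so by the second bullet of Lemma~\ref{lem:co-diamond} it lies in $A$; it has $2a-1$ vertices, clique number $a-1$ and independence number $3$, hence witnesses $R_A(a,b)\ge 2a$ for every $b\ge 4$. Without it the claimed value $\max\{2a,b\}$ is not justified.

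Relatedly, your earlier assertion that the ``singleton plus complete multipartite'' family ``only matters for $b=3$'' is exactly backwards. Those graphs have independence number $3$ (two vertices from a size-$2$ part plus the isolated vertex), so for $b=3$ they already contain an independent set of size $b$ and are never extremal there --- the extremal graph for $R_A(a,3)\ge 2a-1$ is $2K_{a-1}$, two disjoint cliques. It is for $b\ge 4$ that this family bites, and it is precisely what separates the answer $2a$ (for $b\ge 4$) from $2a-1$ (for $b=3$). Supplying this one family of examples closes the gap; everything else in your outline matches the paper's argument.
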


\begin{proof}
	We start with the case $b=3$. Since $C_5$ belongs to $A$, $R_A(3,3)=6$, which covers the first of the four exceptional cases.

	Let $a\ge 4$. The graph $2K_{a-1}$ with $2a-2$ vertices has neither cliques of size $a$ nor independent sets of size $3$, and hence $R_A(a,3)\ge 2a-1$. 
	Conversely, let $G\in A$ be a graph with $2a-1\ge 7$ vertices. If $G$ is connected, then according to Lemma~\ref{lem:co-diamond} $G$ is the complement of a graph of vertex degree at most 1,
	and hence $G$ has a clique of size $a$. If $G$ has two connected components both of which are cliques, then one of them has size at least $a$. 
	If $G$ has two connected components one of which is a single vertex, then either the second component has a couple of non-adjacent vertices, in which case an independent set of size 3 arises, 
	or the second component is a clique of size more than $a$. If $G$ has at least 3 connected components, then it contains an independent set of size more than $3$. 
	Therefore, $R_A(a,3)\le 2a-1$ and hence $R_A(a,3)=2a-1$  for $a\ge 4$.
	
	From now on, $b\ge 4$.  Consider the last three exceptional cases, i.e., let $a = 3$ and $4 \leq b \leq 6$.
		The graph $C_6$ that belongs to our class has neither a clique of size 3 nor an independent set of size $b\ge 4$ and hence $R_A(a,b)\ge 7$ in these cases.
		Conversely, let $G\in A$ be a graph with at least 7 vertices. If $G$ is connected, then it is the complement of a graph of vertex degree at most 1 and hence contains a clique of size 3. If $G$ has two 
		connected components each of which is a clique, then one of them has size at least 3. If $G$ has two components one of which is a single vertex, then the other component has at least 6 vertices and 
		also contains a clique of size 3. If $G$ has at least 3 connected components, then $G$ has an independent set of size $4\le b\le 6$. Therefore, $R_A(a,b) = 7$ for $a = 3$ and $4 \leq b \leq 6$.
	
	In the rest of the proof we assume that either $a\ge 4$ or $b\ge 7$. Denote $m=\max \{ 2a,b \}$. If $m=2a$, then 
	the graph $\overline{(a-1)K_2}+K_1$ with $2a-1$ vertices has neither cliques of size $a$ nor independent sets of size $b\ge 7$. If
	$m=b$, then the edgeless graph with $b-1$ vertices has neither cliques of size $a$ nor independent sets of size $b$. Therefore, $R_A(a,b)\ge m$.
	
	Conversely, let $G$ be a graph with at least  $m\ge 7$ vertices.  If $G$ is connected, then it is the complement of a graph of vertex degree at most 1 and hence contains a clique of size $a$.
	If $G$ has two connected components each of which is a clique, then one of them has size at least $a$. If $G$ has two components one of which is a single vertex, then the other component has at least $2a-1$ vertices and 
	also contains a clique of size $a$. If $G$ has at least 3 connected components, then $G$ has an independent set of size $b$. Therefore, $R_A(a,b) = m$.  
	\end{proof}

\section{Bipartite Ramsey numbers}
\label{sec:bip}

Let $G=(A,B,E)$ be a bipartite graph given together with a bipartition $A\cup B$ of its vertex set into two independent 
sets. We call $A$ and $B$ the parts of $G$. The graph $G$ is {\it complete bipartite},
also known as a {\it biclique}, if every vertex of $A$ is adjacent to every vertex of $B$. A biclique with parts of size $n$ and $m$
is denoted by $K_{n,m}$. By $b(G)$ we denote the biclique number of $G$, i.e., the maximum $p$ such that $G$ contains $K_{p,p}$
as an induced subgraph.

Given a bipartite graph $G=(A,B,E)$, we  denote by $\widetilde{G}$ the bipartite complement of $G$, i.e., the bipartite graph on the same vertex set in which two vertices $a\in A$ and $b\in B$ 
are adjacent if and only if they are not adjacent in $G$. We refer to the bipartite complement of a biclique as {\it co-biclique} and denote by 
$a(G)$  the maximum $q$ such that  $\widetilde{G}$ contains $K_{q,q}$ as an induced subgraph.

The notion of bipartite Ramsey numbers is an adaptation of the notion of Ramsey numbers to bipartite graphs and 
it can be defined as follows.

\begin{definition}\label{def:bipRamNum}
	The bipartite Ramsey number $R^b(p,q)$ is the minimum number $n$ such that 
	for every bipartite  graph $G$ with at least $n$ vertices in each of the parts, $G$ contains $K_{p,p}$, or $\widetilde{G}$ contains $K_{q, q}$.
\end{definition}

It is known (see, e.g., \cite{conlon08}) that $R^b(p, p) \geq 2^{p/2}$, and hence bipartite Ramsey numbers are generally non-linear.
However, similarly to the non-bipartite case, they may become linear when restricted to a specific class $X$ of bipartite graphs. 
We denote bipartite Ramsey numbers restricted to a class $X$ by $R^b_X(p,q)$ and say that
$R^b_X(p,q)$ are {\it linear} in $X$ if there is a constant $k$ such that $R^b_X(p, q) \leq k(p + q)$ for all $p, q$.

Similarly to the non-bipartite case, we will say that graphs in a class $X$ of bipartite graphs  have 
\emph{linear bipartite homogeneous subgraphs} if there exists a constant $c = c(X)$ such that
$\max\{ a(G), b(G) \} \geq c \cdot |V(G)|$ for every $G \in X$.
The following proposition can be proved by analogy with Proposition~\ref{LLRN}.

\begin{proposition}\label{cl:RamNumHom}
Let $X$ be a class of bipartite graphs. Then graphs in $X$ have linear bipartite homogeneous subgraphs if and only if bipartite Ramsey numbers are linear in $X$.
\end{proposition}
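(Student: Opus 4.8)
The statement is the bipartite analog of Proposition~\ref{LLRN}, so the plan is to mimic its proof verbatim, tracking the trivial changes needed to move from $(\alpha,\omega)$ to $(a,b)$ and from the vertex count $|V(G)|$ (now meaning the total number of vertices, i.e., $|A|+|B|$) to the ``size'' of a bipartite graph, while keeping in mind that the definition of $R^b_X(p,q)$ counts vertices \emph{in each part}. The cleanest route is to prove both implications with explicit constants, exactly as in Proposition~\ref{LLRN} with $\delta = 1$.

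\textbf{First implication (homogeneous $\Rightarrow$ linear Ramsey).} Suppose there is a constant $c = c(X)$ with $\max\{a(G),b(G)\} \geq c\cdot|V(G)|$ for every $G \in X$. Let $G = (A,B,E) \in X$ have at least $n$ vertices in each part, so $|V(G)| \geq 2n$. Then $\max\{a(G),b(G)\} \geq 2cn$. Choosing $n = \lceil (p+q)/(2c)\rceil$ forces $\max\{a(G),b(G)\} \geq p+q$, hence either $G$ contains $K_{p,p}$ or $\widetilde{G}$ contains $K_{q,q}$. This gives $R^b_X(p,q) \leq \lceil (p+q)/(2c)\rceil \leq \frac{1}{2c}(p+q) + 1 \leq k(p+q)$ for a suitable constant $k$ (using $p+q \geq 2$). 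Here one should note the minor subtlety that $a(G), b(G)$ are defined as sizes of \emph{balanced} induced (co-)bicliques, so ``$b(G) \geq p+q$'' literally yields a $K_{p+q,p+q}$, which certainly contains $K_{p,p}$; no loss of generality is incurred.

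\textbf{Converse (linear Ramsey $\Rightarrow$ homogeneous).} Suppose $R^b_X(p,q) \leq k(p+q)$ for all $p,q$. Let $G = (A,B,E) \in X$ and put $m = \min\{|A|,|B|\}$; note $|V(G)| \leq 2\max\{|A|,|B|\}$, but to get a clean bound one can restrict attention to a balanced induced subgraph or simply argue with $m$ and the fact that $\max\{a(G),b(G)\}$ is monotone under deleting vertices from the larger part. Take $t$ to be the largest integer with $m \geq k(t+t) = 2kt$; then by the definition of $R^b_X$, $G$ has an induced $K_{t,t}$ or $\widetilde{G}$ has an induced $K_{t,t}$, i.e., $\max\{a(G),b(G)\} \geq t$. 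By maximality of $t$, $m < 2k(t+1)$, so $t > \frac{m}{2k} - 1$, giving $\max\{a(G),b(G)\} \geq \frac{m}{2k} - 1 \geq \frac{m}{4k}$ once $m \geq 4k$ (the finitely many small cases are absorbed into the constant, exactly as the $t=0$ case is handled in Proposition~\ref{LLRN}). Finally, since $m \geq |V(G)|/2$ is false in general but $m$ can be taken to be at least, say, $|V(G)| - |V(G)| = $ --- here one must be slightly careful: if the parts are wildly unbalanced, $|V(G)|$ may greatly exceed $2m$. The fix is to pass to a balanced subgraph: delete vertices from the larger part to make both parts equal to $m$; this does not increase $a(G)$ or $b(G)$ and does not decrease them below the value computed for the subgraph, and one then states the conclusion for the balanced case, which is all that is needed since $a(G), b(G)$ and the very notion of bipartite Ramsey number are insensitive to such padding. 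Equivalently, one simply defines $|V(G)|$ for the purposes of this proposition as $2\min\{|A|,|B|\}$, matching the ``vertices in each part'' convention of Definition~\ref{def:bipRamNum}; then $\max\{a(G),b(G)\} \geq \frac{|V(G)|}{8k}$, and putting $c = \frac{1}{8k}$ concludes the proof.

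\textbf{Main obstacle.} The only real subtlety --- and the reason this is not a completely mechanical transcription --- is bookkeeping about what ``$|V(G)|$'' means for a bipartite graph whose parts may be of very different sizes, since the bipartite Ramsey number is phrased in terms of vertices \emph{per part}. Once one fixes the convention (or reduces to the balanced case by padding/trimming the larger part, which affects neither $a$, $b$, nor $R^b$), the two implications go through exactly as the $\delta = 1$ instance of Proposition~\ref{LLRN}, with all the inequalities identical up to a factor of $2$ coming from counting both parts.
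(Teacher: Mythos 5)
Your proof is correct and is exactly the argument the paper intends: the paper gives no proof of this proposition, saying only that it ``can be proved by analogy with Proposition~1'', and your write-up is that analogy carried out with $\delta=1$, with the right constants in both directions. Your bookkeeping worry about unbalanced parts is moreover a genuine point, not mere pedantry: as literally stated with $|V(G)|=|A|+|B|$, the ``linear Ramsey $\Rightarrow$ linear homogeneous'' direction fails for classes containing very unbalanced graphs (e.g.\ the stars $K_{1,n}$ have bounded bipartite Ramsey numbers, since the per-part quantification ignores them, yet $\max\{a,b\}=1\ll|V|$), so the convention $|V(G)|=2\min\{|A|,|B|\}$ --- or restricting to balanced graphs, which is all the paper ever applies the proposition to --- is indeed needed, and your proof is complete once that convention is fixed.
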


Some classes of bipartite graphs with linear bipartite homogeneous subgraphs have been revealed recently in \cite{ATW19}, 
where the authors consider bipartite graphs that do not contain a fixed bipartite graph as 
an induced subgraph \emph{respecting the parts}. The subgraph containment relation respecting the parts can be thought of as 
the containment of \emph{colored} bipartite graphs, where a colored bipartite graph is a bipartite graph given with a fixed
bipartition of its vertices into two independent sets of black and white vertices. A colored bipartite graph $H$ is said to be
an induced subgraph of a colored bipartite graph $G$ if there exists an isomorphism between $H$ and an induced subgraph of $G$
that preserves colors.

A number of related results appeared also in \cite{KPT19}, where the authors study zero-one matrices that do not contain a fixed matrix as a submatrix.
Primarily, they are interested in forbidden submatrices $P$ that guarantee the existence of a square 
homogeneous submatrix of linear size in matrices avoiding $P$, where homogeneous means a submatrix with all its entries being equal.
The problems studied in \cite{KPT19} can be interpreted as questions about homogeneous bipartite subgraphs in
colored and (\emph{vertex}-)\emph{ordered} bipartite graphs 
which do not contain a fixed forbidden colored and ordered bipartite subgraph.  
In this case, the notion of graph containment must preserve not only colors but also vertex order.

In the next sections, we extend some of the results obtained in \cite{ATW19} using the language of Ramsey numbers.

\subsection{Classes with non-linear bipartite Ramsey numbers}
\label{sec:non-b}


According to Lemma~\ref{lem:E}, classes of graphs without short cycles have non-linear Ramsey numbers.
A similar result holds for bipartite graphs, which can be shown via standard probabilistic arguments. For the sake of completeness,
we provide formal proofs below.
We start with a result, which is an adaptation to the bipartite setting of the classical proof by Erd\H{o}s of the existence of high chromatic number graphs without short cycles.


\begin{lemma}\label{lem:bipWithoutShortCycles}
	Let $k \geq 4$ and $\varepsilon > 0$.
	Then for any sufficiently large $n$, there exists a bipartite graph $G=(A,B,E)$ 
	with $n$ vertices in each of the parts such that $G$ contains no cycles of length at most $k$, and  
	$\widetilde{G}$ contains no $K_{s,s}$ with $s \geq \varepsilon n$.
\end{lemma}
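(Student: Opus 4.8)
The plan is to use the probabilistic deletion method, mirroring Erd\H{o}s's classical argument but carried out in the bipartite setting with bipartite complements playing the role of independent sets. First I would fix $p = p(n)$ to be a suitable negative power of $n$ — something like $p = n^{-1 + 1/k}$ up to constants — and consider the random bipartite graph $G_0 = (A, B, E_0)$ where $|A| = |B| = m$ for an $m$ slightly larger than $n$ (say $m = 2n$), and each of the $m^2$ possible edges between $A$ and $B$ is included independently with probability $p$. I would then estimate two things in expectation: the number $Z$ of short cycles (cycles of length at most $k$) in $G_0$, and the event that $\widetilde{G_0}$ contains a large $K_{s,s}$.

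For the short cycles, a cycle of length $2\ell$ in a bipartite graph uses $\ell$ vertices from each side; the expected number of such cycles is at most $m^{2\ell} p^{2\ell} = (mp)^{2\ell}$ up to a constant depending on $\ell$, and summing over $2 \le \ell \le k/2$ the dominant term is $(mp)^k$ up to constants. With the chosen $p$, $mp \approx (2n)\cdot n^{-1+1/k}$ grows like $n^{1/k}$, so $(mp)^k$ grows like $n$ — we keep the constant in $p$ small enough that $\mathbb{E}[Z] \le m/4$, say, for large $n$. By Markov's inequality, with probability at least $3/4$ we have at most $m/2$ short cycles. For the co-biclique condition: $\widetilde{G_0}$ contains $K_{s,s}$ precisely when there are $s$ vertices in $A$ and $s$ in $B$ with no edges of $G_0$ between them. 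The probability that a fixed such pair of $s$-sets is edge-free is $(1-p)^{s^2} \le e^{-ps^2}$, and there are at most $\binom{m}{s}^2 \le m^{2s}$ such pairs. Choosing $s = \lceil \varepsilon n \rceil$, the relevant bound is $m^{2s} e^{-p s^2}$; since $p s^2 \approx n^{-1+1/k}\varepsilon^2 n^2 = \varepsilon^2 n^{1 + 1/k}$ dominates $2s \log m = O(n \log n)$, this probability tends to $0$, so with probability at least $3/4$ for large $n$ there is no such $K_{s,s}$.

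Hence, for $n$ large, with positive probability $G_0$ has at most $m/2$ short cycles and $\widetilde{G_0}$ has no $K_{s,s}$ with $s \ge \varepsilon n$. Fix such a $G_0$. Now delete one vertex from each short cycle; this removes at most $m/2$ vertices total, but they may be split unevenly between $A$ and $B$, so I would argue that at least $m/2 = n$ vertices survive on each side — this requires $m$ chosen large enough (taking $m = 4n$ or adjusting constants, so that even after removing up to $m/2$ vertices arbitrarily distributed, each side retains at least $n$; more carefully, one removes up to $m/2$ vertices, so some side could lose all of them, meaning we need $m - m/2 = m/2 \ge n$, i.e.\ $m \ge 2n$ suffices only if the deletions are balanced — to be safe I would start with $m$ a bit larger, or note we may assume the cycles deleted are few enough). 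Restricting $G_0$ to $n$ surviving vertices in each part gives a bipartite graph $G$ with no short cycles, and since $\widetilde{G}$ is an induced (bipartite-complement) subgraph of $\widetilde{G_0}$, it still contains no $K_{s,s}$ with $s \ge \varepsilon n$.

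The main obstacle is bookkeeping the two competing requirements against a single choice of $p$: $p$ must be small enough to kill short cycles (forcing $p \lesssim n^{-1+1/k}$) yet large enough that $p s^2 \gg s \log n$ when $s = \Theta(n)$ — and here there is plenty of room since $p s^2 = \Theta(n^{1+1/k})$ comfortably beats $\Theta(n \log n)$, so the argument goes through for every $\varepsilon > 0$ and every $k \ge 4$. A secondary technical point is ensuring the deletion step leaves $n$ vertices on \emph{each} side rather than $n$ total; this is handled by starting with a constant-factor larger balanced bipartite graph and checking the short-cycle count stays a small fraction of $m$, which the expectation bound already gives us room for.
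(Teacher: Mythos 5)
Your proposal is correct and follows essentially the same route as the paper: a random bipartite graph with edge probability a small negative power of $n$, a first-moment bound on short cycles combined with Markov, a union bound of the form $\binom{m}{s}^2(1-p)^{s^2}$ for co-bicliques, and a deletion step. The only cosmetic difference is how the deletion is balanced between the two sides (the paper removes half the cycle-destroying vertices from each part, while you enlarge the starting graph), and the paper's smaller choice of $p$ gives the slightly stronger conclusion that the largest co-biclique is $o(n)$ rather than merely below $\varepsilon n$; both versions prove the lemma as stated.
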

\begin{proof}	
	Let $n$ be a natural number and let $N = 2n$.
	We set $\delta = \frac{1}{2k}$, $p = (2N)^{\delta - 1}$, and consider the random bipartite graph $G(2N,p)$ 
	(i.e., the probability space of bipartite graphs with two parts $A$ and $B$ each of size $N$ such that every 
	pair of vertices $a \in A, b \in B$ is connected by an edge independently with probability $p$).
	
	Let $Y$ be a random variable equal to the number of cycles of length at most $k$ in $G(2N,p)$.
	The number of potential cycles of length $i$ is at most $\frac{1}{2} (i-1)! {2N \choose i} \leq (2N)^i$, and each
	of them is present with probability $p^i$. Hence
	$$
	\mathbb E[Y] \leq \sum_{i=4}^{k} (2N)^i p^i = \sum_{i=4}^{k} (2N)^{\delta i}.
	$$
	Since $(2N)^{\delta i} = o(N)$ for all $i \leq k$, we conclude $\mathbb E[Y] = o(N)$. 
	Hence, for every sufficiently large $N$, we have $\mathbb E[Y] < \frac{N}{2}$, and therefore,  by Markov's inequality,
	$$
		P\left[Y \geq N \right] < \frac{1}{2}.
	$$
	
	Now we estimate the maximum size of a co-biclique in $G(2N,p)$, i.e., $a(G(2N,p))$.
	Let us set $s = \lceil \frac{3}{p} \ln N \rceil$. Then again from Markov's inequality, we have
	$$
		P\left[a(G(2N,p)) \geq s \right] \leq {N \choose s} {N \choose s} (1-p)^{s^2} \leq N^{2s} e^{-ps^2}
		= e^{s(2\ln N -ps)},
	$$
	which tends to zero as $N$ goes to infinity. Thus again, for $N$ sufficiently large, we have 
	$$
		P\left[a(G(2N,p)) \geq s \right] < \frac{1}{2}.
	$$
	
	The above conclusions imply that there exists a graph $G=(A,B,E)$ with $Y < N$ and $a(G) < s$.
	Now we want to destroy all of the $Y$ short cycles by removing one vertex from each of them.
	In order to guarantee that the resulting bipartite graph has many vertices in each of the parts we
	destroy half of the cycles by removing vertices from $A$, and the other half by removing vertices from $B$.
	In this way we remove at most $\frac{N}{2}$ vertices from each of $A$ and $B$,
	and hence we obtain a graph $G' = (A', B', E')$ with at least $\frac{N}{2} = n$ vertices in each of the parts
	such that $G'$ contains neither cycles of length at most $k$, nor the bipartite complement of $K_{s,s}$
	with $s = \lceil \frac{3}{p} \ln N \rceil = \lceil 3 (2N)^{1-\delta} \ln N \rceil = o(N)$.
	By removing some of the vertices from $G'$ we can obtain a bipartite graph with the same properties, but with exactly
	$n$ vertices in each of the parts.
\end{proof}

From this lemma and Proposition~\ref{cl:RamNumHom} we derive the following conclusion. 
 
\begin{cor}\label{cor:shortCycles}
	For every $k \geq 4$, bipartite Ramsey numbers are not linear in the class of bipartite graphs without cycles of length at most $k$.
\end{cor}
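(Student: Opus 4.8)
The plan is to derive Corollary~\ref{cor:shortCycles} directly from Lemma~\ref{lem:bipWithoutShortCycles} and Proposition~\ref{cl:RamNumHom}, in a manner parallel to how Lemma~\ref{lem:E} yields the non-bipartite statement. Fix $k\ge 4$ and let $\mathcal C_k$ denote the class of bipartite graphs containing no cycle of length at most $k$. By Proposition~\ref{cl:RamNumHom}, it suffices to show that graphs in $\mathcal C_k$ do \emph{not} have linear bipartite homogeneous subgraphs, i.e., that there is no constant $c>0$ with $\max\{a(G),b(G)\}\ge c\cdot|V(G)|$ for all $G\in\mathcal C_k$.

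First I would dispose of the biclique side: since $\mathcal C_k$-graphs contain no cycle of length $4$ (as $k\ge4$), they are in particular $C_4$-free, so they contain no $K_{2,2}$, hence $b(G)\le 1$ for every $G\in\mathcal C_k$. Thus $\max\{a(G),b(G)\}=a(G)$ for all sufficiently large $G$ in the class, and the whole question reduces to bounding the co-biclique number $a(G)$ from above on arbitrarily large graphs. This is precisely what Lemma~\ref{lem:bipWithoutShortCycles} supplies: given any would-be constant $c>0$, apply the lemma with this $k$ and with $\varepsilon=c$ to obtain, for all sufficiently large $n$, a bipartite graph $G_n=(A,B,E)$ with $|A|=|B|=n$, with no cycle of length at most $k$ (so $G_n\in\mathcal C_k$), and with $a(G_n)<\varepsilon n=cn$. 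Since $|V(G_n)|=2n$, we get $\max\{a(G_n),b(G_n)\}=a(G_n)<cn<c\cdot|V(G_n)|$, contradicting the linear-homogeneous-subgraph inequality. Hence $\mathcal C_k$ has no such constant, and by Proposition~\ref{cl:RamNumHom} bipartite Ramsey numbers are not linear in $\mathcal C_k$.

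There is essentially no obstacle here: all the probabilistic work has already been done in Lemma~\ref{lem:bipWithoutShortCycles}, and the only thing to be careful about is the quantifier order — the lemma lets us choose $\varepsilon$ in advance, which is exactly what we need to defeat an arbitrary candidate constant $c$. One minor point worth spelling out is that for $n$ large enough we indeed have $|V(G_n)|=2n$ large, so the degenerate small cases where $b(G)$ could momentarily dominate are irrelevant; and if one prefers a cleaner statement, one can note the lemma already forces $a(G_n)=o(n)$, which beats \emph{any} fixed linear rate. That completes the argument.
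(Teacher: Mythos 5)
Your argument is correct and is exactly the paper's intended derivation: the paper simply states that the corollary follows from Lemma~\ref{lem:bipWithoutShortCycles} together with Proposition~\ref{cl:RamNumHom}, and you have filled in precisely those details (choosing $\varepsilon$ to defeat an arbitrary candidate constant $c$, and observing that $C_4$-freeness kills the biclique side). No issues.
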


\begin{theorem}\label{thm:forest}
	Let $X$ be a class of bipartite graphs defined by a finite set $M$ of bipartite forbidden induced subgraphs.
	If $M$ does not contain a forest or the bipartite complement of a forest, then bipartite Ramsey numbers are not linear in $X$.
\end{theorem}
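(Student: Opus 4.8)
The plan is to imitate the proof of Theorem~\ref{thm:onlyif}, using Corollary~\ref{cor:shortCycles} in place of Lemma~\ref{lem:E}. First I would reduce to a single case: by bipartite complementation, linearity of bipartite Ramsey numbers in $X$ is equivalent to linearity in $\widetilde{X}=\{\widetilde{G}:G\in X\}$, because $R^b_{\widetilde{X}}(p,q)=R^b_X(q,p)$ (bipartite complementation interchanges the two roles in Definition~\ref{def:bipRamNum}), and the forbidden colored induced subgraphs of $\widetilde{X}$ are exactly the bipartite complements of those of $X$. Hence if $M$ contains no bipartite complement of a forest, one applies the argument below to $\widetilde{X}$ instead; and if $M=\emptyset$ then $X$ is the class of all bipartite graphs, for which $R^b(p,p)\ge 2^{p/2}$ already rules out linearity. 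So it suffices to treat a nonempty $M$ that contains no forest.

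In that case, the underlying graph of each $H\in M$ contains a cycle, which, since $H$ is bipartite, has even length $\ell_H\ge 4$; set $k=\max\{\ell_H:H\in M\}$, which is finite and at least $4$ because $M$ is finite and nonempty. The next step is to show that the class $Y$ of bipartite graphs having no cycle of length at most $k$ is a subclass of $X$: if a bipartite graph $G$ contained some $H\in M$ as an induced subgraph respecting the parts, then the vertices of a copy of the shortest cycle of $H$ would induce in $G$ a cycle of length $\ell_H\le k$; so every member of $Y$ is $H$-free, respecting the parts, for every $H\in M$, i.e.\ lies in $X$.

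Finally, since $Y\subseteq X$ gives $R^b_{Y}(p,q)\le R^b_X(p,q)$ for all $p,q$, it is enough to know that bipartite Ramsey numbers are not linear in $Y$; and since $k\ge 4$, this is precisely Corollary~\ref{cor:shortCycles}. I expect no genuine obstacle here: the only points requiring care are that containment respecting the parts preserves induced cycles (so colours are irrelevant to the subclass argument) and the bipartite-complementation symmetry of $R^b$; the whole weight of the theorem rests on Corollary~\ref{cor:shortCycles}, equivalently on the probabilistic construction of Lemma~\ref{lem:bipWithoutShortCycles}.
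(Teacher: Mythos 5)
Your proof is correct and follows essentially the same route as the paper: reduce to the no-forest case via bipartite complementation, note that every graph in $M$ then contains an induced (chordless) cycle of bounded length $\le k$, conclude that $X$ contains all bipartite graphs of girth greater than $k$, and invoke Corollary~\ref{cor:shortCycles}. The only differences are cosmetic (you use the girth where the paper uses the largest induced cycle to define $k$, and you explicitly dispose of the $M=\emptyset$ edge case).
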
 
\begin{proof}
If $M$ does not contain a forest, then every graph in $M$ contains a cycle. Let $k$ be the size of a largest induced cycle in graphs in $M$, 
which is a finite number, since $M$ is finite. Then $X$ contains all bipartite graphs without cycles of length at most $k$, and hence 
bipartite Ramsey numbers are not linear in $X$ by Corollary~\ref{cor:shortCycles}.

If $M$ does not contain the bipartite complement of a forest, then bipartite Ramsey numbers are not linear in $X$,
since they are linear in $X$ if and only if they are linear in the class of bipartite complements of graphs in $X$. 
\end{proof}

This result is half analogous to Theorem~\ref{thm:onlyif}. Unfortunately, there is no obvious analog for the second half.
In the non-bipartite case, the second half deals with $P_3$-free graphs and their complements. 
Every $P_3$-free graph consists of disjoint union of cliques, and the most natural analog of this class in the bipartite case 
is the class of $P_4$-free bipartite graphs, which are disjoint union of bicliques. However, bipartite Ramsey numbers
are linear in this class, which is not difficult to see. In the absence of any other natural obstacles for linearity
in the bipartite case, we propose the following conjecture.

\begin{conjecture}\label{conj:forestCoForest}
	Let $X$ be a class of bipartite graphs defined by a finite set $M$ of bipartite forbidden induced subgraphs.
	Then bipartite Ramsey numbers in $X$ are linear if and only if $M$ contains a forest and the bipartite complement of a forest.\footnote{As pointed out by the referees, this conjecture has been solved since we first submitted this article \cite{c4}.}
\end{conjecture}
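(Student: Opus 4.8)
Since the ``only if'' direction is exactly Theorem~\ref{thm:forest}, it remains to prove the ``if'' direction: assuming $M$ contains a forest $F$ and the bipartite complement $\widetilde{F_0}$ of a forest $F_0$, show that $R^b_X(p,q)$ is linear. I would first reduce to a statement about \emph{pure pairs}. By heredity, by deleting excess vertices from the larger part, and by Proposition~\ref{cl:RamNumHom} (or a direct counting argument), it suffices to produce a constant $c=c(F,F_0)>0$ such that every balanced $(F,\widetilde{F_0})$-free bipartite graph $G=(A,B,E)$ with $|A|=|B|=n$ satisfies $\max\{a(G),b(G)\}\ge cn$. Moreover, the disjoint union $H:=F+F_0$ is a forest that contains each of $F$ and $F_0$ as an induced subgraph, so every $(F,\widetilde{F_0})$-free bipartite graph is automatically $(H,\widetilde{H})$-free, and it is enough to prove the claim $(\star)$: for every forest $H$ there is $\varepsilon=\varepsilon(H)>0$ such that every bipartite graph $G=(A,B,E)$ with no induced $H$ and no induced $\widetilde{H}$ (in both cases respecting the bipartition) contains a \emph{pure pair} $(X,Y)$ --- sets $X\subseteq A$ and $Y\subseteq B$ with $X$ complete or anticomplete to $Y$ --- satisfying $|X|\ge\varepsilon|A|$ and $|Y|\ge\varepsilon|B|$. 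Indeed, for a balanced $G$ with $|A|=|B|=n$ this produces $X,Y$ of size at least $\varepsilon n$; if $X$ is complete to $Y$ then $b(G)\ge\varepsilon n$, and if $X$ is anticomplete to $Y$ then $a(G)\ge\varepsilon n$, so $\max\{a(G),b(G)\}\ge\varepsilon n$ and $c=\varepsilon(H)$ works.

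I would prove $(\star)$ by induction on $|V(H)|$, peeling off a leaf at each step. The base cases are degenerate: if $H$ is edgeless or a star, then applying the finite bipartite Ramsey number to the edge/non-edge $2$-colouring of $A\times B$ shows that every sufficiently large bipartite graph contains an induced $H$ or an induced $\widetilde{H}$, so in a $(H,\widetilde{H})$-free graph one side of the bipartition is bounded, and $(\star)$ follows by a pigeonhole argument. For the inductive step, let $v$ be a leaf of $H$, say on the $A$-side of its canonical bipartition, let $p$ be its unique neighbour, and set $H':=H-v$, a strictly smaller forest. If $G$ has neither an induced $H'$ nor an induced $\widetilde{H'}$, the inductive hypothesis applied to $G$ already yields the pure pair, with $\varepsilon(H)\le\varepsilon(H')$. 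Otherwise, replacing $G$ by its bipartite complement if necessary (which keeps $G$ in the class and turns a pure pair into a pure pair), we may assume that $G$ contains an induced copy of $H'$. The heart of the argument is then a regularisation-and-extension step: assuming for contradiction that $G$ has no pure pair of the required linear size, one exhibits an induced copy of $H'$ on a vertex set $S$, with $p$ mapped to some $\tilde p\in S$, together with a linear-sized set $W\subseteq A\setminus S$ of vertices adjacent to $\tilde p$; a Ramsey/pigeonhole ``cleaning'' of $W$ --- discarding the boundedly many vertices whose adjacency to $S$, or to one another, is atypical --- produces $w\in W$ with $N(w)\cap S=\{\tilde p\}$, so that $S\cup\{w\}$ induces $H$, contradicting the $H$-freeness of $G$. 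It is precisely the degree-$1$ condition on $v$, which holds because $H$ is a forest, that makes ``$N(w)\cap S=\{\tilde p\}$'' the correct target and lets each cleaning step lose only a constant factor.

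The step I expect to be the main obstacle is exactly this last one: arranging the copies of $H'$ and their attachment neighbourhoods so that the cleaning discards only a constant proportion at each of the $O(|V(H)|)$ levels of the recursion, rather than compounding into a polynomial (or worse) loss. In the general, non-bipartite world this is impossible --- linear-sized pure pairs from a single forbidden forest would yield the Erd\H{o}s--Hajnal conjecture with exponent $1$ --- so the argument must genuinely use both the simultaneous exclusion of $H$ and $\widetilde{H}$ and the bipartiteness of the host graph, presumably leaning on the ``pure pairs'' machinery developed by Scott, Seymour and coauthors. Once $(\star)$ is established, the linearity of $R^b_X$ follows as above, completing the ``if'' direction of Conjecture~\ref{conj:forestCoForest}.
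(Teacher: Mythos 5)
The statement you are addressing is stated in the paper as a \emph{conjecture}: the paper itself proves only the ``only if'' direction (Theorem~\ref{thm:forest}) and verifies the ``if'' direction for a few concrete classes ($F_{p,q}$-free and $S_{1,2,3}$-free bipartite graphs); the full ``if'' direction is credited to the later work \cite{c4}. So there is no proof in the paper to compare yours against, and your proposal must stand on its own. Its first half does: the ``only if'' direction is indeed Theorem~\ref{thm:forest}, the reduction via Proposition~\ref{cl:RamNumHom} to a linear lower bound on $\max\{a(G),b(G)\}$ is correct, and replacing the pair $(F,\widetilde{F_0})$ by the single forest $H=F+F_0$ and its bipartite complement is legitimate (an induced copy of $H$ contains one of $F$, and an induced copy of $\widetilde{H}$ contains one of $\widetilde{F_0}$). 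Your claim $(\star)$ is exactly the main theorem of \cite{c4}.

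The gap is in your proof of $(\star)$, precisely where you flag it. The base cases are fine, but the inductive step is not an argument: you assert that, absent a linear pure pair, one can find an induced copy of $H'$ on a set $S$ whose image $\tilde p$ of $p$ has linearly many neighbours $W$ in $A\setminus S$, and that ``cleaning'' $W$ of the vertices with atypical adjacency to $S$ discards only boundedly many vertices, leaving some $w$ with $N(w)\cap S=\{\tilde p\}$. Neither assertion is justified, and the second is false as stated: \emph{every} vertex of $W$ may be adjacent to some vertex of $S\setminus\{\tilde p\}$ (since $|S|=O(1)$, a linear fraction of $W$ then attaches to the same extra vertex by pigeonhole), and nothing in the $H$-freeness of $G$ or in the absence of a pure pair rules this out; the set of ``atypical'' vertices can be all of $W$, so the cleaning can discard everything. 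Recovering from this requires re-choosing the copy of $H'$ inside a carefully controlled substructure, and arranging that each re-choice loses only a constant factor across all levels of the recursion is the entire content of the Scott--Seymour--Spirkl proof, which runs on considerably heavier machinery (blockades and iterated sparse/dense dichotomies) rather than a one-leaf-at-a-time induction. As written, your argument establishes the conjecture only modulo citing \cite{c4} for $(\star)$; the self-contained proof of $(\star)$ you sketch does not go through.
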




We note that an analogous conjecture, in the context of homogeneous submatrices, was proposed in \cite{KPT19}.  In the next section, we consider some classes of bipartite graphs excluding a forest and the bipartite complement of a forest,
and show that bipartite Ramsey numbers are linear for them. 


\subsection{Some classes with linear bipartite Ramsey numbers}


First, we look at some classes defined by a single bipartite forbidden induced subgraph $H$, which is simultaneously a forest and the bipartite complement of a forest. 
The following theorem characterizes all graphs $H$ of this form, where $F_{p,q}$ denotes the graph
represented in Figure~\ref{fig:f-co-f} and $S_{1,2,3}$ is a tree obtained from the claw by subdividing one of its edges ones and another edge twice (also shown in Figure~{\ref{fig:f-co-f}). 
Implicitly, without a proof, this characterization was given in \cite{Allen09}. It also appeared recently in \cite{ATW19}. 

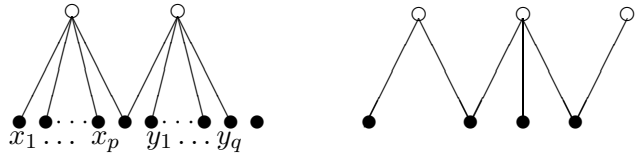
\begin{figure}[ht]
\begin{center} 
\hspace{-1cm}
\begin{picture}(100,50)
\put(0,0){\circle*{5}}
\put(10,0){\circle*{5}}
\put(30,0){\circle*{5}}
\put(40,0){\circle*{5}}
\put(20,42){\circle{5}}
\put(15,0){\circle*{1}}
\put(20,0){\circle*{1}}
\put(25,0){\circle*{1}}
\put(20,40){\line(-1,-2){20}}
\put(20,40){\line(-1,-4){10}}
\put(20,40){\line(1,-2){20}}
\put(20,40){\line(1,-4){10}}

\put(90,0){\circle*{5}}

\put(50,0){\circle*{5}}
\put(70,0){\circle*{5}}
\put(80,0){\circle*{5}}
\put(60,42){\circle{5}}
\put(55,0){\circle*{1}}
\put(60,0){\circle*{1}}
\put(65,0){\circle*{1}}
\put(60,40){\line(-1,-2){20}}
\put(60,40){\line(-1,-4){10}}
\put(60,40){\line(1,-2){20}}
\put(60,40){\line(1,-4){10}}

\put(-4,-8){$x_1\ldots\ x_p$}
\put(48,-8){$y_1 \ldots y_q$}

\end{picture} 
\hspace{1cm}
\begin{picture}(50,30)
	\setlength{\unitlength}{0.6mm}
	\put(-2,0){\circle*{3}} 
	\put(20.5,0){\circle*{3}} 
	\put(32.1,0){\circle*{3}} 
	\put(43.7,0){\circle*{3}}
	\put(55.2,24){\circle{3}} 
	
	\put(32.1,24){\circle{3}}
	
	\put(9,24){\circle{3}}
	\put(9,23){\line(-1,-2){12}} 
	\put(9,23){\line(1,-2){12}}

	\put(32,23){\line(-1,-2){12}} 
	\put(32,23){\line(0,-1){24}} 
	\put(32,23){\line(1,-2){12}} 
	
	\put(55.2,23){\line(-1, -2){12}}
\end{picture}
\end{center}
\caption{The graphs $F_{p,q}$ (left) and $S_{1,2,3}$ (right)}
\label{fig:f-co-f}
\end{figure}

\begin{theorem}\label{acyclic}
A bipartite graph $H$ is a forest and the bipartite complement of a forest if and only if $H$ is an induced subgraph of a $P_7$ or of an $S_{1,2,3}$ or of a graph $F_{p,q}$.
\end{theorem}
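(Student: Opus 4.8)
The plan is to prove both directions by translating the conditions ``$H$ is a forest'' and ``$H$ is the bipartite complement of a forest'' into a purely combinatorial constraint on the bipartite structure of $H$, and then enumerate the possibilities. The ``if'' direction is routine: since being a forest and being a co-forest are both hereditary under induced subgraphs (an induced subgraph of a forest is a forest, and bipartite complementation commutes with taking induced subgraphs on a fixed bipartition, so an induced subgraph of a co-forest is a co-forest), it suffices to check that each of $P_7$, $S_{1,2,3}$ and $F_{p,q}$ is itself simultaneously a forest and a co-forest. For $P_7$ and $S_{1,2,3}$ one just computes the bipartite complement directly and observes it is acyclic; for $F_{p,q}$ (a ``double star''-like graph) the bipartite complement is again of the same shape with the roles of parts rearranged, so acyclicity is immediate. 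This is a finite check plus one small family check.

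\textbf{The ``only if'' direction.} Suppose $H = (A, B, E)$ is a bipartite graph that is both a forest and a co-forest; I want to show $H$ embeds into $P_7$, $S_{1,2,3}$, or some $F_{p,q}$. The key observation is a local one: if a vertex $a \in A$ has two non-neighbours $b_1, b_2 \in B$, and a vertex $a' \in A$ also has $b_1, b_2$ as non-neighbours, then $\{a, a', b_1, b_2\}$ induces a $C_4$ in $\widetilde{H}$; so $\widetilde H$ being a forest forces that no two vertices of $A$ share two common non-neighbours in $B$, and symmetrically. Dually, $H$ being a forest says no two vertices of $A$ share two common \emph{neighbours} in $B$ (that would be a $C_4$ in $H$), and no $H$ contains a longer cycle. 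I would first dispose of the disconnected case: if $H$ has at least two components each containing an edge, then picking an edge $a_1b_1$ in one and an edge $a_2b_2$ in another, the vertices $a_1, b_2$ are non-adjacent and $a_2, b_1$ are non-adjacent, and in $\widetilde H$ these four vertices induce a $C_4$ — so a co-forest bipartite graph has at most one component with an edge, i.e. $H$ is a tree plus isolated vertices. By symmetry $\widetilde H$ is a tree plus isolated vertices in $H$; a vertex isolated in $\widetilde H$ is a vertex of $H$ complete to the opposite side. So the structure is: a tree $T$ on $A' \cup B'$, together with some vertices of $A$ complete to $B$ and some vertices of $B$ complete to $A$ — but having a vertex complete to $B$ on each side (or one such vertex plus the tree having an edge on the far side) creates a $C_4$ in $H$, which pins things down to very few configurations.

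\textbf{Enumeration.} With $H$ reduced to a tree $T$ (plus at most one or two ``universal-to-the-opposite-side'' vertices, each of which I'd handle as a small side case), I would bound the shape of $T$ using the co-forest condition phrased as a constraint on non-neighbourhoods. Concretely, in a tree the non-adjacency structure is rich, so $\widetilde T$ acyclic is a strong constraint: I expect it forces $T$ to have at most one vertex of degree $\geq 3$, and if it has such a vertex the three (or more) branches must be very short — leading exactly to $S_{1,2,3}$ as the maximal branching tree, to $P_7$ as the maximal path, and to $F_{p,q}$ as the maximal ``two stars joined at their centres through a common vertex'' configuration (which is what absorbs the complete-to-opposite-side vertices). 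The main obstacle is making this enumeration airtight rather than hand-wavy: one must carefully argue that a tree with a degree-$3$ vertex and branches longer than those in $S_{1,2,3}$, or with two separated high-degree vertices, always produces a $C_4$ (or longer even cycle) in the bipartite complement, and that $F_{p,q}$ is the unique way extra high-degree behaviour survives complementation. I would organize this as: (i) at most one branch vertex; (ii) if a branch vertex exists, list allowed branch-length multisets by checking $\widetilde H$ for cycles; (iii) if no branch vertex, $H$ is a path, and a path on $\geq 8$ vertices has a $C_4$ in its bipartite complement; (iv) fold in the complete-to-opposite-side vertices to recover $F_{p,q}$. Each sub-step is a short case analysis on at most four or five vertices, so no heavy computation is needed, but the bookkeeping of cases is where the real work lies.
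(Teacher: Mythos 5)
Your overall strategy (translate both conditions into ``no two same-side vertices share two common neighbours or two common non-neighbours'', then split into cases by connectivity and degree) is essentially the paper's, and your ``if'' direction is fine. But there is a concrete error in your handling of the disconnected case, and it propagates through the rest of the argument. If $a_1b_1$ and $a_2b_2$ are edges lying in different components of $H$, then in $\widetilde{H}$ the only edges among $\{a_1,a_2,b_1,b_2\}$ are $a_1b_2$ and $a_2b_1$ (the two original edges become non-edges), so these four vertices induce a $2K_2$ in $\widetilde{H}$, not a $C_4$. Consequently your conclusion that a bipartite co-forest has at most one component containing an edge is false: $2K_2$, $P_4+P_2$ and $P_3+P_3$ are all forests whose bipartite complements are forests (all three are induced subgraphs of $P_7$), yet each has two components containing an edge. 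The correct obstruction sits one level up: $3K_2$ is the bipartite complement of $C_6$, so a bipartite co-forest has at most \emph{two} components containing an edge, which is exactly what the paper uses.

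Because the reduction ``$H$ is a tree plus isolated vertices (plus vertices complete to the opposite side)'' rests on that false claim, the enumeration in your final paragraph starts from the wrong normal form and would miss the two-component configurations such as $P_4+P_2$. For comparison, the paper's case split is: if one part of $H$ has at most two vertices, the ``at most one common neighbour and at most one common non-neighbour'' constraint immediately forces $H$ to embed in some $F_{p,q}$; if both parts have at least three vertices and neither $H$ nor $\widetilde{H}$ has a vertex of degree at least $3$, then $H$ is a disjoint union of at most two paths (by the $3K_2=\widetilde{C_6}$ obstruction), and short checks ($\widetilde{P_8}$ contains a $C_4$, a component $P_5+P_2$ or $P_4+P_3$ creates a forbidden configuration) show $H$ embeds in $P_7$; and if some vertex has degree at least $3$ in $H$ or $\widetilde{H}$, a few forced adjacencies pin $H$ down to an induced subgraph of $S_{1,2,3}$. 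Your branching-tree analysis is in any case only a plan (``I expect it forces\dots'') rather than a proof, but the disconnected-case error is the substantive gap.
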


The results in \cite{ATW19} and \cite{KPT19} imply that for any natural numbers $p$ and $q$, $F_{p,q}$-free bipartite graphs have linear bipartite homogeneous subgraphs. Hence, by Proposition 2, bipartite Ramsey numbers are linear in the class of $F_{p,q}$-free bipartite graphs. In the next section, we prove that bipartite Ramsey numbers are linear in the class of $S_{1,2,3}$-free bipartite graphs. This leaves an intriguing open question of whether $P_7$-free bipartite graphs have linear Ramsey numbers or not. The structural characterization of the latter graph class from \cite{LZ17} may be helpful in answering this question.

We note that $P_6$ is symmetric with respect to bipartition and it is an induced subgraph of $S_{1,2,3}$. Therefore our result from the next section implies that colored $P_6$-free
bipartite graphs have linear bipartite homogeneous subgraphs, which resolves one of the four open cases from \cite{ATW19}.

\subsubsection{$S_{1,2,3}$-free bipartite graphs}\label{sec:S123}
We start with some definitions.
\begin{itemize}
\item The disjoint union is the operation that creates out of $G_1$ and $G_2$ the bipartite graph $G=(A_1 \cup A_2, B_1 \cup B_2, E_1 \cup E_2)$.
\item The join is the operation that creates out of $G_1$ and $G_2$ the bipartite graph which is the bipartite complement of the disjoint union
of $\widetilde{G}_1$ and $\widetilde{G}_2$
\item The skew join is the operation that creates out of $G_1$ and $G_2$ the bipartite graph $G=G_1 \oslash G_2=(A_1 \cup A_2, B_1 \cup B_2, E_1 \cup E_2 \cup 
\{ab: a \in A_1, b \in B_2\})$. We say $G$ is a skew join of $G_1$ and $G_2$, if either $G=G_1 \oslash G_2$ or $G=G_2 \oslash G_1$.  
\end{itemize}

The three operations define a decomposition scheme, known as canonical decomposition which takes a bipartite graph $G$ and partitions it into
graphs $G_1$ and $G_2$ if $G$ is a disjoint union, join, or skew-join of $G_1$ and $G_2$, and then the scheme applies to $G_1$ and $G_2$, recursively.
Graphs that cannot be decomposed into smaller graphs under this scheme will be called canonically indecomposable.

The following lemma from~\cite{skewstar} characterizes $S_{1,2,3}$-free bipartite graphs containing a $P_7$.
In the paper, the author calls a graph prime if for any two distinct vertices the neighbourhoods are also distinct.

\begin{lemma}\label{containingp7}
A prime canonically indecomposable bipartite $S_{1,2,3}$-free graph $G$ that contains a $P_7$ must be either a path or a cycle or the bipartite complement of either a path or a cycle. 
\end{lemma}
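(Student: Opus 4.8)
The plan is to analyze the structure of a prime, canonically indecomposable, $S_{1,2,3}$-free bipartite graph $G$ that contains an induced $P_7$, and show that the presence of the long path forces $G$ to be extremely rigid. First I would fix an induced $P_7$ on vertices $v_1 v_2 v_3 v_4 v_5 v_6 v_7$ and consider how an arbitrary vertex $x \notin \{v_1, \ldots, v_7\}$ attaches to it. The $S_{1,2,3}$-freeness is a strong local constraint: whenever we find a vertex of degree $3$ whose three ``arms'' can be extended along paths of lengths $1$, $2$ and $3$, we reach a contradiction. So the first step is a case analysis of the adjacency pattern of $x$ on the $P_7$, ruling out most patterns and showing that $x$ must attach in one of a few highly symmetric ways — essentially, $x$ either sees a contiguous ``interval'' of the path (as happens for vertices of a longer path $P_n \supseteq P_7$) or sees a ``co-interval'', i.e. all but a contiguous stretch (as happens in the bipartite complement). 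A separate subcase handles the possibility that the $P_7$ actually closes up, via further vertices, into an induced cycle $C_n$ or its bipartite complement.

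The second step is to propagate this local information globally. Having classified the attachment types, I would show that two vertices $x, y \notin V(P_7)$ with ``incompatible'' attachment types cannot coexist: either they would create a forbidden $S_{1,2,3}$ together with a suitable subpath, or — and this is where indecomposability enters — they would reveal that $G$ admits a disjoint union, join, or skew-join decomposition, contradicting canonical indecomposability. Primeness is used to rule out ``twin'' vertices and thus to force the attachment intervals to be genuinely nested or strictly ordered, which is exactly the combinatorial skeleton of a path or a cycle. In this way I expect to conclude that, up to bipartite complementation, every vertex of $G$ fits into a linear order consistent with the $P_7$, so that $G$ is an induced subgraph of a long path or long cycle; primeness and indecomposability then upgrade ``induced subgraph of'' to ``equal to'' a path $P_n$ or a cycle $C_n$ (with $n \geq 7$), since a proper induced subgraph of $P_n$ or $C_n$ that still contains $P_7$ is either again such a graph or becomes decomposable/non-prime.

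The main obstacle I anticipate is the bookkeeping in the first step: the number of a priori possible adjacency patterns of a single vertex on a $P_7$ is large, and $S_{1,2,3}$ contains $P_6$ as an induced subgraph, so many of the eliminations are not immediate — one often has to first deduce one or two forced non-edges before the forbidden $S_{1,2,3}$ becomes visible. Keeping track of which vertex of a candidate $S_{1,2,3}$ plays the role of the center, and which subpaths of $G$ realize the three arms of lengths $1, 2, 3$, is delicate, especially near the endpoints $v_1$ and $v_7$ where the ``arms'' available inside the $P_7$ are short and one must borrow vertices from outside. A secondary difficulty is cleanly separating the ``path-like'' regime from the ``cycle-like'' regime and the two complemented regimes, since a graph could in principle look path-like locally but close up globally; handling this requires carefully choosing, among all induced $P_7$'s in $G$, one that is maximal in an appropriate sense (e.g. contained in a longest induced path, or detecting early that no longest induced path exists and hence a cycle must be present). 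Since Lemma~\ref{containingp7} is quoted from~\cite{skewstar}, I would expect to cite that source for the full verification rather than reproduce every case here.
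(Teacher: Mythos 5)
Your instinct at the end is the right one: the paper does not prove this lemma at all. It is imported verbatim from \cite{skewstar} (``The following lemma from~\cite{skewstar} characterizes $S_{1,2,3}$-free bipartite graphs containing a $P_7$''), so the ``paper's own proof'' is a citation, and your closing sentence --- that you would cite that source rather than reproduce the case analysis --- matches exactly what the authors do. In that sense your proposal is consistent with the paper.

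As a standalone argument, however, what you have written is a plan rather than a proof, and you should be clear with yourself about where the gaps are. The first step (classifying how a vertex $x \notin V(P_7)$ attaches to the path) is stated but not executed; note also that in the bipartite setting $x$ can only be adjacent to the path vertices of the opposite colour class, so ``interval'' must mean a contiguous block among $\{v_1,v_3,v_5,v_7\}$ or among $\{v_2,v_4,v_6\}$, not among all seven vertices --- this changes the shape of the case analysis. The second step is where the real content lies and it is entirely asserted: the claim that two vertices with ``incompatible'' attachment types either create an $S_{1,2,3}$ or expose a canonical decomposition is precisely the theorem being proved, restated. One concrete point in your favour: the final upgrade from ``induced subgraph of a path or cycle'' to ``equal to a path or cycle'' does go through cleanly, since a proper induced subgraph of $P_n$ or $C_n$ is a disjoint union of paths and hence either a single path or canonically decomposable. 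If you intend to present this lemma with a proof rather than a citation, the honest options are to carry out the full case analysis (which is what \cite{skewstar} does, in the language of skew-star-free bipartite graphs) or to keep the citation, as the paper does.
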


\begin{theorem}
Let $G$ be a canonically indecomposable $S_{1,2,3}$-free bipartite graph that contains a $P_7$. If $G$ has at least $4n$ 
vertices in each part of the bipartition, then $G$ contains a $K_{n,n}$ or a $\widetilde{K}_{n,n}$
\end{theorem}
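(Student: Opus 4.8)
The plan is to reduce the statement to the case where $G$ is \emph{prime}, invoke Lemma~\ref{containingp7}, and then analyse blow-ups of paths and cycles directly. Let $G^{*}$ be the bipartite graph obtained from $G$ by contracting each class of vertices having a common neighbourhood to a single vertex (such a class lies inside one part and is independent, as $G$ is bipartite); then $G$ is recovered from $G^{*}$ by substituting an independent set for each vertex, i.e.\ $G$ is a \emph{blow-up} of $G^{*}$, and $G^{*}$ is prime. Since $P_{7}$ and $S_{1,2,3}$ are themselves prime, $G^{*}$ still contains an induced $P_{7}$, and an induced $S_{1,2,3}$ in $G^{*}$ would lift, via class representatives, to one in $G$; hence $G^{*}$ is $S_{1,2,3}$-free. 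Moreover, any decomposition of $G^{*}$ as a disjoint union, a join or a skew join lifts, by replacing each part with the union of the corresponding classes, to a decomposition of $G$ of the same type; as $G$ is canonically indecomposable, so is $G^{*}$. By Lemma~\ref{containingp7}, $G^{*}$ is therefore a path, a cycle, the bipartite complement of a path, or the bipartite complement of a cycle. Finally, since $G$ has at least $4n$ vertices in each part, the classes of $G^{*}$ in either part have total size at least $4n$.

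Next I would dispose of the two ``complementary'' possibilities. If $G^{*}=\widetilde H$ for a path or a cycle $H$, then $\widetilde G$ is precisely the blow-up of $H$ with the same classes, and $G$ contains an induced $K_{n,n}$ (resp.\ $\widetilde K_{n,n}$) if and only if $\widetilde G$ contains an induced $\widetilde K_{n,n}$ (resp.\ $K_{n,n}$). So it suffices to prove the following core claim: \emph{if $H'$ is a blow-up of a path or a cycle whose classes have total size at least $4n$ in each part, then $H'$ contains an induced $K_{n,n}$ or an induced $\widetilde K_{n,n}$.}

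For the core claim, suppose $H'$ has no induced $K_{n,n}$; then no two adjacent classes of $H'$ both have at least $n$ vertices, since such a pair would already induce $K_{n,n}$. If some class $S$ (say in part $A$) has $|S|\ge n$, then its vertex in the underlying path/cycle has at most two neighbours, so $S$ has at most two neighbouring classes, of total size less than $2n$; the remaining classes of part $B$, of total size more than $4n-2n\ge n$, are anticomplete to $S$, and $n$ vertices of $S$ together with $n$ vertices taken from those classes induce a $\widetilde K_{n,n}$. Otherwise every class has fewer than $n$ vertices. List the $A$-classes as $\alpha_{1},\alpha_{2},\dots$ and the $B$-classes as $\beta_{1},\beta_{2},\dots$ in the order of the underlying path/cycle; since the parts alternate, the $j$-th $A$-class is adjacent only to $B$-classes with neighbouring indices. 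As all partial sums of the sizes of the $\alpha_{i}$ (and of the $\beta_{i}$) increase by less than $n$ at each step while the totals are at least $4n$, one can choose an initial segment of the $A$-classes of total size in $[n,2n)$ together with a final segment of the $B$-classes of total size at least $n$ whose first index is at least two larger, or else the symmetric choice with the roles of $A$ and $B$ interchanged; in either case no class of the chosen $A$-segment is adjacent to any class of the chosen $B$-segment, so the two segments together induce a $\widetilde K_{n,n}$.

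The main obstacle is this last (``all classes small'') subcase, and specifically making the segment argument go through for a \emph{cycle}: there one must first cut the cycle at a suitable vertex before selecting the two segments, using the slack (each segment has size below $2n$ while the total mass is at least $4n$ per part) to guarantee that the chosen $A$-segment, the chosen $B$-segment, and the index gaps separating them do not wrap all the way around. It is also the place where I would verify carefully that the bound $4n$ per part is exactly what the estimates (such as $4n-2n\ge n$, with room left for segments of size up to $2n$) require, so that no weaker hypothesis would suffice with this argument.
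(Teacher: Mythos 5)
Your proposal follows essentially the same route as the paper: reduce via Lemma~\ref{containingp7} to a blow-up of a path or cycle (or its bipartite complement), dispose of the case where some class has size at least $n$, and in the all-classes-small case pair a minimal prefix of $A$-classes of mass in $[n,2n)$ with a disjoint block of $B$-classes. The one step you flag as unverified is exactly the paper's computation and goes through with the $4n$ bound uniformly for paths and cycles, with no need to cut the cycle: indexing so that $A_i$ is joined to $B_i,B_{i+1}$ (and $A_s$ to $B_s,B_1$), take the least $k$ with $|A_1\cup\dots\cup A_k|\ge n$; if $|B_{k+2}\cup\dots\cup B_s|\ge n$ this already yields a $\widetilde{K}_{n,n}$, and otherwise $|B_2\cup\dots\cup B_k|>4n-n-n-n=n$ while $|A_{k+1}\cup\dots\cup A_s|>4n-2n=2n$, and these two unions are mutually anticomplete, again giving a $\widetilde{K}_{n,n}$.
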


\begin{proof}
From Lemma~\ref{containingp7} it follows that either $G$ or its bipartite complement
must be either a path or a cycle with some vertices duplicated (as we now no longer assume that $G$ is prime).
Hence, $G=(A,B,E)$ or its bipartite complement must admit a partition $A=A_1 \cup A_2 \cup \ldots \cup A_s$, $B=B_1 \cup B_2 \cup \ldots \cup B_s$
such that: 
\begin{itemize}
\item $A_i, B_i$ are non-empty for all $i \leq s-1$, and at most one of $A_s$ and $B_s$ is empty
\item For any $i \leq s-1$, $A_i$ joined with $B_j$ if $j \in \{i, i+1\}$ and co-joined to $B_j$ otherwise 
\item $A_s$ joined to $B_s$ and $B_1$ and co-joined with $B_j$ for $j \notin \{1,s\}$  
\end{itemize}

Consider first the case when there exists some $i$ such that $|A_i| \geq n$. In this case, if $|B_i \cup B_{i+1}| \geq n$, 
we obtain a biclique $K_{n,n}$ induced by subsets of $A_i$ and $B_i \cup B_{i+1}$. 
On the other hand, if $|B_i \cup B_{i+1}| < n$, then we obtain a $\widetilde{K}_{n,n}$
induced by subsets of $A_i$ and $B \backslash (B_i \cup B_{i+1})$.
Hence, if there exists some $i$ such that $|A_i| \geq n$, then $G$ contains either a $K_{n,n}$ or a $\widetilde{K}_{n,n}$. 
The argument when there exists some $i$ such that $|B_i| \geq n$ is analogous.
 
So assume now that $|A_i|<n$ and $|B_i|<n$ for all $i$. Consider the smallest $k$ such that $|A_1 \cup A_2 \cup \ldots \cup A_k| \geq n$.
If $|B_{k+2} \cup B_{k+3} \cup \ldots \cup B_{s}| \geq n$ then we have a $\widetilde{K}_{n,n}$ induced by subsets of $A_1 \cup A_2 \cup \ldots \cup A_k$
and $B_{k+2} \cup B_{k+3} \cup \ldots \cup B_{s}$. Otherwise, $|B_2 \cup B_3 \cup \ldots \cup B_k| = |B| - |B_1|- |B_{k+1}| - |B_{k+2} \cup \ldots \cup B_s| > 4n-n-n-n=n$ and also $|A_{k+1} \cup A_{k+2} \cup \ldots \cup A_s|=|A| - |A_k| - |A_1 \cup A_2 \cup \ldots \cup A_{k-1}| > 4n - n -n = 2n$. Hence,
we obtain a $\widetilde{K}_{n,n}$ between subsets of $A_{k+1} \cup A_{k+2} \cup \ldots \cup A_{s}$ and $B_2 \cup B_3 \cup \ldots \cup B_k$.  

\end{proof}

\begin{theorem}
Let $X$ be the class of $S_{1,2,3}$-free bipartite graphs. Then $R^b_X(p,q) \leq 6(p+q)$. 
\end{theorem}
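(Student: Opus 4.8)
The plan is to bootstrap from the previous theorem (the one about canonically indecomposable $S_{1,2,3}$-free bipartite graphs containing a $P_7$) using induction on the canonical decomposition, together with the easy structural fact that $P_7$-free bipartite graphs have linear bipartite homogeneous subgraphs. First I would set up the induction: given $G \in X$ with at least $n$ vertices in each part, I want to show $G$ contains $K_{n,n}$ or $\widetilde{G}$ contains $K_{n,n}$ whenever $n \geq$ (roughly) $\frac{1}{6}(p+q)$ fails, i.e. I want a clean statement of the form ``if $G$ has at least $cn$ vertices in each part then $b(G) \geq n$ or $a(G) \geq n$'' for a suitable constant $c$, and then translate this into the Ramsey bound $R^b_X(p,q) \leq 6(p+q)$ via Proposition~\ref{cl:RamNumHom} or directly. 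The base cases are (i) $G$ is $P_7$-free, where one uses that $P_7$-free bipartite graphs — equivalently, graphs of bounded ``something'' — have bipartite homogeneous subgraphs linear in $|V(G)|$ (this is elementary: a $P_7$-free, and indeed $P_6$-free, bipartite graph has a part dominating structure), and (ii) $G$ is canonically indecomposable and contains a $P_7$, which is exactly the preceding theorem giving the factor $4$.

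For the inductive step, suppose $G$ is the disjoint union, join, or skew join of $G_1$ and $G_2$. In each case I would argue that a large homogeneous subgraph of $G$ comes from a large homogeneous subgraph of one of the $G_i$, with only a constant-factor loss. Concretely, if $G$ has at least $cn$ vertices in each part, then by pigeonhole one of $G_1, G_2$ has at least $cn/2$ vertices in at least one part; the slight annoyance is that the canonical decomposition can be very unbalanced between parts, so I would instead phrase the induction in terms of $\min(|A|,|B|)$ or, more robustly, prove the statement ``$b(G) \geq n$ or $a(G) \geq n$ whenever both parts of $G$ have at least $cn$ vertices'' and check that disjoint union, join and skew join each preserve this with $c$ unchanged. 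For disjoint union: if each part of $G$ has $\geq cn$ vertices, then some $G_i$ has $\geq cn/2 \cdot (\text{something})$… — here is where I need to be a little careful, because a component could be large in one part and tiny in the other. The cleaner route: note a co-biclique $\widetilde{K}_{n,n}$ can be taken with one side in $G_1$ and the other in $G_2$ when $G$ is a disjoint union (anticomplete across), and dually a biclique across the two factors when $G$ is a join, and for the skew join $G_1 \oslash G_2$ the bipartite graph between $A_1$ and $B_2$ is complete while between $A_2$ and $B_1$ it is empty — so either $|A_1|,|B_2|$ are both large (giving $K_{n,n}$) or one of them is small, in which case most of the mass is in a single factor and we recurse. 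This is essentially the same bookkeeping as in the proof of the preceding theorem, just carried through the recursion.

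The main obstacle I expect is the bookkeeping of constants across the recursion so that the final constant is exactly $6$ rather than something larger. The preceding theorem gives a factor $4$ for indecomposable $P_7$-containing graphs; the $P_7$-free base case should give a factor $2$ (or even $1$); and each decomposition step, handled via the ``look across the two factors, else recurse into one factor'' dichotomy, should not increase the constant at all if one always extracts the homogeneous pair across the cut when possible. So the target is $c = 4$ from the base case dominating, and then translating ``both parts $\geq cn$ implies homogeneous pair of size $n$'' into $R^b_X(p,q) \leq$ something: if $G$ has $\geq n$ vertices in each part with $n = 4\max(p,q) \leq 2(p+q)$, we are not quite at $6(p+q)$ — so presumably the intended argument absorbs an extra slack (e.g. handling the part-imbalance in the decomposition costs another factor, or the $P_7$-free case is handled with a weaker constant), landing at $6$. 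I would therefore first nail down the exact inductive invariant, prove it preserved by each of the three operations with explicit constants, invoke the two base-case theorems, and only at the end optimize the arithmetic to reach the stated bound $6(p+q)$; if the clean invariant gives $c(p+q)$ with $c \le 6$ we are done, and I would not stress about matching $6$ on the nose since the theorem only claims an upper bound.
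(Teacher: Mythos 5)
Your high-level plan (reduce to the canonically indecomposable case via the canonical decomposition) is the same one the paper follows in spirit, but two of its load-bearing steps fail. First, your base case (i) rests on the assertion that $P_7$-free bipartite graphs have linear bipartite homogeneous subgraphs, which you call elementary. It is not: the paper explicitly records the linearity of bipartite Ramsey numbers for $P_7$-free bipartite graphs as an open question. ($P_6$-free graphs are covered only as a \emph{consequence} of the $S_{1,2,3}$ theorem, since $P_6$ is an induced subgraph of $S_{1,2,3}$; $P_7$ is not, and the two classes are incomparable.) You cannot take this claim as an input.

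Second, and more centrally, the inductive step ``find a homogeneous pair across the cut, else recurse into one factor'' does not preserve any constant, and you half-notice why: the factors can be unbalanced between the two sides. Concretely, if $G=G_1\oslash G_2$ with $|A_1|,|B_1|<n$ (so $G_1$ is small in both parts), nothing of size $n$ is found across the cut and you must recurse into $G_2$, losing up to $n$ vertices from \emph{each} part; a decomposition that is a long chain of such operations, with every factor small in both parts, exhausts any fixed constant $c$ after about $c-1$ levels, while the homogeneous pair in such a ``staircase'' is spread across many factors and is never visible to a single recursive call. The paper's proof is accordingly not an induction: it peels factors one at a time (always retaining the factor with the larger $A$-side), \emph{accumulates} the peeled-off vertices instead of discarding them, uses the fact that every peeled vertex is either complete or anticomplete to the entire surviving core (true for all three operations), and stops at a carefully chosen moment --- either the core is indecomposable with both sides of size at least $4n$ (apply the preceding theorem), or the accumulated vertices on one side number at least $2n$ against a core side of size at least $n$, in which case a single pigeonhole split of the accumulated set into ``joined'' and ``co-joined'' vertices yields $K_{n,n}$ or $\widetilde{K}_{n,n}$. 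This global bookkeeping is exactly the content you defer with ``the same bookkeeping carried through the recursion,'' and it cannot be supplied by a per-step, constant-preserving argument.
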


\begin{proof}



Let $G=(A,B,E)$ be a bipartite graph in $X$ that has $6n$ vertices in each part. If $G_0'=G$ is canonically indecomposable,
then by the previous lemma we can find $K_{n,n}$ or $\widetilde{K}_{n,n}$ in $G$. So assume $G_0'$ is a disjoint a union, a join, or a skew-join of two non-empty graphs $G_1=(A_1, B_1, E_1)$ and $G_1'=(A_1', B_1', E_1')$. Without loss of generality 
we may assume that $|A_1| \leq |A_1'|$.  
Inductively, if $G_k'$ for some $k \in \mathbb{N}$ is not cannonically indecomposable, then $G_k'$ is a disjoint union, a join, or a skew join of two non-empty
graphs $G_{k+1}=(A_{k+1}, B_{k+1}, E_{k+1})$ and $G_{k+1}'=(A'_{k+1}, B'_{k+1}, E'_{k+1})$. 
Again, without loss of generality we may assume that $|A_{k+1}| \leq |A_{k+1}'|$. 

Consider first the case when the procedure stops with canonically indecomposable graph $G_k'$ such that $|A_k'| \geq 4n$. 
If $|B_k'| \geq 4n$, then by the previous lemma,
$G$ contains $K_{n,n}$ or $\widetilde{K}_{n,n}$. On the other hand, if $|B_k'|<4n$,
then we have $|B_1 \cup B_2 \cup \ldots \cup B_k| \geq 2n$ and each vertex in $B_1 \cup B_2 \cup \ldots \cup B_k$ is either joined or co-joined to 
the set $A_k'$. Hence we can find a $K_{n,n}$ or $\widetilde{K}_{n,n}$ induced by subsets of $B_1 \cup B_2 \cup \ldots \cup B_k$ and $A_k'$. 

Now consider the case when the procedure stops with a canonically indecomposable graph $G_k'$ such that $|A_k'| < 4n$. As $A=A_1 \cup A_2 \cup \ldots \cup A_k \cup A_k'$ and $|A|=6n$ it follows that $|A_1 \cup A_2 \cup \ldots \cup A_k|>2n$. Hence, in this case we can pick the smallest $p$ such that $|A_1|+|A_2|+\ldots+|A_p| \geq 2n$.  From the fact that 
$|A_1|+|A_2|+ \ldots + |A_p|+ |A_p'|=6n$ and the definition of $p$ it follows that $|A_p|+|A_p'| \geq 4n$. By construction $|A_p'| \geq |A_p|$, hence $|A_p'| \geq 2n$.  

First, let us consider the case when $|B_p'| \geq n$. Then each vertex of $A_1 \cup A_2 \cup \ldots \cup A_p$ by construction
is either joined of co-joined to $B_p'$. Since $|A_1 \cup A_2 \cup \ldots \cup A_p| \geq 2n$, it is clear that 
we will find either a $K_{n,n}$ or $\widetilde{K}_{n,n}$ in the bipartite graph induced by $A_1 \cup A_2 \cup \ldots \cup A_p$ and $B_p'$.
 
Now, let us consider the case when $|B_p'| < n$. Then each vertex of $B_1 \cup B_2 \cup \ldots B_p$ is either 
joined or co-joined to $A_p'$. Since $|A_p'| \geq 2n$ and $|B_1 \cup B_2 \cup \ldots B_p|>5n$, 
we can find either a $K_{n,n}$ or $\widetilde{K}_{n,n}$ in the bipartite graph induced by $B_1 \cup B_2 \cup \ldots \cup B_p$ and $A_p'$. 

Hence we have shown that $R_X^b(n,n) \leq 6n$ for all $n \in \mathbb{N}$. It now follows easily that for any $p, q \in \mathbb{N}$,
we have $R_X^b(p,q) \leq R_X^b\left(\max\{p,q\}, \max\{p,q\}\right) \leq 6 \cdot \max \{p,q\} \leq 6 (p+q)$.

\end{proof}

\subsubsection{Exact values of bipartite Ramsey numbers for $P_2 + P_3$-free bipartite graphs}
\label{sec:P2P3}

Finding exact values of Ramsey numbers is much harder than providing bounds. 
Similarly, finding {\it tight} bounds on the size of homogeneous subgraphs is a very difficult task.  
In \cite{ATW19}, such bounds have been given only for classes where the only  forbidden induced subgraph has two vertices in each part of the bipartition.

In this section, we consider the class of bipartite graphs where the only forbidden induced subgraph $P_2 + P_3$ has two vertices in one of the parts and three in the other.
This class is a subclass of $S_{1,2,3}$-free bipartite graphs, and hence the bipartite Ramsey numbers are linear in this class.
Now we refine this conclusion by deriving {\it exact} values of the bipartite Ramsey numbers for the class of $P_2 + P_3$-free bipartite graphs. For other results concerning this class, see \cite{triangle-free}.

Let $G=(B,W,E)$ be a $P_2+P_3$-free bipartite graph given together with a bipartition of its vertex set into a set $B$ of
black and a set $W$ of white vertices. Also, let $x$ and $y$ be two vertices of $G$ of the same color.

\begin{definition}
	A {\it private neighbour} of $x$ with respect to $y$ is a vertex of $G$ adjacent to $x$ and non-adjacent to $y$. 
	We say $x$ and $y$ are {\it incomparable} if neither $N(x)\subseteq N(y)$ nor $N(y)\subseteq N(x)$, i.e., if both 
	$x$ and $y$ have private neighbours with respect to each other.  
\end{definition}

From the $P_2+P_3$-freeness of $G$ we immediately make the following conclusion.  
\begin{claim}
	If $x$ and $y$ are incomparable, then each of $x$ and $y$ has exactly one private neighbour.
\end{claim}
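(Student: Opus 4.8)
The plan is to prove the claim directly from the $P_2+P_3$-freeness of $G$. Suppose $x$ and $y$ are incomparable vertices of the same colour, say both black. By definition, $x$ has a private neighbour $x'$ (adjacent to $x$, non-adjacent to $y$) and $y$ has a private neighbour $y'$ (adjacent to $y$, non-adjacent to $x$); both $x', y'$ are white. First I would observe that $x' \neq y'$, since $x'$ is non-adjacent to $y$ while $y'$ is adjacent to $y$. Now I would argue by contradiction: suppose $x$ had two distinct private neighbours $x_1', x_2'$ with respect to $y$. The key step is then to exhibit a forbidden $P_2+P_3$. Consider the five vertices $x_1', x_2', x, y, y'$. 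The path $x_1' - x - x_2'$ is an induced $P_3$ (the endpoints $x_1', x_2'$ are white and hence non-adjacent), and the edge $y - y'$ is a $P_2$.

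The remaining work is to check that these two pieces are anticomplete to each other and that the $P_3$ is genuinely induced, i.e., that no unwanted edges spoil the configuration. Here I would use: $x_1'$ and $x_2'$ are non-adjacent to $y$ (private neighbours of $x$ with respect to $y$); $x_1', x_2'$ are non-adjacent to $y'$ since all three are white; and $x$ is non-adjacent to $y'$ since $y'$ is a private neighbour of $y$ with respect to $x$. The only remaining potential edge is $xy$ itself, but $x$ and $y$ have the same colour, so they are non-adjacent. Hence $\{x_1', x_2', x\}$ together with $\{y, y'\}$ induce a $P_2 + P_3$, contradicting the hypothesis on $G$. Therefore $x$ has at most one private neighbour with respect to $y$; since $x$ and $y$ are incomparable, $x$ has at least one, so exactly one. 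By symmetry the same holds for $y$.

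I do not anticipate a genuine obstacle here — the statement is an immediate structural consequence of excluding $P_2+P_3$. The only point requiring a little care is making sure every required non-adjacency is accounted for when verifying that the five chosen vertices induce the forbidden graph, in particular remembering that two vertices of the same colour in a bipartite graph are automatically non-adjacent, which handles both the pair $x_1', x_2'$ (and their non-adjacency to $y'$) and the pair $x, y$. One should also note at the outset that the white vertex $y'$ used in the argument exists precisely because $y$ has a private neighbour with respect to $x$, which is exactly what incomparability guarantees.
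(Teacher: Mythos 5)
Your proof is correct, and it is exactly the verification the paper has in mind: the paper states this claim as an immediate consequence of $P_2+P_3$-freeness without writing out the details, and your five-vertex configuration $\{x_1',x,x_2'\}\cup\{y,y'\}$ with the careful check of all non-adjacencies is the standard way to make that "immediately" precise.
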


\begin{definition}
	On the set $B$ of black vertices we define a binary relation $R_B$ such that $(x,y)\in R_B$ if and only if either
	$x$ and $y$ are incomparable or $N(x)=N(y)$. Similarly, we define a binary relation $R_W$ on the set $W$ of white vertices. 
\end{definition}

\begin{lemma}\label{lem:1}
	$R_B$ and $R_W$ are equivalence relations. Moreover, any equivalence class contains either vertices with the same neighbourhood or pairwise incomparable vertices.
\end{lemma}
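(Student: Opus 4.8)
The plan is to verify the three axioms for $R_B$ (the argument for $R_W$ is symmetric) and then deduce the dichotomy on equivalence classes, in each case showing that a violation produces a forbidden induced $P_2+P_3$. Reflexivity and symmetry are immediate, since $N(x)=N(x)$ and both defining alternatives (incomparability, equal neighbourhoods) are symmetric; so the real content is transitivity. Suppose $(x,y),(y,z)\in R_B$ with $x,y,z$ pairwise distinct. If $N(x)=N(y)$ or $N(y)=N(z)$, the conclusion $(x,z)\in R_B$ follows at once (e.g.\ if $N(x)=N(y)$, then $(y,z)\in R_B$ reads as $(x,z)\in R_B$ verbatim), so the only case that needs work is: $x,y$ incomparable and $y,z$ incomparable.

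In that case I would argue by contradiction: assume $x,z$ are comparable but $N(x)\neq N(z)$, say $N(x)\subsetneq N(z)$. By the preceding Claim, $x$ has a unique private neighbour $p$ with respect to $y$ and $y$ a unique private neighbour $q$ with respect to $x$; likewise $y$ has a unique private neighbour $r$ with respect to $z$ and $z$ a unique private neighbour $s$ with respect to $y$. The point is that these collapse: since $r\sim y$ and $r\not\sim z$, the inclusion $N(x)\subseteq N(z)$ forces $r\not\sim x$, so $r=q$ and hence $q\not\sim z$; and since $p\sim x$ we get $p\sim z$ while $p\not\sim y$, so $p=s$ and hence $p\sim z$. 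Now choose $w$ with $w\sim z$, $w\not\sim x$ witnessing the strict inclusion; if $w\not\sim y$ then $w$ would be the private neighbour $s=p$, contradicting $w\not\sim x$, so $w\sim y$. One then checks that on the six vertices $x,y,z\in B$ and $p,q,w\in W$ the only edges are $xp$, $qy$, $yw$ (together with whatever holds among $x,z$ and the rest, all irrelevant), so $\{x,p\}$ together with the path $q\text{--}y\text{--}w$ induces a $P_2+P_3$ — a contradiction. The symmetric choice $N(z)\subsetneq N(x)$ is handled the same way, so $(x,z)\in R_B$.

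For the ``moreover'' statement, I would again proceed by contradiction: suppose an equivalence class $C$ contains vertices $x,y$ with $N(x)=N(y)$ and a vertex $z\in C$ with $N(z)\neq N(x)$. Then $(x,z),(y,z)\in R_B$ force $x,z$ and $y,z$ to be incomparable; let $a$ be the private neighbour of $x$ with respect to $z$ (so $a\sim x$, $a\not\sim z$, hence $a\sim y$ since $N(x)=N(y)$) and $c$ the private neighbour of $z$ with respect to $x$ (so $c\sim z$, $c\not\sim x$, hence $c\not\sim y$). Then $\{z,c\}$ together with the path $x\text{--}a\text{--}y$ induces a $P_2+P_3$. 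Hence $C$ cannot contain both an equal-neighbourhood pair and a vertex of different neighbourhood, so either all vertices of $C$ share one neighbourhood, or all their neighbourhoods are distinct; in the latter case any two members of $C$ can only be $R_B$-related through incomparability, i.e.\ $C$ consists of pairwise incomparable vertices.

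The axiom checks and the final dichotomy are routine; the only genuine difficulty is the transitivity argument, and the main obstacle there is the bookkeeping — correctly invoking the \emph{uniqueness} of private neighbours to see that the four private neighbours attached to the two incomparable pairs collapse onto the two vertices $p$ and $q$, and then locating the induced $P_2+P_3$ among the six vertices in play. In writing this out I would take care to treat the sub-case $w\not\sim y$ and the symmetric inclusion $N(z)\subsetneq N(x)$ explicitly, to be sure no configuration slips through.
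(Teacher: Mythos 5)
Your proof is correct, but the transitivity argument takes a genuinely different route from the paper's. For the case where both pairs $x,y$ and $y,z$ are incomparable, the paper argues directly: it first shows (via a $P_2+P_3$) that $z$ must be adjacent to both or to neither of the private neighbours $x_y, y_x$, then in the ``neither'' case exhibits explicit private neighbours witnessing that $x$ and $z$ are incomparable, and handles the ``both'' case by passing to the bipartite complement. You instead argue by contradiction from $N(x)\subsetneq N(z)$, and the engine of your argument is the \emph{uniqueness} of private neighbours from the preceding Claim (forcing $r=q$ and $p=s$), which the paper's transitivity argument never invokes; you also avoid the bipartite-complement reduction and the attendant check that $P_2+P_3$ is its own bipartite complement. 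Your treatment of the ``moreover'' part is likewise a clean standalone contradiction, whereas the paper extracts it somewhat tersely from the equal-neighbourhood case of transitivity; the two arguments locate essentially the same forbidden configuration. One trade-off to be aware of: the paper's constructive ``neither'' case explicitly produces the induced $3K_2$ on $x,y,z$ and their private neighbours (and its bipartite complement in the ``both'' case), a byproduct that the paper reuses later when establishing the block structure of type-2 classes (Remark~\ref{rem:matching}); your proof by contradiction does not directly supply this, so if one adopted your argument, that matching/co-matching structure would need a short separate verification.
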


\begin{proof}
	We prove the lemma for $R_B$. Reflexivity and symmetry are  obvious. To prove transitivity, consider three vertices $x,y,z\in B$
	such that $(x,y)\in R_B$ and $(y,z)\in R_B$. We want to show that $(x,z)\in R_B$. 
	If $N(x)=N(y)$ or $N(y)=N(z)$, transitivity is obvious, and we additionally get that $N(x) = N(y) = N(z)$, since otherwise 
	an induced $P_2 + P_3$ arises. This implies the second part of the statement. 
	
	So assume instead that both pairs 
	$x,y$ and $y,z$ are incomparable. 
	Let $x_y$ be a private neighbour of $x$ with respect to $y$ and 
	let $y_x$ be a private neighbour of $y$ with respect to $x$. To avoid an induced $P_2+P_3$, we conclude that 
	$z$ is adjacent either to both $x_y$ and $y_x$ or to none of them. 
	
	If $z$ is adjacent neither to $x_y$ nor to $y_x$, then $x_y$ is a private neighbour of $x$ with respect to $z$.
	Also, the private neighbour $z_y$ of $z$ with respect to $y$ is different from $x_y$ and is non-adjacent to $x$ (since otherwise 
	$x, y, z, y_x, z_y$ induce a $P_2+P_3$). Therefore, $x$ and $z$ are incomparable, which proves transitivity. We observe that 
	the vertices $x,y,z$ and $x_y,y_x,z_y$ induce a matching and hence $x_y,y_x,z_y$
	also are pairwise incomparable.
	
	
	We can reduce the case where $z$ is adjacent to both $x_y$ and $y_x$ to the previous one by passing to the (bipartite) complement, and noting that $P_2 + P_3$ and $2K_2$ are their own complements. 
	
\end{proof}

\begin{lemma}\label{lem:2}
	For any two equivalence classes $X$ and $Y$ of vertices of the same color,
	\begin{itemize}
		\item either $N(x)\subset N(y)$ for all pairs $x\in X$ and $y\in Y$,
		\item or $N(x)\supset N(y)$ for all pairs $x\in X$ and $y\in Y$.
	\end{itemize}
\end{lemma}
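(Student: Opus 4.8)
The plan is to show that the partial order induced by neighbourhood inclusion on equivalence classes of the same colour is a \emph{total} order, by ruling out the "incomparable classes" scenario. First I would fix two distinct equivalence classes $X$ and $Y$ of, say, black vertices, and pick representatives $x \in X$, $y \in Y$. Since $x$ and $y$ lie in different classes, $(x,y) \notin R_B$, which by definition means $x$ and $y$ are comparable and have different neighbourhoods; without loss of generality $N(x) \subsetneq N(y)$. The goal is then to propagate this strict inclusion to \emph{all} pairs from $X \times Y$.

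The key step is a "local-to-global" argument within each class. Take any other $x' \in X$. By Lemma~\ref{lem:1}, either all vertices of $X$ share the same neighbourhood (in which case $N(x') = N(x) \subsetneq N(y)$ and we are done for this pair), or the vertices of $X$ are pairwise incomparable, meaning $x$ and $x'$ each have a private neighbour with respect to the other. In the latter case I would use $P_2+P_3$-freeness to argue that $x'$ cannot be comparable with $y$ in the "wrong" direction: if we had $N(y) \subseteq N(x')$ or $N(x') $ and $N(y)$ incomparable, I would exhibit a concrete induced $P_2 + P_3$ using $y$, $x$, $x'$ together with a private neighbour of $x$ w.r.t.\ $x'$, a private neighbour of $y$ w.r.t.\ $x$ (which exists since $N(x) \subsetneq N(y)$), and one more suitably chosen vertex — the edge $y y_x$ (where $y_x \notin N(x)$) being disjoint from a $P_3$ on $x, x', $ and $x$'s private neighbour. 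The same kind of reasoning, run in both arguments symmetrically, handles a general pair $x' \in X$, $y' \in Y$; alternatively, once $N(x') \subsetneq N(y)$ is established for all $x' \in X$, one repeats the argument in the second coordinate, replacing $y$ by an arbitrary $y' \in Y$. Throughout, the self-complementarity of $P_2+P_3$ (and of $2K_2$) lets me reduce "wrong-direction" cases to already-handled ones by passing to the bipartite complement, exactly as in the proof of Lemma~\ref{lem:1}.

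The main obstacle I anticipate is bookkeeping the case analysis cleanly: there are several configurations for how $N(x')$ relates to $N(y)$ (contained, containing, equal, incomparable) and, when vertices are incomparable, one must correctly identify which private neighbours are available and verify that the five chosen vertices really do induce $P_2 + P_3$ (two in one part forming the $P_2$, three in the other forming the $P_3$, with no extra edges). Getting the colours and adjacencies to line up — so that the forbidden subgraph is genuinely \emph{induced} — is the delicate part; the self-complementarity trick should cut the number of genuinely distinct cases roughly in half. Once a single "bad" pair is shown to force an induced $P_2+P_3$, the conclusion that $X$ and $Y$ are comparable as classes, with a direction independent of the chosen representatives, follows immediately.
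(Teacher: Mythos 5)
Your opening move is the right one: two vertices in different equivalence classes cannot be incomparable and cannot have equal neighbourhoods (either would put them in the same class by the definition of $R_B$), so $N(x)\subsetneq N(y)$ or $N(y)\subsetneq N(x)$. But from there the proposal has a genuine gap: the entire substance of the argument is deferred to a promised construction (``I would exhibit a concrete induced $P_2+P_3$ using \dots and one more suitably chosen vertex''), with the vertices never specified and the adjacencies never verified; you yourself flag this case analysis as the unresolved obstacle. A plan that says a forbidden subgraph ``would'' be exhibited in each bad configuration is not yet a proof. Moreover, one of the cases you set up to handle --- $N(x')$ and $N(y)$ incomparable for $x'\in X$, $y\in Y$ with $X\neq Y$ --- cannot occur at all: incomparability is precisely the relation $R_B$, so incomparable vertices lie in the \emph{same} class. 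This suggests the role of the equivalence relation has not been fully absorbed.

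The paper's proof shows that no forbidden-subgraph analysis is needed at this stage: the lemma is a two-line consequence of the definition plus transitivity of strict inclusion. Fix $x\in X$, $y\in Y$ with $N(x)\subsetneq N(y)$. For any $y'\in Y$, the pair $x,y'$ is strictly comparable (different classes); if $N(y')\subsetneq N(x)$ then $N(y')\subsetneq N(y)$, so $y$ and $y'$ would be strictly comparable with distinct neighbourhoods and hence inequivalent --- contradiction. So $N(x)\subsetneq N(y')$ for all $y'\in Y$, and the same argument applied to $x'\in X$ versus $y'$ gives $N(x')\subsetneq N(y')$ for all pairs. Your heavier machinery is not only unfinished but unnecessary; in the type-2 subcase the ``wrong direction'' $N(x)\subsetneq N(y)\subsetneq N(x')$ already contradicts the existence of a private neighbour of $x$ with respect to $x'$ by pure set inclusion, before any five-vertex configuration can be assembled. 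I recommend replacing the sketch with the transitivity argument.
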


\begin{proof}
	If $x$ and $y$ belong to different equivalence classes $X$ and $Y$, then by definition $N(x)\subset N(y)$ or $N(x)\supset N(y)$.
	Assume without loss of generality that $N(x)\subset N(y)$. Then $N(x)\subset N(y')$ for any vertex $y'\in Y$, since otherwise 
	$N(y')\subset N(x)\subset N(y)$, in which case $y$ and $y'$ are not equivalent. In turn this implies that $N(x')\subset N(y')$  
	for any vertex $x'\in X$. 
\end{proof}

As shown in Lemma~\ref{lem:1}, any equivalence class contains either vertices with the same neighbourhood or pairwise incomparable vertices. We will refer to those as type 1 and type 2 classes respectively. 
Without loss of generality we will assume that any equivalence class of size 1 is of type 1. 
Moreover, we have the following:

\begin{remark} \label{rem:matching}
	For each equivalence class $X_B\subseteq B$ of type 2, 
	there is a corresponding equivalence class $X_W\subseteq W$ of the same type and of the same size such that $X_B\cup X_W$ induces 
	either a matching or a co-matching. Indeed, this follows from the proof of Lemma~\ref{lem:1}: as shown in the last two paragraphs of the proof, any 3 vertices in a class $X_B \subseteq B$ of type 2 have 3 vertices in a class $X_W \subseteq W$ with which they form either a $3K_2$ (case 1) or its bipartite complement (case 2). It is easy to see that $X_W$ only depends on $X_B$, that only one of the two cases can happen between a pair of bags, and that this implies the entire bags themselves induce a matching or a co-matching.
\end{remark}

We call $(X_B,X_W)$ a {\it block} in $G$.  

\begin{lemma}\label{lem:3}
	Let $X$ be an equivalence class and let $v$ be a vertex of the opposite color not belonging to the block containing $X$ (if $X$ is of type 2).
	Then $v$ is either complete or anti-complete to $X$.
\end{lemma}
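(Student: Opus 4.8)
The plan is to dispose of the type-1 case immediately and then handle type-2 classes by exhibiting a forbidden induced $P_2+P_3$. Assume without loss of generality that $X\subseteq B$ and $v\in W$. If $X$ is of type 1, then by Lemma~\ref{lem:1} all vertices of $X$ have the same neighbourhood, so $v$ is adjacent to all of $X$ or to none of it, and we are done. So from now on $X$ is of type 2; then $|X|\ge 2$, the class $X$ is one half of a block $(X,X_W)$, and by hypothesis $v\notin X_W$. Suppose, for contradiction, that $v$ has a neighbour $x_1\in X$ and a non-neighbour $x_2\in X$ (in particular $x_1\ne x_2$).

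By Remark~\ref{rem:matching}, $X\cup X_W$ induces either a matching or a co-matching. I would first reduce to the matching case by replacing $G$ with its bipartite complement $\widetilde G$ if necessary: this is legitimate because $P_2+P_3$ is isomorphic to its own bipartite complement, so $\widetilde G$ is again $P_2+P_3$-free, and because bipartite complementation leaves the relations $R_B,R_W$, the types of the equivalence classes, the blocks, and the condition $v\notin X_W$ unchanged, while merely interchanging the roles of $x_1$ and $x_2$ (a neighbour of $v$ becomes a non-neighbour and vice versa). The same device is already used in the proof of Lemma~\ref{lem:1}.

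Assume then that $X\cup X_W$ induces a matching. Let $a\in X_W$ be the unique neighbour of $x_1$ in $X_W$ and $b\in X_W$ the unique neighbour of $x_2$ in $X_W$; then $a\ne b$, and since $x_1$ (resp.\ $x_2$) has no other neighbour in $X_W$ we also have $a\not\sim x_2$ and $b\not\sim x_1$. I would now check that $\{v,x_1,x_2,a,b\}$ induces a $P_2+P_3$. The five vertices are pairwise distinct: $v\in W\setminus X_W$ whereas $a,b\in X_W$, and $x_1,x_2\in B$ with $x_1\ne x_2$ — this is exactly where the hypothesis $v\notin X_W$ is used. The only edges among these vertices are $vx_1$, $x_1a$ and $x_2b$: the pairs $va$, $vb$, $ab$ are non-edges since all three lie in $W$, the pair $x_1x_2$ is a non-edge since both lie in $B$, and $vx_2$, $x_2a$, $x_1b$ are non-edges by the choices above. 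Hence $a\,x_1\,v$ is an induced $P_3$ and $x_2b$ is a disjoint induced $P_2$, contradicting the $P_2+P_3$-freeness of $G$.

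I do not expect a real obstacle. The one place where care is needed is the reduction to the matching case: one has to be sure that bipartite complementation really does preserve the whole block decomposition — in particular the partner class $X_W$ and the condition $v\notin X_W$ — since a matching and a co-matching get swapped by complementation while the underlying vertex sets do not. It is also worth stressing that the assumption that $v$ lies outside the block is essential and not merely convenient: if $v\in X_W$ then $v$ is neither complete nor anti-complete to $X$, so the conclusion simply fails, and in the argument above $v$ could coincide with one of the auxiliary vertices $a,b$. Everything else is the routine verification above.
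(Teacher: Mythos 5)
Your proof is correct and follows essentially the same route as the paper's: type 1 is immediate from the definition, and for type 2 the paper likewise observes that two vertices of $X$ together with their partners in $X_W$ induce a $2K_2$, so a vertex $v$ outside the block distinguishing them would complete an induced $P_2+P_3$. The only cosmetic difference is that you reduce the co-matching case to the matching case by bipartite complementation (which is legitimate, as you note, since $P_2+P_3$ is its own bipartite complement and the equivalence classes and blocks are preserved), whereas the paper treats both cases at once.
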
 

\begin{proof}
	If $X$ is of type 1, then $v$ is either complete or anti-complete to $X$ by definition. If $X$ is of type 2, then 
	$v$ cannot distinguish two vertices of $X$, since otherwise a $P_2+P_3$ arises (remember that any two vertices of $X$ belong to an induced $2K_2$).
\end{proof}

Lemma~\ref{lem:2} allows us to order the equivalence classes $B_1,\ldots, B_k$ of $B$ so that $i<j$ implies  $N(x)\supset N(y)$ 
for all pairs $x\in B_i$ and $y\in B_j$. Similarly, we order the equivalence classes $W_1,\ldots, W_p$ of $W$ so that $i<j$ implies that  $N(x)\subset N(y)$ 
for all pairs $x\in W_i$ and $y\in W_j$. Then we order the vertices within equivalence classes arbitrarily. In this way,
we produce a linear order of $B$ and a linear order of $W$ satisfying the following properties: 
\begin{itemize}
	\item if a vertex $x\in B$ in an equivalence class of type 1 is adjacent to a vertex $y\in W$, 
	then $x$ is adjacent to all the white vertices following $y$ in the order;
	\item if a vertex $y\in W$ in an equivalence class of type 1 is adjacent to a vertex $x\in B$, 
	then $y$ is adjacent to all the black vertices preceding $x$ in the order;
	\item if $x$ is a black vertex in a block $(X_B,X_W)$, then $x$ is adjacent to all the white vertices that appear after $X_W$
	and non-adjacent to all the white vertices that appear before $X_W$;
	\item if $y$ is a white vertex in a block $(X_B,X_W)$, then $y$ is adjacent to all the black vertices that appear before $X_B$ and 
	non-adjacent to all the black vertices that appear after $X_B$.
\end{itemize}

Also, without loss of generality, we assume that in a block $(X_B,X_W)$ the order of $X_B$ is consistent with the order of $X_W$ 
(according to the bijection defined by the matching or co-matching between  $X_B$ and $X_W$).

\begin{theorem}
	For the class $A$ of $P_2+P_3$-free bipartite graphs and for all $p,q \geq 2$, $R^b_A(p,q)=\max \{p,q\}+p+q-2$.
\end{theorem}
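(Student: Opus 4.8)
The plan is to prove the upper bound and the matching lower bound separately, using the structural description of $P_2+P_3$-free bipartite graphs built up in the lemmas above. Set $m=\max\{p,q\}$, so the claimed value is $m+p+q-2$.

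For the \emph{lower bound}, I would exhibit a $P_2+P_3$-free bipartite graph $G$ with $m+p+q-3$ vertices in each part containing no induced $K_{p,p}$ and whose bipartite complement contains no induced $K_{q,q}$. The natural candidate is a ``staircase'' (a chain/difference graph): order $B=\{b_1,\dots,b_{m+p+q-3}\}$ and $W=\{w_1,\dots,w_{m+p+q-3}\}$ and join $b_i$ to $w_j$ iff $i+j$ is large enough, with the threshold tuned so that the largest induced biclique has side $p-1$ and the largest induced co-biclique has side $q-1$. Chain graphs (all vertices of one part linearly ordered by neighbourhood inclusion) are $2K_2$-free, hence certainly $P_2+P_3$-free, so membership in $A$ is immediate; the only thing to check is that the biclique/co-biclique sizes come out exactly right, which is a short counting argument once the threshold is chosen correctly (I expect the threshold to be something like: $b_iw_j\in E$ iff $i+j\ge q-1$ within a window of width roughly $p+q-2$, adjusted at the ends). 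One may need a small case analysis depending on whether $p\ge q$ or $q\ge p$, since the extremal graph is essentially the bipartite complement of itself with $p$ and $q$ swapped.

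For the \emph{upper bound}, let $G=(B,W,E)\in A$ have $n=m+p+q-2$ vertices in each part; I must find an induced $K_{p,p}$ or an induced $\widetilde{K}_{q,q}$. Here I would use the full machinery: partition $B$ into equivalence classes $B_1,\dots,B_k$ and $W$ into $W_1,\dots,W_p$ (rename to avoid the clash with the Ramsey parameter), ordered so that neighbourhoods are nested as in Lemma~\ref{lem:2}, with the block structure of type-2 classes as in Remark~\ref{rem:matching} and Lemma~\ref{lem:3}. The key point is that, once we linearly order $B$ and $W$ using the four bullet properties listed before the theorem, the graph looks almost exactly like a chain graph, except that type-2 blocks $(X_B,X_W)$ carry an internal matching or co-matching. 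A block carrying an internal matching of size $s$ contributes an independent-like obstruction of size $s$ (no two of its black vertices share a neighbour inside the block), while a co-matching block contributes a biclique-like piece; crucially, across \emph{different} blocks and type-1 classes everything is ``threshold'', so a block of matching-type behaves, for the purpose of finding a homogeneous set, like $s$ singleton vertices whose neighbourhoods are nested, i.e.\ it can be absorbed into the chain ordering as $s$ consecutive vertices. The argument then reduces to: in a chain-like bipartite graph on $n$ black and $n$ white vertices, sliding a window along the staircase yields either a biclique of side $p$ or a co-biclique of side $q$ as soon as $n\ge m+p+q-2$. Concretely, pick the smallest index $t$ such that $|B_1\cup\dots\cup B_t|\ge m$ (here using that the $B_i$'s, after absorbing matching blocks into consecutive-vertex runs, are ``small'' in the relevant sense, or handle a large class/block directly); the $\ge m$ black vertices before the threshold are jointly adjacent to the white vertices appearing after their common cutoff and jointly non-adjacent to those before it, and a pigeonhole on the $\le p+q-2$ remaining white vertices on one side forces either $p$ white vertices completing a biclique or $q$ completing a co-biclique. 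Co-matching blocks are handled by passing to the bipartite complement, which swaps $p\leftrightarrow q$ and matching $\leftrightarrow$ co-matching and preserves $P_2+P_3$-freeness (since $P_2+P_3=\widetilde{2K_2}$ is self-complementary in the bipartite sense), so it suffices to treat one type.

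The main obstacle, I expect, is the bookkeeping around type-2 blocks: I need to argue cleanly that an internal matching block of size $s$ cannot be ``wasted'' — either it is small and gets absorbed into the staircase as $s$ nested-neighbourhood vertices (so the chain argument applies verbatim), or it is itself large, in which case either its matching directly yields a large co-biclique inside $\widetilde G$ (an induced $3K_2$ generalizes to $sK_2$, whose bipartite complement is a $\widetilde{K}_{s,s}$), or... wait, a matching block of size $s$ has its two parts $X_B,X_W$ inducing $sK_2$, so $X_B\cup X_W$ in $\widetilde G$ induces $\widetilde{K}_{s,s}=K_{s,s}$ minus a perfect matching — not quite a clean biclique, so one must either take a sub-biclique or, better, observe that the block as a whole sits at a single ``level'' of the staircase and so its $s$ black vertices are anticomplete to all white vertices before $X_W$ and complete to all after, meaning they can stand in for $s$ vertices of a co-biclique drawn from outside the block. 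Making this substitution precise, and getting the constant in the window exactly $p+q-2$ rather than something slightly larger, is where the careful argument lives; the rest is the now-standard staircase pigeonhole.
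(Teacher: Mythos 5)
Your lower-bound construction fails, and the failure is structural rather than a matter of tuning the threshold. A chain graph (nested neighbourhoods, hence $2K_2$-free) with $n$ vertices in each part and no induced $K_{p,p}$ must have $d(b_p)\le p-1$, where $b_1,\dots,b_n$ are the black vertices in order of decreasing degree; but then the $n-p+1$ vertices $b_p,\dots,b_n$ all have their neighbourhoods inside $N(b_p)$, a set of size at most $p-1$, so they are jointly non-adjacent to at least $n-p+1$ white vertices, and as soon as $n\ge p+q-1$ this yields an induced $\widetilde{K}_{q,q}$. Since $\max\{p,q\}+p+q-3\ge p+q-1$ for $p,q\ge 2$, no chain graph can be the extremal example. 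The extremal graphs must exploit exactly the structure that $P_2+P_3$-freeness allows beyond $2K_2$-freeness: assuming $q=\max\{p,q\}$, take an induced matching of size $2q-1$ together with $p-2$ black vertices complete to $W$ and $p-2$ white vertices complete to $B$. A $K_{p,p}$ would need two black and two white vertices of the matching, which cannot be completely joined; and any $q$ black plus $q$ white matching vertices touch $2q>2q-1$ edges counted with multiplicity, so some matching edge lies between them and there is no $\widetilde{K}_{q,q}$. In other words the extremal graph is one large ``type~2 block'', precisely the object your sketch tries to absorb into the staircase as negligible.

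Your upper-bound plan is in the same spirit as the paper's (order both parts via the nested equivalence classes and the block correspondence), but it stops exactly where the work is. The $m$ black vertices you select need not have a ``common cutoff'': if they lie in a matching block, a white vertex matched to one of them is adjacent to that one and to none of the others, so the pigeonhole on the remaining white vertices does not go through as stated. The paper's resolution is to look at the first $p$ and last $q$ black vertices and the first $q$ and last $p$ white vertices, observe that avoiding an immediate $K_{p,p}$ or $\widetilde{K}_{q,q}$ forces a single block $T$ to meet all four of these sets, and then pin down its size from both sides: $|T|\ge 2\max\{p,q\}-1$ because $T$ is an interval in the order and must swallow everything between the witnesses, while $|T|\le 2q-1$ because a matching of size $2q$ already contains a $\widetilde{K}_{q,q}$. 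Hence $|T|=2q-1$, no black vertex lies after $T$ and no white vertex before it, and the at least $p-1$ black vertices before $T$ and $p-1$ white vertices after $T$, together with one edge of $T$, form a $K_{p,p}$. Supplying this two-sided size estimate on the straddling block is what produces the exact constant $\max\{p,q\}+p+q-2$; without it, and without a valid extremal construction, the statement is not established.
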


\begin{proof}
	To prove that $R^b_A(p,q)\ge\max \{p,q\}+p+q-2$, assume, without loss of generality, that $q=\max\{p,q\}$ (if $p=\max\{p,q\}$, the proof is similar). 
	Let $G=(B,W,E)$ be a $P_2+P_3$-free bipartite graph with $|B|=|W|=2q+p-3$ 
	such that $B=B_0\cup B_1$, $W=W_1\cup W_2$, where $|B_0|=|W_2|=p-2$, $B_1\cup W_1$ is an induced matching of size $2q-1$,
	$B_0$ is complete to $W$, while $W_2$ is complete to $B$.
	
	Assume $G$ contains a biclique $K_{p,p}$. Then this biclique contains at least 2 vertices in $B_1$ and at least two vertices 
	in $W_1$. But then $B_1\cup W_1$ is not an induced matching. This contradiction shows that $G$ is $K_{p,p}$-free.
	
	Assume $G$ contains a co-biclique $\widetilde{K}_{q,q}$. This co-biclique cannot contain vertices of $B_0$ or $W_2$ (since these vertices
	dominate the opposite part of the graph). But then we obtain a contradiction to the assumption that the size of the 
	matching $B_1\cup W_1$ is $2q-1$. Therefore, $G$ is $\widetilde{K}_{q,q}$-free. This proves the inequality 
	$R^b_A(p,q)\ge\max\{p,q\}+p+q-2$.
	
	To prove the reverse inequality, consider an arbitrary $P_2+P_3$-free bipartite graph $G=(B,W,E)$ with $|B|=|W|=\max\{p,q\}+p+q-2$.
	Without loss of generality we assume that the vertices of $G$ are ordered as described above. 
	Denote by 
	\begin{itemize}
		\item $B_1$ the set of the first $p$ vertices of $B$,
		\item $B_3$ the set of the last $q$ vertices of $B$,
		\item $B_2=B-(B_1\cup B_3)$,
		\item $W_1$ the set of the first $q$ vertices of $W$,
		\item $W_3$ the set of the last $p$ vertices of $W$,
		\item $W_2=W-(W_1\cup W_3)$.
	\end{itemize}
	
	Since $p,q \geq  2$, we have that $|B_2|=|W_2|=\max\{p,q\}-2\ge 0$, i.e.,	$B_1$ and $B_3$ are disjoint and $W_1$ and $W_3$ 
	are disjoint.
	
	If $B_1$ is complete to $W_3$ or $W_1$ is anti-complete to $B_3$, then $G$ contains $K_{p,p}$ or $\widetilde{K}_{q,q}$, respectively. 
	Therefore, we assume there is a pair $w_1\in W_1$, $b_3\in B_3$ of adjacent vertices and a pair $b_1\in B_1$, $w_3\in W_3$ of non-adjacent vertices.
	
	If the vertices $w_1$ and $b_3$ do {\it not} belong to the same block, then every black vertex that appears before $b_3$ 
	is adjacent to every white vertex that appears after $w_1$, in which case an induced $K_{p,p}$ arises. Therefore, we assume 
	that $w_1$ and $b_3$  belong to the same block. Similarly, we assume that $b_1$ and $w_3$ belong to the same block. 
	It is not difficult to see that  all four vertices belong to the same block. We denote this block by $T$.
	
	In what follows we assume that $T$ is an induced matching (the case when $T$ is a co-matching is symmetric).
	If $|T|\ge 2q$ (i.e., if $T$ contains at least $2q$ edges), then $G$ contains an induced $\widetilde{K}_{q,q}$. 
	Therefore, from now on we assume that $|T|\le 2q-1$.
	
	Since each of  $B_1$ and $B_3$ contains a vertex of $T$, we conclude that all vertices of $B_2$ belong to $T$. Hence the number of 
	black vertices of $T$ that appear before $B_3$ is at least $\max\{p,q\}-1$. Similarly, the number of white vertices of $T$ that appear after 
	$W_1$ is at least $\max\{p,q\}-1$.
	Therefore, $|T|\ge 2\max\{p,q\}-1$ (at least $\max\{p,q\}-1$ edges before $w_1b_3$, 
	at least $\max\{p,q\}-1$ edges after $w_1b_3$ plus the edge $w_1b_3$). 
	Combining $|T|\ge 2\max\{p,q\}-1\ge 2q-1$ and $|T|\le 2q-1$, 
	we conclude that $|T|=2q-1$. 
	
	If there exist at least one black vertex that appears after $T$, then this vertex together with the last $q-1$ black vertices in $T$ and the first
	$q$ white vertices in $T$ would induce a $\widetilde{K}_{q,q}$. Similarly, if there is at least one white vertex before $T$, then 
	an induced $\widetilde{K}_{q,q}$ can be easily found.
	Therefore, we assume that there exist 
	$$
	\max\{p,q\}+p+q-2-(2q-1)=\max\{p,q\}+p-q-1\ge p-1
	$$
	black vertices before $T$ and at least $p-1$ white vertices after $T$. These vertices together with the edge $w_1b_3$
	create an induced $K_{p,p}$.   
\end{proof}

\end{document}